\documentclass[11pt,a4paper]{article}
\usepackage[a4paper,margin=30mm]{geometry}
\usepackage[T1]{fontenc}
\usepackage[utf8]{inputenc}
\usepackage{mathpazo,courier,microtype}
\usepackage{amsmath,amssymb,amsthm,mathtools,mathrsfs}
\usepackage[numbers, sort&compress]{natbib}
\usepackage{enumitem,xcolor,hyperref}
\hypersetup{
	hypertexnames=false,
	colorlinks,
	citecolor=blue,
	filecolor=blue,
	linkcolor=blue,
	urlcolor=black
}
\numberwithin{equation}{section}
\newtheorem{theorem}{Theorem}[section]
\newtheorem{proposition}[theorem]{Proposition}
\newtheorem{lemma}[theorem]{Lemma}
\newtheorem{corollary}[theorem]{Corollary}

\newtheorem{remark}[theorem]{Remark}

\newcommand{\R}{\mathbb R}
\newcommand{\N}{\mathbb N}
\newcommand{\Q}{\mathbb Q}
\newcommand{\Om}{\Omega}
\newcommand{\bOm}{\overline\Omega}
\newcommand{\dd}{\,\mathrm d}
\newcommand{\Br}{\mathcal B_r}
\newcommand{\Prob}{\mathcal P}
\newcommand{\supp}{\operatorname{supp}}
\newcommand{\dist}{\operatorname{dist}}
\newcommand{\Lip}{\operatorname{Lip}}
\newcommand{\card}{\operatorname{card}}

\newcommand{\Id}{\operatorname{Id}}

\newcommand{\degLS}{\operatorname{deg}_{\mathrm{LS},E}}
\newcommand{\degB}{\operatorname{deg}_{\mathrm B}}
\newcommand{\weak}{\rightharpoonup}
\newcommand{\norm}[1]{\lVert#1\rVert}
\newcommand{\pair}[2]{\langle#1,#2\rangle}
\newcommand{\Fcal}{\mathcal F}
\newcommand{\Gcal}{\mathcal G}
\newcommand{\Mcap}{\mathcal M}

\title{\bfseries Topological degree for singular $N$-mean field equations}
\author{Mohameden Ahmedou\thanks{Supported by the Deutsche Forschungsgemeinschaft (DFG) under grant AH 156/2-1.},\and Zhongwei Tang\thanks{Supported by NSFC grants 12471099 and 12271223.},\and Heming Wang\thanks{Supported by China Postdoctoral Science Foundation 2026M793418.}}
\date{}
\begin{document}
\maketitle
\begin{abstract}
We determine the Leray--Schauder degree of the singular Dirichlet $N$-mean field equation on a smooth bounded domain in $\R^N$, $N\geq2$, for every positive nonresonant parameter and singular exponents $-1<\alpha_i\leq0$. The formula depends only on the Euler characteristic of the domain and the weights $(1+\alpha_i)^{N-1}$. We also identify the homotopy type of every sufficiently negative variational sublevel with the corresponding weighted barycenter space. 

We prove quantization of interior concentration masses and exclude boundary concentration and residual diffuse mass. The degree computation uses common descent fields for nearby parameters, bounds for their trajectories, and a finite-dimensional comparison with the Euler characteristic of negative sublevels. Annular averaging and dilation provide local contractions, which are extended to establish the sublevel homotopy equivalence. The arguments apply to the nonlinear, possibly degenerate $N$-Laplacian with singular weights, without requiring a global Palais--Smale condition or isolation and nondegeneracy of solutions.
\end{abstract}
\medskip
\noindent\textbf{2020 Mathematics Subject Classification.} Primary 35B53; Secondary 35J92, 35B44, 47H11, 58E05.

\smallskip\noindent\textbf{Keywords.} $N$-Laplacian; mean field equation; Leray--Schauder degree; Morse theory; nonpositive exponents; weighted barycenters.
\tableofcontents

\section{Introduction}
\subsection{Background and motivation}
Let $\Om\subset\R^N$, $N\geq2$, be a smooth bounded connected domain, and let $P=\{p_1,\ldots,p_m\}\subset\Om$ consist of distinct points. We consider the quasilinear mean field equation
\begin{equation}\label{maineq}
\left\{\begin{aligned}
   & -\Delta_Nu=\rho\frac{H(x)e^{u(x)}}{\int_\Om H(y)e^{u(y)}\dd y},&&x\in\Om,\\ &u=0,&&x\in\partial\Om,
\end{aligned}\right.
\end{equation}
where $\Delta_Nu=\operatorname{div}(|\nabla u|^{N-2}\nabla u)$, $\rho\geq0$ is a parameter, and $H(x)=h(x)\prod_{i=1}^m|x-p_i|^{N\alpha_i}$ with
\begin{equation}\label{eq:assumptions}
h\in C^1(\bOm),\quad \min_{x\in\bOm}h(x)>0,\quad -1<\alpha_i\leq0\quad(1\leq i\leq m).
\end{equation}
We allow $m=0$, interpret empty products as one, and set each factor
with $\alpha_i=0$ equal to one also at $p_i$. No restriction is imposed
on the topology of $\Om$. We refer to \eqref{maineq} as the singular $N$-mean field equation.

The mean field normalization makes the integral of the right-hand side of \eqref{maineq} equal to $\rho$. When $P=\varnothing$ or all exponents vanish, \eqref{maineq} is the regular $N$-mean field equation. In dimension $N=2$, mean field equations on planar domains and closed Riemannian surfaces have attracted considerable interest through their connections with conformal geometry (see, e.g., \cite{CGY93,CY87,KW74}), statistical mechanics (see, e.g., \cite{CLMP92,CLMP95,CK94,Kiessling93}), turbulent Euler flows (see, e.g., \cite{DPEM10,Stuart67}), self-dual condensates in Chern--Simons--Higgs models (see, e.g., \cite{CLW04,DPEFM15,DJLW98,EF14,LW10,LY13,NT99}), and stationary Keller--Segel systems (see, e.g., \cite{ABH26,AHW2025-1,Battaglia19}). The exponent $N$ is distinguished by the invariance of the Dirichlet energy under spatial dilations and the exponential integrability of $W^{1,N}_0(\Om)$. Equation \eqref{maineq} is therefore a natural quasilinear counterpart of the two-dimensional problem, in which concentration competes with the topology of the domain.

Set
\begin{equation}\label{eq:functional}
\begin{gathered}
E=W^{1,N}_0(\Om),\quad \norm{u}_E=\norm{\nabla u}_{L^N(\Om)},\quad Q(u)=\frac1N\int_\Om|\nabla u|^N\dd x,\\
M(u)=\int_\Om He^u\dd x,\quad B(u)=\ln  M(u),\quad J_{\rho,H}(u)=Q(u)-\rho B(u).
\end{gathered}
\end{equation}
The restriction $\alpha_i>-1$ implies $H\in L^s(\Om)$ for some $s>1$. The exponential term is consequently continuously Fr\'echet differentiable on $E$, although $H$ can be unbounded at the points of $P$.

In two dimensions, Li \cite{L1999} showed that, at each fixed nonresonant parameter, the degree of the regular surface equation is determined by the Euler characteristic and proposed computing it through its jumps at blow-up parameters. Subsequently, Chen and Lin established sharp multi-bubble estimates in \cite{CL2002} and computed explicit degree formulas for the regular equation on closed surfaces and planar Dirichlet domains in \cite{CL03}; they also extended the formulas to singular mean field equations with arbitrary exponents $\alpha_i>-1$ in  \cite{CL15}. Related degree formulas for two-dimensional mean field type equations can be found, for instance, in \cite{JWY18,AHW2025-1,ABL2017,Malchiodi26,AB2017,Malchiodi}.  In particular, Ahmedou, Ben Ayed, and Lucia
\cite{ABL2017} and Ahmedou and Ben Ayed \cite{AB2017} obtained
Leray--Schauder degree formulas at resonant parameters for the regular
Dirichlet mean field equation and its singular counterpart with positive
nonintegral singular exponents, respectively, via Bahri's theory of critical points at infinity \cite{Bahri}. For a recent survey, see \cite{Malchiodi26} and the references therein.

From a variational viewpoint, Malchiodi \cite{Malchiodi}  gave a Morse-theoretic interpretation of the regular surface degree through variational sublevels. Carlotto and Malchiodi \cite{CM}  introduced weighted barycenter spaces for singular Liouville equations on closed surfaces and related them to negative sublevels through an improved concentration inequality and concentrating test functions. Their construction includes an interpolation between regular and singular profiles as a center approaches a point of $P$. Kallel \cite{Kallel} computed the Euler characteristic of weighted barycenter spaces and its relation to the Chen--Lin generating series \cite{CL03,CL15}.

For $N\geq2$, Esposito and Morlando \cite{EM} established
mass quantization and nonresonant compactness for the Dirichlet
$N$-mean field equation with smooth positive weights, and proved
existence in nonresonant intervals provided that the corresponding
spaces of formal barycenters are noncontractible.

The regular theory raises a natural question: how do interior power singularities change the concentration thresholds and the topology responsible for existence? Our purpose is to establish the degree and the full negative-sublevel homotopy identification for the singular Dirichlet problem \eqref{maineq} in arbitrary dimension. The concentration mass at $p_i$ is proportional to $(1+\alpha_i)^{N-1}$, rather than $1+\alpha_i$ except when $N=2$. Our approach to computing the degree therefore requires both the degree--Euler comparison and the sublevel homotopy identification for the singular quasilinear functional.

Several obstacles distinguish this problem from its semilinear counterpart. For $N>2$, the energy $Q$ is not quadratic, the principal operator degenerates at zero gradients, and its inverse is nonlinear. Thus Green-function superposition and a linear compact-operator gradient representation are unavailable in this case. The range of negative exponents in \eqref{eq:assumptions} does not in general allow global $C^1$ regularity at the points of $P$. We work in $W^{1,N}_0(\Om)$, use the compact nonlinear solution map for degree, and use the monotonicity of the $N$-Laplacian for convergence and finite-dimensional comparison. Concentration asymptotics are obtained from a single nonlinear measure-data limit, rather than a sum of fundamental solutions.

\subsection{Main results}\label{subsec:main}
Throughout the paper, unless otherwise stated, $N\geq2$,
$\Om\subset\R^N$ is a smooth bounded connected domain, and
$P=\{p_1,\ldots,p_m\}\subset\Om$ consists of distinct points.
The weight $H(x)=h(x)\prod_{i=1}^m|x-p_i|^{N\alpha_i}$ satisfies \eqref{eq:assumptions}.

Write $\N=\{1,2,\ldots\}$ and $\N_0=\N\cup\{0\}$, and set
\begin{equation}\label{eq:constants}
q=\frac N{N-1},\quad \gamma_N=\frac{N^2}{N-1}=Nq,\quad S_N=|\mathbb S^{N-1}|,\quad c_N=S_N\gamma_N^{N-1},\quad \beta_i=(1+\alpha_i)^{N-1}.
\end{equation}
In particular $c_2=8\pi$. For $I\subset\{1,\ldots,m\}$ set $\beta_I=\sum_{i\in I}\beta_i$, with $\beta_\varnothing=0$, and define
\begin{equation}\label{eq:resonance}
\Sigma=c_N\{\ell+\beta_I:\ell\in\N_0,\ I\subset\{1,\ldots,m\},\ \ell+|I|>0\}.
\end{equation}
The set $\Sigma$ in \eqref{eq:resonance} is locally finite and does not contain zero. We call $\rho\geq0$ nonresonant when $\rho\notin\Sigma$. For the normalized parameter $r=\rho/c_N$, nonresonance means $c_Nr\notin\Sigma$. The set lists possible aggregate masses; it does not assert that every listed mass is realized by blow-up.

Let $E^*$ be the continuous dual of $E$, with pairing $\pair{\cdot}{\cdot}$. The weak Dirichlet $N$-Laplacian $A:E\to E^*$ is defined by $\pair{A(u)}v=\int_\Om|\nabla u|^{N-2}\nabla u\cdot\nabla v\dd x$ for $u,v\in E$. We set
\begin{equation}\label{eq:solutionmap}
K_{\rho,H}(u)=A^{-1}\Big(\rho\frac{He^u}{M(u)}\Big).
\end{equation}
Proposition~\ref{prop:functional} shows that $K_{\rho,H}$ is compact on bounded subsets of $E$, while Theorem~\ref{thm:compact} bounds its fixed points at nonresonant parameters. Consequently, the degree
\begin{equation}\label{eq:degreedef}
d_\rho(\Om,H)=\degLS(\Id-K_{\rho,H},B_R^E,0)
\end{equation}
is independent of sufficiently large $R$, where $B_R^E=\{u\in E:\norm{u}_E<R\}$. Here $\chi(\Om)$ denotes the ordinary Euler characteristic. For $z\in\mathbb Z$, $k\in\N_0$, we use $\binom zk=z(z-1)\cdots(z-k+1)/k!$, with $\binom z0=1$.

We are now in a position to state our main result, which gives
an explicit formula for the Leray--Schauder degree of
\eqref{maineq} at nonresonant parameters.
\begin{theorem}\label{thm:degree}
Assume \eqref{eq:assumptions}. If $\rho>0$ is nonresonant, set $r=\rho/c_N$ and $k_I=\lfloor r-\beta_I\rfloor$ whenever $\beta_I<r$. Then
\begin{equation}\label{eq:degree}
d_\rho(\Om,H)=\sum_{\substack{I\subset\{1,\ldots,m\}\\\beta_I<r}}(-1)^{|I|}\binom{k_I-\chi(\Om)+m}{k_I}.
\end{equation}
Also $d_0(\Om,H)=1$. The degree is independent of the positive factor $h$ and of the positions of the distinct interior points in $P$. No isolation or nondegeneracy assumption is made on the solution set.
\end{theorem}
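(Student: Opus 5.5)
The plan follows the Malchiodi--Carlotto--Malchiodi route: first show that the degree is locally constant in nonresonant intervals, then compute it through the topology of low sublevels. Theorem~\ref{thm:compact} gives a priori bounds for fixed points of $K_{\rho,H}$ uniformly on compact subsets of $(0,\infty)\setminus\Sigma$. Homotopy invariance of the Leray--Schauder degree then makes $d_\rho(\Om,H)$ constant on each component of $[0,\infty)\setminus\Sigma$.

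The same bounds make the degree independent of $h$ and of the positions of the points of $P$. Indeed, $h$ can be deformed through positive $C^1$ functions, and the points moved through distinct interior configurations. Along such deformations the resonant set $\Sigma$ stays fixed, because it depends only on the $\alpha_i$ and on $m$. Quantization of interior masses, together with the exclusion of boundary and diffuse mass, keeps the bounds uniform. For $\rho=0$ the map $K_{0,H}$ is identically zero, so $d_0=1$; this also gives the answer $1$ for $0<r<\min(1,\min_i\beta_i)$, which agrees with the formula.

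For a general nonresonant $\rho$, the plan is a degree--Euler comparison. The goal is
\[
d_\rho(\Om,H)=\chi\big(\{J_{\rho,H}\le -L\}\big)\quad\text{for } L \text{ large}.
\]
The reason is that $\{J_{\rho,H}\le L\}$ is contractible for $L$ large, by a deformation along a descent flow whose trajectories remain bounded. To obtain this, I would build a single descent (pseudo-gradient) field, valid simultaneously for all parameters near $\rho$, tangent to the compact perturbation structure $\Id-K$. I would then approximate in finite dimensions via Galerkin projections and use the monotonicity of $A$. This identifies the Leray--Schauder degree with the sum of local Morse-type indices, that is, with $\chi(\{J\le L\},\{J\le -L\})$. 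No global Palais--Smale condition is needed, since the flow only has to be controlled on bounded trajectories away from critical points. Nondegeneracy is also not needed, because the comparison uses a nearby parameter and Euler characteristics of pairs rather than counting solutions.

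Next, I would identify $\{J_{\rho,H}\le -L\}$ up to homotopy with the weighted barycenter space
\[
\mathcal B=\Big\{\textstyle\sum_j t_j\delta_{x_j}+\sum_{i\in I} s_i\delta_{p_i}\ :\ \#\{j\}+\beta_I<r\Big\},
\]
with $x_j\in\Om$, using Carlotto--Malchiodi interpolation near the points $p_i$. The map from the sublevel to $\mathcal B$ comes from an improved Moser--Trudinger inequality for the $N$-Laplacian: low energy forces $He^u/M(u)$ to concentrate near fewer than $r$ weighted points. The map in the other direction uses concentrating bubbles whose local masses are $c_N$ at regular points and $c_N\beta_i$ at $p_i$. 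Showing that the composition is homotopic to the identity uses the annular averaging and dilation contractions. Finally, I would compute $\chi(\mathcal B)$ combinatorially by stratifying according to which singular points carry mass, following Kallel. This gives inclusion--exclusion over $I$ with sign $(-1)^{|I|}$. For each $I$ the remaining stratum contributes the Euler characteristic of the $k_I$-th barycenter space of $\Om\setminus(\text{$m$ points})$. That space has $\chi=\chi(\Om)-m$, and the classical formula $\chi(\Om_k)=\binom{k-\chi}{k}$ then yields \eqref{eq:degree}.

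The main obstacle is the homotopy equivalence of the sublevel with $\mathcal B$ in the quasilinear setting. The improved inequality and the projection onto barycenters must be uniform near the singular points, where regular and singular profiles interact. Here I would first adapt the covering argument for $N$-Laplace cut-offs, using $\beta_i=(1+\alpha_i)^{N-1}$ as the local Moser--Trudinger constant.
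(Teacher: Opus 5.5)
Your overall architecture matches the paper's: compactness, a degree--Euler identity via common descent fields for nearby parameters and Galerkin reduction, the identification $J_\rho^{-L}\simeq\Br(K)$, and a stratified Euler-characteristic count. However, two steps fail as you state them.

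\textbf{The final count.} The correct normalizations are $d_\rho=\chi(J_\rho^b,J_\rho^a)=1-\chi(J_\rho^{-L})$ and, for a compact polyhedron $X$, $1-\chi(X_k)=\binom{k-\chi(X)}{k}$. Your displayed goal $d_\rho=\chi(J_\rho^{-L})$ and your formula $\chi(\Om_k)=\binom{k-\chi}{k}$ both drop the $1-$. They would give $d_\rho=0$ for $r<\min\{1,\beta_1,\ldots,\beta_m\}$, where the sublevel is empty but $d_\rho=1$.

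More seriously, inclusion--exclusion over the strata indexed by $I$ is legitimate only for the compactly supported Euler characteristic $\chi_c$. The strata $(\operatorname{Conf}_\ell(X)\times\operatorname{int}\Delta^{\ell+|I|-1})/S_\ell$, with $X=K\setminus P$, are noncompact, and ordinary $\chi$ is not additive over them. The sign $(-1)^{|I|}$ comes precisely from $\chi_c(\operatorname{int}\Delta^j)=(-1)^j$. To get back to the ordinary $\chi$ of the sublevel you need a compact model. Your $\mathcal B$, with $x_j\in\Om$, is not one: for $m=0$, $N=3$, $1<r<2$ it is homeomorphic to $\Om$, and $\chi_c(\Om)=-\chi(\Om)$.

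The paper uses a compact polyhedral core $K$ whose collar retraction fixes a neighborhood of $P$ and sends no ordinary point into $P$. Then $\Br(K)$ is compact (using $\beta_i\le1$), $\chi(\Br(K))=\chi_c(\Br(K))$, and the input is $\chi_c(K\setminus P)=\chi(K)-m$ (Proposition~\ref{prop:Euler}). Your justification that $\Om\setminus(m\text{ points})$ has $\chi=\chi(\Om)-m$ is false for odd $N$, where $\chi(\Om\setminus P)=\chi(\Om)+m$. For example, take $N=3$, $\Om$ a ball, $m=1$, $1<r<1+\beta_1$. Then $\Br(K)\cong K$ and $d_\rho=0$, whereas your recipe with the ordinary value $2$ gives $\binom{1-2}{1}-\binom{-2}{0}=-2$.

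\textbf{The degree--Euler step.} Galerkin approximation (Lemma~\ref{lem:Galerkin}) compares degrees on a bounded open set. It says nothing about the topology of the infinite-dimensional pair $(J_\rho^b,J_\rho^a)$. Moreover, the strip $\{a\le J_\rho\le b\}$ is unbounded, so neither the Galerkin comparison nor a Morse count applies to it directly. The missing steps are these:
\begin{itemize}
\item Cut down with the nearby-parameter constraint $\Mcap=\{J_{\rho-\delta}\le C\}$, on which $Q=\frac{\rho}{\delta}J_{\rho-\delta}-(\frac{\rho}{\delta}-1)J_\rho$ bounds the strip.
\item Show by common descent that this restriction does not change relative homology (Lemma~\ref{lem:localize}).
\item Transfer the constrained relative homology to $E_n$ (Lemma~\ref{lem:penalty}). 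This adds the same penalty $tL\dist(\cdot,E_n)$ to $J_\rho$ and $J_{\rho-\delta}$ and runs a quotient continuation.
\item Apply a generic finite-dimensional Morse perturbation with an inward cap condition, which identifies the relative Euler characteristic with the Brouwer degree.
\end{itemize}
It is this perturbation, not the nearby parameter, that removes the need for nondegeneracy. Without these steps, your claim that the Leray--Schauder degree equals $\chi(J^L,J^{-L})$ is asserted rather than proved.
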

\begin{remark}
    When $m=0$, \eqref{eq:degree} is $\binom{k-\chi(\Om)}k$ with $k=\lfloor\rho/c_N\rfloor$. For $N=2$ and a domain with $g$ holes, $\chi(\Om)=1-g$ and
\[
d_\rho=\sum_{\substack{I\subset\{1,\ldots,m\}\\\beta_I<\rho/(8\pi)}}(-1)^{|I|}
\binom{\lfloor\rho/(8\pi)-\beta_I\rfloor+g+m-1}{\lfloor\rho/(8\pi)-\beta_I\rfloor},\quad  \beta_i=1+\alpha_i.
\]
For $N=2$, \eqref{eq:degree} recovers the singular Dirichlet degree formula of Chen--Lin \cite[Theorem 1.4]{CL15}. To compare the formulas, distinguish the generating series from the sum of its coefficients below the parameter. In dimension two set
\[
G_\Om(t)=(1-t)^{\chi(\Om)-m}\prod_{i=1}^m(1-t^{1+\alpha_i})=\sum_{\lambda\geq0}a_\lambda t^\lambda.
\]
After equal exponents are combined, $d_\rho=\sum_{0\leq\lambda<\rho/(8\pi)}a_\lambda$. The same coefficient-sum convention is used in \cite[Theorems 1.1 and 1.4]{CL15}; see also \cite[Corollary 1.8]{Kallel}. No extra factor $(1-t)^{-1}$ precedes the sum $\sum_{0\leq\lambda<\rho/(8\pi)}a_\lambda$. In the common range $-1<\alpha_i\leq0$ the algebraic dependence on the weights and Euler characteristic agrees.
\end{remark}
The existence property of the Leray--Schauder degree gives the following consequence.
\begin{corollary}\label{cor:existence}
Under the assumptions of Theorem~\ref{thm:degree}, if $d_\rho(\Om,H)\ne0$, then \eqref{maineq} has a weak solution $u\in E\cap C^0(\bOm)$. On every compact subset of $\bOm\setminus P$, the solution belongs to $C^{1,\eta}$ for some $\eta>0$ depending on that subset.
\end{corollary}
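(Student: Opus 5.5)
The plan is to combine the solution property of the Leray--Schauder degree with the regularity theory for the $N$-Laplacian with $L^s$ data, $s>1$. Since $d_\rho(\Om,H)\neq0$, the existence property of $\degLS$ gives a fixed point $u\in B_R^E$ of $K_{\rho,H}$, that is, $u=K_{\rho,H}(u)$. This uses only that $K_{\rho,H}$ is compact on bounded sets (Proposition~\ref{prop:functional}) and that no fixed points lie on $\partial B_R^E$ for large $R$ (Theorem~\ref{thm:compact}). Unwinding \eqref{eq:solutionmap}, $A(u)=\rho He^u/M(u)$ in $E^*$, so $u\in W^{1,N}_0(\Om)$ is a weak solution of \eqref{maineq}.

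For regularity, set $f=\rho He^u/M(u)$. By \eqref{eq:assumptions}, $H\in L^s(\Om)$ for some $s>1$. By the Moser--Trudinger inequality, $e^{u}\in L^t(\Om)$ for every $t<\infty$. Hölder's inequality then gives $f\in L^{s'}(\Om)$ for every $1<s'<s$. Since $s'>1=N/N$, standard $L^\infty$ estimates for $-\Delta_N u=f$ (Serrin's or Stampacchia--De Giorgi iteration, or comparison with the radial $N$-Laplacian) yield $u\in L^\infty(\Om)$. Local Hölder continuity (Serrin, Trudinger) together with boundary Hölder regularity on the smooth domain with zero Dirichlet data then gives $u\in C^0(\bOm)$.

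Now fix a compact set $K\subset\bOm\setminus P$. Choose a neighborhood $U$ of $K$ in $\bOm$ whose closure avoids $P$. On $U$ the weight $H$ is bounded, because $h$ is bounded and the singular factors $|x-p_i|^{N\alpha_i}$ are bounded away from $p_i$. Since $u\in L^\infty$, $f$ is bounded on $U$. The $C^{1,\eta}$ estimates of DiBenedetto and Tolksdorf in the interior, and of Lieberman up to the smooth boundary portion $\partial\Om\cap\bar U$ with zero boundary data, give $u\in C^{1,\eta}$ near $K$. The exponent $\eta$ depends on $N$, on $\|f\|_{L^\infty(U)}$, on $\|u\|_{L^\infty}$ and on $\dist(K,P)$, hence on $K$. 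The whole argument is routine. The only point requiring care is the global $L^\infty$ bound, which is needed because $f$ is merely $L^{s'}$ near $P$. It follows from the $L^{s'}$ integrability with $s'>1$ established above.
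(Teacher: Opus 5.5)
Your proposal is correct and follows the paper's argument. The degree is admissible on a large ball by Theorem~\ref{thm:compact} and the compactness in Proposition~\ref{prop:functional}, so the existence property gives a fixed point of $K_{\rho,H}$, which is a weak solution, and the regularity rests on $He^u/M(u)\in L^t$ for some $t>1$. The only variation is in the continuity step: the paper truncates the right-hand side and combines its stability estimate \eqref{eq:supstability} with the $C^{1,\eta}$ bounds of \cite{EM} for bounded data, whereas you quote the classical De Giorgi/Serrin--Trudinger $L^\infty$ and H\"older estimates for $L^{s}$ data with $s>1$, which is equally valid.
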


The regularity in Corollary~\ref{cor:existence} follows from Proposition~\ref{prop:functional}. For $N=2$, local Schauder estimates give $u\in C^2(\Om\setminus P)$, so the equation holds classically there; if all exponents vanish, this holds throughout $\Om$. A global $C^2$ assertion is false in the general setting. Even with $H\equiv1$ and $N>2$, a $C^2$ solution at $\rho>0$ would attain a positive maximum in the interior, where $\nabla u=0$ and $\Delta_Nu=0$, contradicting the strictly positive right-hand side of \eqref{maineq}. Moreover, for $\rho>0$, a solution cannot be $C^1$ near a point $p_i$ with $\alpha_i\leq-1/N$, as proved in \eqref{eq:C1obstruction} below.

A point with $\beta_i=1$ can be deleted without changing the equation, resonance set, or formula; the algebraic cancellation is proved in Proposition~\ref{prop:zero}. Thus the regular case is included without a limiting argument.

To describe the topology underlying \eqref{eq:degree}, choose a compact finite Euclidean polyhedral core $K\Subset\Om$, containing $P$, and a collar deformation retraction of $\bOm$ onto $K$ fixing a neighborhood of $P$. The construction is given in Section~\ref{subsec:barycenters}. For compact $P\subset Y\subset\bOm$, let $\Prob(Y)$ be the probability measures on $Y$ with the weak topology, and let $\delta_y$ denote the unit Dirac measure at $y$. Give ordinary support points weight one and $p_i$ weight $\beta_i$. Thus, for a finitely supported probability measure $\sigma$,
\[
\card_\beta(\sigma)=\#(\supp\sigma\setminus P)+\sum_{\substack{1\leq i\leq m\\p_i\in\supp\sigma}}\beta_i.
\]
The associated weighted barycenter space is
\begin{equation}\label{eq:barycenters}
\Br(Y)=\{\sigma\in\Prob(Y):\supp\sigma\text{ is finite},\ \card_\beta(\sigma)<r\}.
\end{equation}
Only support points with positive coefficients are counted, and $\simeq$ denotes homotopy equivalence. When $H$ and $\Om$ are fixed, write $J_\rho=J_{\rho,H}$ and $d_\rho=d_\rho(\Om,H)$.

\begin{theorem}\label{thm:homotopy}
Assume \eqref{eq:assumptions}. If $\rho>0$ is nonresonant, then for all sufficiently large $L$,
\begin{equation}\label{eq:mainhomotopy}
J_{\rho,H}^{-L}:=\{u\in E:J_{\rho,H}(u)\leq-L\}\simeq\Br(K),\quad 
d_\rho=1-\chi(J_{\rho,H}^{-L})=1-\chi(\Br(K)).
\end{equation}
If $r<\min\{1,\beta_1,\ldots,\beta_m\}$, both spaces are empty and have Euler characteristic zero.
\end{theorem}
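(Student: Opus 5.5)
The plan has two halves: first the homotopy equivalence $J_{\rho,H}^{-L}\simeq\Br(K)$, then the degree–Euler identity $d_\rho=1-\chi(J_{\rho,H}^{-L})$. Together they give \eqref{eq:mainhomotopy}.

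\textbf{Homotopy equivalence.} I would follow the Djadli--Malchiodi / Carlotto--Malchiodi scheme, adapted to $W^{1,N}_0(\Om)$. Two maps are needed.
\begin{itemize}
\item \emph{Test functions} $\Phi_\lambda:\Br(K)\to J^{-L}_{\rho,H}$. For $\sigma=\sum t_j\delta_{x_j}$, set $\Phi_\lambda(\sigma)=\ln\sum_j t_j\,\varphi_{\lambda,x_j}$. Here $\varphi_{\lambda,x}$ is the $N$-bubble $\big(\lambda^{q}/(1+\lambda^{q}|x-y|^{q}\cdot\mathrm{const})\big)^{N}$, modified by the exponent $(1+\alpha_i)$ near $p_i$, cut off so that the Dirichlet condition holds. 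A Moser--Trudinger-type energy estimate gives
\[
J_\rho(\Phi_\lambda(\sigma))\le \big(c_N\card_\beta(\sigma)-\rho+o(1)\big)\,\gamma_N^{-1}\ln\lambda\cdot\mathrm{const},
\]
which tends to $-\infty$ uniformly as $\lambda\to\infty$, because $c_N\card_\beta<\rho$. The interpolation between regular and singular profiles as $x_j\to p_i$ is the delicate part of this map. Compactness of the strata in the weak topology, plus the strict inequality $\card_\beta<r$ together with nonresonance, gives a uniform margin.
\item \emph{Projection} $\Psi:J^{-L}_{\rho,H}\to\Br(K)$. This rests on an improved weighted Moser--Trudinger inequality: if $He^u/M(u)$ is spread over sets of total weighted cardinality at least $r$, then $J_\rho(u)\ge -C$. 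Consequently, on very negative sublevels the normalized measure is weakly close to $\Br(\bOm)$. One then composes a continuous retraction of a weak neighborhood of $\Br(\bOm)$ onto it with the collar retraction $\bOm\to K$, which fixes $P$.
\end{itemize}
Finally, $\Psi\circ\Phi_\lambda\simeq\Id$ follows because the measures $He^{\Phi_\lambda(\sigma)}/M$ converge weakly to $\sigma$ as $\lambda\to\infty$.

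For $\Phi_\lambda\circ\Psi\simeq\Id$ on $J^{-L}$, I would use the annular averaging and dilation contractions announced in the abstract. Concretely:
\begin{itemize}
\item near each concentration point, deform $u$ through radial averaging on annuli, which does not increase $Q$ by Jensen/convexity;
\item then dilate, staying inside $J^{-L'}$;
\item then interpolate to the bubble profile.
\end{itemize}
Passing between sublevels uses the independence of $J^{-L}$ on large $L$. This follows from a deformation along the common descent fields that the paper constructs, with trajectory bounds replacing the Palais--Smale condition. Those trajectory bounds in turn rest on the compactness of Theorem~\ref{thm:compact} and the exclusion of boundary and diffuse blow-up.

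\textbf{Degree–Euler identity.} The strategy is to compare the Leray--Schauder degree with the sublevel topology through a finite-dimensional reduction, in four steps.
\begin{enumerate}
\item Since $J^{a}$ for large $a$ contains $\overline{B_R^E}$ and is contractible, the degree on $B_R^E$ equals the Euler characteristic of the pair $(J^{a},J^{-L})$. This gives $1-\chi(J^{-L})$, once one justifies the identity $\deg=\sum(-1)^{\text{index}}$ in a generalized form.
\item To avoid nondegeneracy assumptions, perturb $\rho$ to nearby nonresonant values, which leaves the degree constant by Theorem~\ref{thm:compact}.
\item Approximate $K_{\rho,H}$ by finite-dimensional maps (Galerkin, using monotonicity of $A$), and build a pseudo-gradient homotopic to $\Id-K$ on the relevant region.
\item Apply the Poincaré–Hopf theorem on the finite-dimensional sublevel pair.
\end{enumerate}

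\textbf{Computing $\chi(\Br(K))$.} Stratify $\Br(K)$ by the set $I$ of singular points in the support together with $k\le k_I$ ordinary points. The Kallel-type count of open cells then gives
\[
1-\chi=\sum_{\beta_I<r}(-1)^{|I|}\binom{k_I-\chi(\Om)+m}{k_I},
\]
matching \eqref{eq:degree}; here $\chi(K)=\chi(\Om)$. If $r<\min\{1,\beta_i\}$, no nonzero measure qualifies, so $\Br(K)$ is empty. Correspondingly $J_\rho$ is bounded below by Moser–Trudinger, so $J^{-L}$ is empty too.

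\textbf{Main obstacle.} I expect the hardest step to be $\Phi_\lambda\circ\Psi\simeq\Id$ within negative sublevels for the nonquadratic $Q$. Annular averaging must not raise the energy beyond $o(\ln\lambda)$ errors uniformly near the singular points. This is where I would first try the convexity of $|\nabla u|^N$ combined with scaling invariance.
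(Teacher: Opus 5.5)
\textbf{Main gap: the reverse comparison.} Your overall plan matches the paper: a test map $\Phi_\lambda$, a projection $\Psi$ via an improved Moser--Trudinger inequality and a retraction onto barycenters, $\Psi\circ\Phi_\lambda\simeq\Id$, and a degree--Euler identity. The step that actually carries the homotopy equivalence, $\Phi_\lambda\circ\Psi\simeq\Id$ on $J^{-L}$, is not supported by what you describe. There are three problems.
\begin{enumerate}
\item \emph{No continuous choice of balls.} Averaging on annuli ``near each concentration point'', dilating and interpolating presupposes a continuous choice, over all of $J^{-L}$, of centers, scales and annuli. The number and positions of the relevant balls must jump wherever $\Psi(u)$ crosses strata of $\Br(K)$ (collisions, vanishing coefficients, ordinary points reaching $P$). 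You give no mechanism for this.
\item \emph{Unbudgeted energy costs.} Gluing the spherical average to the untouched parts needs a cutoff, so one only gets $Q(\mathcal A_tu)\le(1+\varepsilon)Q(u)$, not $Q(\mathcal A_tu)\le Q(u)$. Jensen also lowers the mass on averaged spheres. Hence $J$ can rise by $\varepsilon Q(u)-\rho\ln\vartheta$ (Lemma~\ref{lem:radialize}). Since $Q$ is unbounded on $J^{-L}$, this loss is not absorbed by passing to a slightly higher sublevel unless you first create the margin $(\vartheta-1)Q$ of Proposition~\ref{prop:margin}.
\item \emph{Wrong endpoint.} Your deformation would have to end exactly at $\Phi_\lambda(\Psi(u))$, matching positions and barycentric coefficients. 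Nothing in your local steps arranges that.
\end{enumerate}

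\textbf{How the paper handles it.} The paper never constructs $\Phi\circ\Psi\simeq\Id$; it notes, following Carlotto--Malchiodi, that the one-sided comparison alone does not suffice. Instead, Theorem~\ref{thm:reverse} shows that $\Psi$ is a weak homotopy equivalence.
\begin{itemize}
\item Surjectivity on $\pi_n$ comes from $\Psi\Phi\simeq\Id$.
\item Injectivity comes from a relative extension property: if $f:S^n\to J^{-L}$ and $\Psi f$ extends over $D^{n+1}$, then $f$ extends.
\item The extension is built simplex by simplex. It uses a simplicial approximation into a fine triangulation of $\Br(K)$ (Lemma~\ref{lem:polyhedron}), and nested ball carriers indexed by target simplex and dimension (Lemma~\ref{lem:carriers}). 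It also uses local contractions that end at a \emph{fixed} profile, coned over each simplex (Lemma~\ref{lem:contract}).
\item The energy margins $a_0>a_1>\cdots$, depths $L_d$ and mass tolerances $\eta_d$ are all fixed before subdivision.
\item Finally, $J^{-L}$ is an ANR, hence of CW type, and Whitehead's theorem upgrades the weak equivalence to a homotopy equivalence.
\end{itemize}
This local-to-global device is the idea your proposal is missing.

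\textbf{Gap in the degree--Euler half.} Moving $\rho$ within its nonresonant interval keeps the degree fixed, but it does not remove degenerate or nonisolated critical points. For $N>2$ the functional is not $C^2$, so no transversality argument in $E$ is available. More importantly, Poincar\'e--Hopf on a Galerkin section computes a finite-dimensional Euler characteristic. You still have to show that it equals $\chi(J^b,J^a)$ for unbounded infinite-dimensional sublevels, without a Palais--Smale condition. The paper does this in three steps:
\begin{itemize}
\item It localizes to the bounded set $\Mcap\cap\{a\le J\le b\}$, where $\Mcap=\{J_{\rho-\delta}\le C\}$, using common descent fields and \eqref{eq:normidentity} (Lemma~\ref{lem:localize}).
\item It proves the reduction \eqref{eq:homologyreduction} by adding the same penalty $tL\dist(u,E_n)$ to both functionals and continuing the relative quotients (Lemma~\ref{lem:penalty}).
\item Only then does it make a generic Morse perturbation inside $E_n$, with the degree sign fixed by Lemma~\ref{lem:Galerkin}.
\end{itemize}
Your step 1 names the right target $1-\chi(J^{-L})$, but these are the steps that make it true.
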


The vanishing of \eqref{eq:degree} does not imply nonexistence. A nonempty, noncontractible $\Br(K)$ also implies the existence of a solution: if $J_\rho$ had no critical point, Propositions~\ref{prop:strip} and \ref{prop:high} of Section~\ref{sec:deformation} would identify a sufficiently negative sublevel with a contractible high sublevel, contrary to \eqref{eq:mainhomotopy}.

Formula \eqref{eq:degree} simplifies when there is only one point in $P$, or when all weights are equal. 
\begin{corollary}\label{cor:onemark}
Assume \eqref{eq:assumptions}, and suppose $m=1$ and $\beta_1=\beta\in(0,1]$. Write $r=k+\varepsilon$, $k\in\N_0$, $\varepsilon\in (0,1)$. At a nonresonant parameter,
\[
d_\rho=\left\{\begin{aligned}
    &\binom{k-\chi(\Om)}k&&\text{ if }\, \varepsilon\in (0,\beta),\\&0&&\text{ if }\,\varepsilon\in (\beta,1).
\end{aligned}\right.
\]
\end{corollary}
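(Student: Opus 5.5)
This is a direct specialization of \eqref{eq:degree} in Theorem~\ref{thm:degree} with $m=1$. The only subsets of $\{1\}$ are $I=\varnothing$, with $\beta_\varnothing=0$, and $I=\{1\}$, with $\beta_I=\beta$. So the sum has at most two terms, and the plan is to evaluate each term and compare the floors.

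First the term $I=\varnothing$. Since $r>0$ it is always present, with $k_\varnothing=\lfloor r\rfloor=k$. It contributes $\binom{k-\chi(\Om)+1}{k}$.

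Next the term $I=\{1\}$. It is present iff $\beta<r=k+\varepsilon$. We use nonresonance to rule out equalities: $c_N(\ell+\beta)\in\Sigma$ for all $\ell\in\N_0$, so $r-\beta\notin\N_0$ and $\varepsilon\neq\beta$ when $\beta<1$. There are two cases.

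\begin{enumerate}
\item If $\varepsilon<\beta$, then $r-\beta=k-(\beta-\varepsilon)$ with $\beta-\varepsilon\in(0,1)$. The term is present iff $k\geq1$, and then $k_{\{1\}}=k-1$.
\item If $\varepsilon>\beta$, which forces $\beta<1$, then $r-\beta\in(k,k+1)$. The term is always present, with $k_{\{1\}}=k$.
\end{enumerate}

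The case $\beta=1$ falls under the first case, since $\varepsilon<1$ always. This matches the statement, in which $(\beta,1)$ is empty.

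Now we combine, writing $n=k-\chi(\Om)$.

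In the case $\varepsilon\in(\beta,1)$ we get $d_\rho=\binom{n+1}{k}-\binom{n+1}{k}=0$.

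In the case $\varepsilon\in(0,\beta)$ with $k\geq1$ we get $d_\rho=\binom{n+1}{k}-\binom{n}{k-1}$. Pascal's rule $\binom{z+1}{k}=\binom{z}{k}+\binom{z}{k-1}$ is a polynomial identity in $z$, so it holds for the generalized binomial coefficients with integer (possibly negative) top entry $z=n$. Hence $d_\rho=\binom{n}{k}=\binom{k-\chi(\Om)}{k}$.

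If $k=0$ in this case, only the empty term survives and $d_\rho=\binom{1-\chi(\Om)}{0}=1=\binom{-\chi(\Om)}{0}$, as claimed.

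The only point requiring care is the floor bookkeeping at the endpoints, which nonresonance settles, together with the validity of Pascal's rule for negative upper arguments. There is no analytic obstacle: everything reduces to Theorem~\ref{thm:degree}.
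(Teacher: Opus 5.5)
Your proof is correct and follows the paper's argument: you specialize \eqref{eq:degree} to $m=1$ and read off $k_\varnothing=k$ and $k_{\{1\}}=k-1$ or $k$ according to whether $\varepsilon<\beta$ or $\varepsilon>\beta$, then conclude by Pascal's rule (valid for integer, possibly negative, upper entries) or by exact cancellation. Your explicit treatment of the $k=0$ and $\beta=1$ subcases and of the nonresonance exclusions only spells out what the paper's three-line proof leaves implicit.
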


\begin{corollary}\label{cor:equalmarks}
Assume \eqref{eq:assumptions}, and suppose all $m\geq1$ points of $P$ have weight $\beta$. For $r\in (0,1)$ with $c_Nr\notin\Sigma$, set $j=\lfloor r/\beta\rfloor$. If $j<m$, $d_\rho=(-1)^j\binom{m-1}j$; if $j\geq m$, $d_\rho=0$. In particular, if $m=2$ and $\beta<r<\min\{2\beta,1\}$, the degree is $-1$ on every domain under consideration, and \eqref{maineq} has a solution.
\end{corollary}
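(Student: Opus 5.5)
This is a direct specialization of Theorem~\ref{thm:degree}, and the plan is to evaluate \eqref{eq:degree} with all $\beta_i=\beta$. Since $r<1$, we have $\lfloor r-\beta_I\rfloor=0$ whenever $\beta_I<r$, because then $0<r-\beta_I<1$. Nonresonance excludes $r-\beta_I\in\N_0$, and in particular excludes $r=\beta_I$. Hence each binomial coefficient in \eqref{eq:degree} equals $\binom{z}{0}=1$, and the dependence on $\chi(\Om)$ disappears entirely. The formula therefore reduces to
\[
d_\rho=\sum_{\substack{I\subset\{1,\ldots,m\}\\ |I|\beta<r}}(-1)^{|I|}.
\]

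Next, identify the index range. The condition $|I|\beta<r$ is equivalent to $|I|<r/\beta$. Nonresonance gives $r\neq |I|\beta$ for $|I|\geq1$, so the condition reads $|I|\leq j$ with $j=\lfloor r/\beta\rfloor$; the case $|I|=0$ is always admitted since $r>0$. Counting subsets by cardinality then gives
\[
d_\rho=\sum_{i=0}^{\min\{j,m\}}(-1)^i\binom mi.
\]

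The only computation is the standard alternating partial sum $\sum_{i=0}^{j}(-1)^i\binom mi=(-1)^j\binom{m-1}j$ for $j<m$. I would prove it by induction on $j$ using Pascal's rule $\binom mi=\binom{m-1}i+\binom{m-1}{i-1}$, which makes the sum telescope. When $j\geq m$ the full sum is $(1-1)^m=0$, since $m\geq1$.

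For the final claim, take $m=2$ and $\beta<r<\min\{2\beta,1\}$. Then $j=1<m$, so $d_\rho=-\binom11=-1$. The hypothesis $r<1$ holds, and such an $r$ is nonresonant: the elements of $\Sigma/c_N$ below $1$ are only $\beta$ and $2\beta$, and $r$ lies strictly between them. Corollary~\ref{cor:existence} then gives a solution. The only point needing care is checking that nonresonance excludes the boundary cases $r=|I|\beta$; there is no real obstacle.
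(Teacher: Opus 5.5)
Your proposal is correct and follows the paper's own proof. With $r<1$ every $k_I$ vanishes, each admissible $I$ (by nonresonance, exactly those with $|I|\le j$) contributes $(-1)^{|I|}$, and the alternating partial binomial sum gives the formula. The only difference is cosmetic: for $m=2$ the paper reads off $\Br(K)=\{\delta_{p_1},\delta_{p_2}\}$ and $1-\chi(\Br(K))=-1$ directly, whereas you specialize the general formula at $j=1$ and also verify explicitly that this interval is nonresonant.
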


\subsection{Proof strategy and scope}
The proofs separate concentration, degree comparison, and the topology of negative sublevels. Remarks~\ref{rem:sharp} and \ref{rem:positive} discuss the parameter restrictions and the obstructions for positive exponents.
The first stage establishes compactness on compact parameter sets avoiding $\Sigma$. A local exponential estimate locates finitely many concentration points. At a possible boundary concentration point, uniqueness for measure-data solutions below the energy class identifies the limit with a regular boundary solution; a boundary-cap Pohozaev identity then excludes concentration at that point. For an interior point, positivity and the logarithmic singularity of the nonlinear limit force the normalization factor to vanish, eliminating diffuse mass. The interior Pohozaev identity fixes the total concentration mass at $c_N$ or $c_N(1+\alpha_i)^{N-1}$. Because the mass is measured before any rescaling, this calculation includes mass carried by collapsed peaks and intermediate scales. The normalization agrees with \cite[Theorem 1.4]{Esposito}; the entire weighted solutions in the nonpositive-exponent range are classified in \cite[Theorems 1.1--1.2]{CEL}. These entire-space results provide context for the mass constant; the boundary exclusion and aggregate quantization for \eqref{maineq} are proved in Section~\ref{sec:compact}.

Chen--Lin's degree computations use sharp blow-up estimates \cite{CL2002,CL03,CL15} to determine the local contributions of concentrated solution families and hence the jumps across resonances. The reduction involves linear Green functions and their regular parts, together with spectral estimates after the bubble parameters have been separated \cite[Sections 2--4]{CL15}. For $N>2$, the Dirichlet operator $A=-\Delta_N$ in \eqref{eq:solutionmap} satisfies $A^{-1}(tf)=t^{1/(N-1)}A^{-1}(f)$ for $t>0$; its inverse is nonlinear, and the field generated by several concentration points cannot be obtained by Green-function superposition. Its linearized principal operator also degenerates wherever $\nabla u=0$. A corresponding local degree computation would therefore require new interaction expansions and uniform estimates for the linearized remainder, beyond the aggregate mass quantization established here. This does not rule out such a method; it explains why the semilinear reduction cannot simply be transferred. Our degree--Euler comparison avoids these additional expansions and estimates, while Section~\ref{sec:topology} establishes the full sublevel homotopy type. In dimension two, the Chen--Lin method already applies.

The second stage, carried out in Section~\ref{sec:deformation}, proves the degree--Euler identity in Theorem~\ref{thm:degreeEuler}. Compactness of exact solutions does not give compactness of arbitrary unbounded Palais--Smale sequences. Following the deformation approach of Lucia \cite{Lucia05}, \cite[Proposition 1.1]{Lucia07} and Hor\'ak--Lucia \cite{HL}, Propositions~\ref{prop:strip}--\ref{prop:margin} construct fields that decrease two nearby-parameter energies simultaneously. The upper bound for $J_{\rho-\delta}$ and the lower target bound for $J_\rho$ give the orbit bound in \eqref{eq:normidentity}. Bounded approximate-critical compactness then gives finite, continuous hitting times. An additional nearby-parameter constraint makes the set between the relevant energy levels bounded. Adding the same distance term to both energies preserves the identity \eqref{eq:penaltyidentity} in Appendix~\ref{app:technical}; the proof of Lemma~\ref{lem:penalty} then compares the corresponding relative quotient spaces and retracts the modified quotient onto a finite-dimensional section. The Galerkin comparison in Lemma~\ref{lem:Galerkin} fixes the degree sign, and the subsequent Morse calculation uses an inward boundary field without assuming nondegeneracy of the original solutions.

The third stage, in Section~\ref{sec:topology}, identifies the negative sublevels. Carlotto and Malchiodi constructed test and concentration maps whose composition on weighted barycenters is homotopic to the identity \cite[Lemma 4.3]{CM}; as they explain at the start of their Section~3, a one-sided comparison does not establish homotopy equivalence. Here the test profiles prescribe the total weighted mass exactly and depend continuously on barycenters through collisions and vanishing coefficients. For the reverse comparison, spherical averaging produces constant traces on annuli. Rescaling the inner profiles and adding logarithmic functions on annuli gives the identities \eqref{eq:dilation}, while Proposition~\ref{prop:margin} of Section~\ref{sec:deformation} supplies an additional energy term to compensate for the bounded mass losses. Removing the original profiles contracts each suitably concentrated local family to a fixed profile. Nested ball carriers allow these contractions to extend a prescribed boundary map over one simplex at a time, including when ordinary centers approach points of $P$. The parameter choices precede subdivision of the domain complex, so refinements cannot consume the fixed energy and mass bounds. The relative extension proves injectivity as well as surjectivity on homotopy groups. The CW homotopy type of regular sublevels then gives the homotopy equivalence in Theorem~\ref{thm:reverse}.

Finally, configuration strata of the compact weighted barycenter space give the finite sum \eqref{eq:degree}. The admissible weighted support counts in \eqref{eq:barycenters} remain unchanged on each nonresonant parameter interval, and points with zero exponent cancel exactly. The sign restriction $\alpha_i\leq0$ enters the topology through $\beta_i\leq1$: collisions of support points or recentering an ordinary ball at a point of $P$ cannot increase its cost. Consequently the weighted barycenter space is compact and the carrier construction preserves its strict energy inequality. The results concern nonresonant parameters and the fixed range \eqref{eq:assumptions}.

\begin{remark}\label{rem:sharp}
The lower restriction $\alpha_i>-1$ is necessary to define the functional on all of $E$: otherwise $M(0)=\infty$. On $B_1(0)$, take $H(x)=|x|^{N\alpha}$ with $-1<\alpha\leq0$, set $\theta=1+\alpha>0$ and $\beta=\theta^{N-1}$, and, for $t>0$,
\[
u_t(x)=N\ln \frac{1+t}{1+t|x|^{q\theta}},\quad 
\rho_t=c_N\beta\Big(\frac t{1+t}\Big)^{N-1}.
\]
Direct radial integration gives
\[
M(u_t)=\frac{S_N(1+t)}{N\theta},\quad 
-\Delta_Nu_t=\rho_t\frac{He^{u_t}}{M(u_t)},\quad 
Q(u_t)=c_N\beta\int_0^t\frac{s^{N-1}}{(1+s)^N}\dd s.
\]
Since $s^{N-1}|u_t'(s)|^{N-2}u_t'(s)\to0$ as $s\downarrow0$, these identities hold weakly at the origin and $u_t\in E$. Thus
\[
\max_{x\in\overline B_1(0)}u_t(x)=N\ln (1+t)\to \infty,\quad
\rho_t\to  c_N\beta,\quad
J_{\rho,H}(u_t)=(c_N\beta-\rho)\ln  t+O(1).
\]
For fixed $t>0$, $|u_t'(s)|=Ntq\theta s^{q\theta-1}/(1+ts^{q\theta})$. It is unbounded at zero when $-1<\alpha<-1/N$; at $\alpha=-1/N$ it has a nonzero limit, so the radial function is not differentiable there. Hence global $C^1$ regularity cannot hold throughout \eqref{eq:assumptions}.

The same obstruction holds without radial symmetry. Suppose $\rho>0$ and a
solution is $C^1$ near $p_i$, and write
$H(x)=h_i(x)|x-p_i|^{N\alpha_i}$ with $h_i(p_i)>0$.
Set $\mathcal V_u=|\nabla u|^{N-2}\nabla u$, choose a nonzero
$\psi\in C^\infty_c(B_1(0))$ with $\psi\geq0$, and let
$\psi_R(x)=\psi((x-p_i)/R)$. Testing \eqref{maineq} gives, as $R\downarrow0$,
\begin{align}
    C_iR^{N(1+\alpha_i)}(1+o(1))
&=\frac{\rho}{M(u)}\int_\Om He^u\psi_R\dd x\nonumber\\
&=\int_{B_R(p_i)}[\mathcal V_u(x)-\mathcal V_u(p_i)]
       \cdot\nabla\psi_R(x)\dd x
=o(R^{N-1}),\label{eq:C1obstruction}
\end{align}
where
$C_i=\rho h_i(p_i)e^{u(p_i)}M(u)^{-1}
\int_{B_1(0)}|y|^{N\alpha_i}\psi(y)\dd y>0$.
The last equality follows from continuity of $\mathcal V_u$ and
$\int_{B_R(p_i)}\nabla\psi_R\dd x=0$.
Equation \eqref{eq:C1obstruction} is impossible when $\alpha_i\leq-1/N$.
For $\alpha>-1/N$, the displayed radial solutions are $C^1$ at zero,
so this obstruction cannot be extended to a larger exponent range.

The first threshold and logarithmic constant are sharp in this class; the functional is not coercive at $c_N\beta$ and is unbounded below above it. This does not realize every value of $\Sigma$.

The coercivity range in Proposition~\ref{prop:functional} is optimal on every domain considered here. Set $\beta_*:=\min(\{1\}\cup\{\beta_i:1\leq i\leq m\})$ and choose an interior point $y$ of weight $\beta_*$. Write $\theta=\beta_*^{1/(N-1)}$ and choose $B_R(y)\Subset\Om$ with $B_R(y)\cap P\subset\{y\}$, so that $H(x)=h_y(x)|x-y|^{N(\theta-1)}$ there, with $h_y$ positive and continuous. For $T>0$, define
\[
w_T(x)=\gamma_N\theta\min\{T,\ln(R/|x-y|)\}\quad\text{in }B_R(y),
\]
set $w_T(y)=\gamma_N\theta T$, and extend $w_T$ by zero. Direct radial integration, using the positive upper and lower bounds for $h_y$, gives
\[
Q(w_T)=q\theta c_N\beta_*T,\quad B(w_T)=q\theta T+O(1),\quad
J_{\rho,H}(w_T)=q\theta(c_N\beta_*-\rho)T+O(1).
\]
Since $\norm{w_T}_E\to\infty$, $J_{\rho,H}$ is not coercive at $\rho=c_N\beta_*$ and is unbounded below for every $\rho>c_N\beta_*$. Moreover $B(w_T)/Q(w_T)\to1/(c_N\beta_*)$, so no estimate $B(u)\leq cQ(u)+C$ can hold on $E$ with $c<1/(c_N\beta_*)$. These conclusions include resonant parameters and do not require the test-map construction. They do not assert the existence of blowing-up solutions at every resonance.

No optimality is claimed for the $C^1$ regularity of $h$ or for the smoothness of $\partial\Om$.
\end{remark}

The role of the upper restriction on the exponents is different from the
integrability restriction discussed in Remark~\ref{rem:sharp}.
\begin{remark}\label{rem:positive}
For arbitrary fixed exponents $\alpha_i>-1$, $H\in L^s(\Om)$ for some
$s>1$ and $H>0$ almost everywhere. The variational argument in
Proposition~\ref{prop:functional} still gives a $C^1$ functional, a compact
solution map, and a minimizer for $0\leq\rho<c_N\beta_*$, where
$\beta_*:=\min(\{1\}\cup\{(1+\alpha_i)^{N-1}:1\leq i\leq m\})$.
The difficulty with positive exponents is therefore not the definition of
the functional.

If $\alpha_i>0$, then $\beta_i>1$, but the local estimate
\eqref{eq:localMT} cannot retain the weight $\omega=\beta_i$ on an
arbitrary neighborhood of $p_i$. Choose an ordinary point $y\ne p_i$
in that neighborhood and $B_R(y)$ avoiding $P$. For $T>0$, set
$v_T(x)=\gamma_N\min\{T,\ln(R/|x-y|)\}$ in $B_R(y)$, with
$v_T(y)=\gamma_NT$, and extend it by zero. Since $H$ is positive
and continuous on this ball, radial integration gives
\begin{equation}\label{eq:positiveprofiles}
Q(v_T)=qc_NT,\quad  \ln M(v_T)=qT+O(1),\quad 
\mu_{v_T}\weak\delta_y\quad(T\to\infty).
\end{equation}
Equation \eqref{eq:positiveprofiles} contradicts \eqref{eq:localMT}
with $\omega=\beta_i$ whenever
$0<\varepsilon<(1-\beta_i^{-1})/c_N$; the mass asymptotic is unchanged
when the integral is restricted to the chosen neighborhood. Thus concentration near a
positive zero must be distinguished by its location and scale; the
larger weight cannot be assigned to the whole neighborhood. Related
improvements with additional concentration information in dimension two
were obtained by Malchiodi--Ruiz \cite[Propositions 3.1 and 4.1]{MR11}.

There is also a topological obstruction. With one point $p_1=0$ of
weight $\beta>1$ and $1<r<\min\{2,\beta\}$, ordinary points
$x_j\to0$ give $\delta_{x_j}\in\Br(K)$ but $\delta_0\notin\Br(K)$.
For $N=3$, take $\Om=B_2(0)$ and the polyhedral core $K=[-1,1]^3$. The space is exactly
$\{\delta_x:x\in K\setminus\{0\}\}\simeq\mathbb S^2$.
Its ordinary Euler characteristic is $2$, whereas its compactly
supported Euler characteristic is $0$. Formula \eqref{eq:degree}
would give $1$ in this example, while \eqref{eq:mainhomotopy} with
this space would give $-1$. Hence the degree formula and the sublevel
identification cannot both extend unchanged. The distinction between
$\chi$ and $\chi_c$ is essential here; see
\cite[Theorem 1.1 and Corollary 1.2]{Kallel}. Moreover, recentering an
ordinary ball at $p_i$ in Lemma~\ref{lem:carriers} changes its weight
from $1$ to $\beta_i>1$, and may violate the energy inequality in
Lemma~\ref{lem:contract}.

The compactness proof also needs a new argument at a positive zero.
The exponential estimate \eqref{eq:BM} directly gives the ordinary
small-mass threshold there, rather than the threshold with
$\theta=1+\alpha_i>1$ in \eqref{eq:smallmass}. The resulting lower
bound for an atom no longer supplies the logarithmic coefficient
$a\geq N(1+\alpha_i)$ used to prove $b=0$ in
Theorem~\ref{thm:compact}. The absence of diffuse mass must therefore
be established before repeating that proof. This should be distinguished
from the total-mass formula for entire solutions, which already holds
for all $\alpha>-1$ by \cite[Theorem 1.4]{Esposito}.
The local Pohozaev and dilation identities
\eqref{eq:interiorpoho} and \eqref{eq:dilation} do not themselves
require nonpositive exponents. In dimension two the positive-exponent
degree formula is already covered by \cite[Theorem 1.4]{CL15}; for
$N\geq3$, extending the present argument requires new concentration
and topological comparisons, rather than a change of the range in
\eqref{eq:assumptions}.
\end{remark}

For resonant parameters $\rho\in\Sigma$, we plan to study the degree by adapting Bahri's theory of critical points at infinity \cite{Bahri}. In the degenerate quasilinear setting, an appropriate nonsmooth critical point theory may be needed, as the standard smooth arguments may not apply directly.

The present paper is organized as follows. Section~\ref{sec:compact} establishes the analytic framework and concentration. Section~\ref{sec:deformation} proves the deformation and degree--Euler statements. Section~\ref{sec:topology} identifies negative sublevels, and Section~\ref{sec:formula} completes the proofs of the main results. Appendix~\ref{app:technical} contains the proofs of several technical lemmas.

\medskip
\noindent\textbf{Notation.}
Throughout the paper, we use the following notation and conventions.
We use $s'=s/(s-1)$ for $s>1$, $t^+=\max\{t,0\}$, and $\ln^+t=\max\{\ln t,0\}$ for $t>0$.
When $H$ is fixed, we write $J_\rho=J_{\rho,H}$ and
$d_\rho=d_\rho(\Om,H)$.
\begin{itemize}[leftmargin=1.5em]
\setlength{\itemsep}{4pt}

\item
$E=W^{1,N}_0(\Om)$ is equipped with
$\norm{u}_E=\norm{\nabla u}_{L^N(\Om)}$.
Its real continuous dual is $E^*$, with pairing
$\pair{\cdot}{\cdot}$, and $B_R^E$ denotes the open ball of radius
$R$ centered at zero. Balls $B_R(x)$ without the superscript $E$ are Euclidean. We denote the outward unit normal by $\nu$ and surface measure by $\dd S_x$.

\item
$Q(u)$, $M(u)$, $B(u)=\ln  M(u)$, and $J_\rho(u)$ are defined
in \eqref{eq:functional}. For $a\in\R$, we set
$J_\rho^a=\{u\in E:J_\rho(u)\leq a\}$.

\item
$q$, $\gamma_N$, $S_N$, and $c_N$ are the constants in
\eqref{eq:constants}; in particular, $c_2=8\pi$.
We use $\beta_i=(1+\alpha_i)^{N-1}$,
$\beta_I=\sum_{i\in I}\beta_i$, and $r=\rho/c_N$,
with $\beta_\varnothing=0$. The resonance set $\Sigma$ is defined
in \eqref{eq:resonance}; $\rho\geq0$ is nonresonant when
$\rho\notin\Sigma$.

\item
$A:E\to E^*$ denotes the Dirichlet $N$-Laplacian.
The solution map $K_{\rho,H}$ and the Leray--Schauder degree
$d_\rho$ are defined in \eqref{eq:solutionmap} and
\eqref{eq:degreedef}, respectively.

\item
$\Prob(Y)$ denotes the probability measures on a compact space
$Y$, with the weak topology. For $u\in E$, the normalized measure
on $\bOm$ is $\mu_u=H(x)e^{u(x)}\dd x/M(u)$.

\item
The weighted support cardinality $\card_\beta$ and the space $\Br(Y)$ are defined in Section~\ref{subsec:main}. For compact $Y\subset\bOm$, the bounded--Lipschitz distance on $\Prob(Y)$ is
\begin{equation}\label{eq:BLdistance}
d_{\mathrm{BL}}(\mu,\nu)=\sup_{\substack{f:Y\to\R\\\max_{x\in Y}|f(x)|+\Lip(f)\leq1}}\Big|\int_Y f(x)\dd(\mu-\nu)(x)\Big|.
\end{equation}
Here $\Lip(f)$ is computed using Euclidean distance. We identify measures on $Y$ with their extensions by zero to $\bOm$.

\item
In Section~\ref{sec:topology} we use $\Fcal_a=(1+a)Q-\rho B$, $a\geq0$, as in \eqref{eq:weightedenergy}; thus $\Fcal_0=J_\rho$.
The constraint set $\Mcap=\{J_{\rho-\delta}\leq C\}$, introduced in Section~\ref{sec:deformation} after $\delta,C$ have been fixed, is distinct from the mass integral $M(u)$.

\item
Homology is singular homology. When its total rank over $\Q$ is finite, $\chi(X)=\sum_{j=0}^\infty(-1)^j\dim_\Q H_j(X;\Q)$ and $\chi(X,A)=\sum_{j=0}^\infty(-1)^j\dim_\Q H_j(X,A;\Q)$. We set $\chi(\varnothing)=0$ and distinguish $\chi$ from the compactly supported Euler characteristic $\chi_c$ on noncompact configuration strata. We write $X\simeq Y$ for homotopy equivalent spaces and $f\simeq g$ for homotopic maps.

\item
The constant $C>0$ may vary from line to line. Its dependencies
are specified in the statements; dependence on the fixed domain,
weight, chosen balls, and cutoff functions may be suppressed in proofs.
Unless otherwise stated, constants in estimates for sequences are
independent of the sequence index, and $O(\cdot)$ and $o(\cdot)$
refer to the limit specified in the statement or proof.
\end{itemize}

\section{Compactness and concentration}\label{sec:compact}
The points and exponents are fixed in this section. We write $H_h(x)=h(x)\prod_{i=1}^m|x-p_i|^{N\alpha_i}$, $M_h(u)=\int_\Om H_he^u\dd x$, and $\theta_i=1+\alpha_i$, so $\beta_i=\theta_i^{N-1}$. Constants may depend on $\Om$, $P$, and the exponents. Uniformity in $h$ always means a common positive lower bound and a common $C^1(\bOm)$ bound. We use the constants in \eqref{eq:constants} and the resonance set \eqref{eq:resonance} throughout.

\subsection{Variational framework}
\begin{proposition}\label{prop:functional}
For $\rho\geq0$, $J_{\rho,H_h}$ is well defined and $C^1$ on $E$. Define
\[
\pair{A(u)}{v}=\int_\Om|\nabla u|^{N-2}\nabla u\cdot\nabla v\dd x,\quad 
\pair{\Gcal_h(u)}{v}=\frac{\int_\Om H_he^uv\dd x}{M_h(u)}.
\]
Then $J'_{\rho,H_h}=A-\rho\Gcal_h$, $A$ is a homeomorphism, and $K_{\rho,H_h}=A^{-1}(\rho\Gcal_h)$ is completely continuous. This compactness is collective for bounded $\rho$ and the preceding bounded positive families of $h$.

Every critical point is a bounded continuous weak solution of \eqref{maineq} and belongs to $C^{1,\eta}$ on compact subsets of $\bOm\setminus P$, for some $\eta>0$ on each such set. The bounds are uniform for critical points in bounded subsets of $E$. If $\beta_*:=\min\{1,\beta_1,\ldots,\beta_m\}$, then $J_{\rho,H_h}$ is coercive and attains its minimum for $0\leq\rho<c_N\beta_*$.
\end{proposition}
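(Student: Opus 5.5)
The plan is to combine the Moser--Trudinger inequality with Hölder's inequality. Since $\alpha_i>-1$, we have $H_h\in L^s(\Om)$ for some $s>1$, with norm controlled by $\min h$, $\norm h_{C^0}$ and the exponents. The inequality $\sup_{\norm u_E\le1}\int_\Om e^{\alpha_N|u|^{q}}\dd x<\infty$, with $\alpha_N=NS_N^{1/(N-1)}$, together with Young's inequality, shows that $e^{s'|u|}\in L^1$. It also shows that $u\mapsto e^u$ is continuous from $E$ into $L^{t}$ for every $t<\infty$. Therefore $M_h$ is finite and positive on $E$. Differentiating under the integral gives $M_h'(u)v=\int H_he^uv$, and $M_h'$ is continuous in $u$. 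Here we use $|e^a-e^b-e^b(a-b)|\le|a-b|^2e^{|a|+|b|}$, Hölder with exponents $s,t,t$, and the embedding $E\hookrightarrow L^t$. Since $Q$ is $C^1$ with $Q'=A$, this yields $J'=A-\rho\Gcal_h$.

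Next come the operator-theoretic claims. The operator $A$ is strictly monotone, coercive and hemicontinuous. Minty--Browder then gives bijectivity onto $E^*$. Continuity of $A^{-1}$ follows from the $(S_+)$ property: the inequality $\pair{A(u)-A(v)}{u-v}\ge c\norm{u-v}_E^N$ (valid for $N\ge2$) even gives Hölder continuity of $A^{-1}$ on bounded sets. For complete continuity, let $u_n$ be bounded in $E$, with $\rho_n$ and $h_n$ in the admissible bounded families. After passing to subsequences we may assume $u_n\to u$ in $L^t$ and $h_n\to h$ uniformly by Arzelà--Ascoli. Then $e^{u_n}\to e^u$ in $L^t$, and $M_{h_n}(u_n)$ is bounded below by Jensen's inequality. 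Hence $\Gcal_{h_n}(u_n)\to\Gcal_h(u)$ in $E^*$, because $H_{h_n}e^{u_n}\to H_he^u$ in $L^{\tilde s}$ for some $\tilde s>1$, and this space embeds into $E^*$. Continuity of $A^{-1}$ then finishes this step.

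For regularity, a critical point satisfies $-\Delta_Nu=f$ with $f=\rho H_he^u/M_h(u)\in L^{\tilde s}$, where $\tilde s>1$. Since we are in the borderline dimension $p=N$, data in $L^{\tilde s}$ with $\tilde s>1$ yields $u\in L^\infty$ and continuity up to the boundary. I would obtain this by Serrin/Trudinger-type Moser iteration or Talenti comparison, and the bound depends only on $\norm f_{L^{\tilde s}}$, which is uniform on bounded subsets of $E$. Once $u$ is bounded, $f$ is bounded away from $P$. The regularity results of DiBenedetto and Tolksdorf in the interior, and Lieberman at the smooth boundary, then give $C^{1,\eta}$ on compact subsets of $\bOm\setminus P$. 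I expect the $L^\infty$ bound to be the main technical point, since it must be uniform and must treat the singular weight. Testing with $(u-k)^+$ and using the $L^{\tilde s}$ integrability of $f$ should give a De Giorgi iteration with the required uniformity.

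Finally, coercivity requires a weighted Moser--Trudinger inequality. I would show that for every $\epsilon>0$,
\[
\ln\int_\Om H_he^u\dd x\le\Big(\frac{1}{c_N\beta_*}+\epsilon\Big)Q(u)+C_\epsilon.
\]
This comes from a partition of unity into small balls around each $p_i$ and balls away from $P$. On a ball around $p_i$ one uses the singular Adimurthi--Sandeep inequality $\int e^{\alpha_N(1+\alpha_i)|u|^q}|x|^{N\alpha_i}<\infty$, combined with Young's inequality; this produces the constant $c_N\theta_i^{N-1}$. Away from $P$ the same argument without the weight gives $c_N$. The localization errors are absorbed with the cutoff functions, in the standard way of the regular case. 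Then $J\ge(1-\rho/(c_N\beta_*)-\rho\epsilon)Q-C$ is coercive when $\rho<c_N\beta_*$. The term $B$ is weakly continuous by the compactness above, and $Q$ is weakly lower semicontinuous, so the direct method produces a minimizer.
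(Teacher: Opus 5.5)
Your argument is correct. The analytic half matches the paper in substance: $C^1$ regularity via Moser--Trudinger, Young and H\"older with $H_h\in L^s$, the Jensen lower bound for $M_h$, the $N$-monotonicity inequality for $A^{-1}$, and compactness uniformly in $\rho$ and $h$. Using Minty--Browder instead of directly minimizing $Q(u)-\pair{f}{u}$ is cosmetic.

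The routes differ in regularity and coercivity.

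\textbf{Regularity.} The paper never uses H\"older theory for $L^s$ data. It proves the stability estimate $\norm{u-v}_{L^\infty}\le C_s\norm{f-g}_{L^s}^{1/(N-1)}$ by De Giorgi iteration on $(u-v-k)^+$; your $L^\infty$ bound is the case $v=0$. It then truncates the $L^t$ right-hand side and applies only bounded-data $C^{1,\eta}$ estimates up to the boundary, so continuity on $\bOm$ comes as a uniform limit. The same argument gives relative compactness in $C^0(\bOm)$ for solutions with $L^t$-bounded data, which is reused in Theorem~\ref{thm:compact}. Your appeal to Serrin's local H\"older estimate near $P$ also works, since $\tilde s>1=N/p$. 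It gives uniform H\"older moduli, hence the same compactness by Arzel\`a--Ascoli, but it relies on heavier theory. Note that Talenti comparison alone would give boundedness, not continuity at the points of $P$.

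\textbf{Coercivity.} The paper needs neither a singular Moser--Trudinger inequality nor localization. One global H\"older step gives $\ln M_h(u)\le C_s+(s')^{N-1}Q(u)/c_N$, and $(s')^{N-1}\to1/\beta_*$ as $s\uparrow\min_{\alpha_i<0}(-1/\alpha_i)$, because only the worst point matters. Your Adimurthi--Sandeep route is fine, but the cutoff localization you sketch is not harmless. One has $Q(\psi u)\le(1+\delta)Q(u)+C_\delta\int_\Om|u|^N|\nabla\psi|^N\dd x$, and the last term is in general of the same order as $Q(u)$. So it cannot simply be ``absorbed''; Lemma~\ref{lem:improvedMT} uses the truncation $(u-T)^+$ precisely for this. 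Here the localization is unnecessary: bound $H_h\le C(1+\sum_{i=1}^m|x-p_i|^{N\alpha_i})$ and apply the singular inequality on all of $\Om$ to $u$ itself, with no cutoffs.
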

\begin{proof}
The Moser--Trudinger inequality and Young's inequality with conjugate exponents $q$ and $N$ imply
\begin{equation}\label{eq:expbounded}
\sup_{\norm{u}_E\leq R}\int_\Om e^{a|u|}\dd x<\infty\quad (R,a<\infty).
\end{equation}
The normalization of the sharp logarithmic form is
\begin{equation}\label{eq:MT}
\ln \int_\Om e^v\dd x\leq C_\Om+\frac{Q(v)}{c_N};
\end{equation}
Indeed, for $R>0$ and $t\geq0$, optimization of the right-hand side gives
\[
t\leq N S_N^{1/(N-1)}(t/R)^q+\frac{R^N}{Nc_N}.
\]
Apply this with $t=|v(x)|$ and $R=\norm v_E$, and then use the sharp Moser inequality \cite[Theorem 1]{Moser}; the case $R=0$ is immediate. The resulting inequality is \eqref{eq:MT} with the normalization of $Q$ in \eqref{eq:functional}. Choose $s>1$ with $H_h\in L^s$. H\"older's inequality then gives $H_he^u\in L^t$ for some $t>1$, uniformly on bounded subsets of $E$. Jensen's inequality for the probability measure $\dd\nu_h(x)=H_h(x)\dd x/\int_\Om H_h(y)\dd y$ gives
\[
M_h(u)\geq\Big(\int_\Om H_h\dd x\Big)\exp\Big(\int_\Om u(x)\dd\nu_h(x)\Big).
\]
The embedding $E\hookrightarrow L^{s/(s-1)}$ gives a positive lower bound on each bounded subset.

If $u_j\weak u$ in $E$, compact Sobolev embeddings and \eqref{eq:expbounded} give $e^{u_j}\to e^u$ in every finite $L^a$, by uniform integrability, and hence $H_he^{u_j}\to H_he^u$ in some $L^t$, $t>1$. Thus $\Gcal_h(u_j)\to\Gcal_h(u)$ in $E^*$. The exponential remainder formula, \eqref{eq:expbounded}, and H\"older's inequality prove continuous Fr\'echet differentiability. For completeness, $A$ is onto because $Q(u)-\pair{f}{u}$ is coercive, strictly convex, and weakly lower semicontinuous for $f\in E^*$. Its unique minimizer solves $A(u)=f$. The inequality
\begin{equation}\label{eq:monotone}
\pair{A(v)-A(w)}{v-w}\geq d_N\norm{v-w}_E^N\quad (d_N>0)
\end{equation}
proves injectivity and continuity of $A^{-1}$; continuity of $A$ follows from H\"older's inequality. Continuity of $A^{-1}$ and compactness of $\Gcal_h$ prove compactness of $K_{\rho,H_h}$ in \eqref{eq:solutionmap}. For varying $h_j$, take a uniformly convergent subsequence; then $H_{h_j}\to H_h$ in a fixed $L^s$ and the same argument applies.

To prove continuity without assuming regularity at $P$, we first establish the stability estimate \eqref{eq:supstability}. On a smooth bounded domain, if $A(u)=f$, $A(v)=g$, $u,v\in E$, and $f-g\in L^s$, $s>1$, then
\begin{equation}\label{eq:supstability}
\norm{u-v}_{L^\infty}\leq C_s\norm{f-g}_{L^s}^{1/(N-1)}.
\end{equation}
Indeed, for $k\geq0$, test the difference with $w=(u-v-k)^+$, using bounded truncations first. For any fixed $a>Ns'$, where $s'=s/(s-1)$, monotonicity and the Sobolev inequality yield, with $D_k=\{u-v>k\}$,
\[
\norm{w}_{L^a}^{N-1}\leq C\norm{f-g}_{L^s}|D_k|^{1/s'-1/a}.
\]
Consequently, for $h>k$,
\[
|D_h|\leq\Big(\frac{C\norm{f-g}_{L^s}^{1/(N-1)}}{h-k}\Big)^a
|D_k|^{(a/s'-1)/(N-1)}.
\]
The last exponent exceeds one. Substitution of $k_j=K(1-2^{-j})$ and induction, with $K$ a sufficiently large multiple of $\norm{f-g}_{L^s}^{1/(N-1)}$, gives $|D_{k_j}|\to0$. Interchanging $u,v$ proves \eqref{eq:supstability}.

For bounded right-hand sides, the interior and zero-Dirichlet boundary $C^{1,\eta}$ estimates used here are exactly \cite[Theorem 18]{EM}; the required local boundedness follows from \cite[Theorem 15 and Corollary 17]{EM}. Truncate an $L^t$ right-hand side and use \eqref{eq:supstability} with $1<s<t$. The corresponding solutions with bounded right-hand sides converge uniformly, so their limit is continuous on $\bOm$. For a bounded family of $L^t$ right-hand sides, the truncation tails tend to zero uniformly in $L^s$; the solutions with bounded right-hand sides have uniform $C^{1,\eta}$ bounds. The same argument proves relative compactness of the full family in $C^0(\bOm)$. These conclusions require no gradient regularity at $P$. For a critical point, $H_he^u/M_h(u)\in L^t$ as above; boundedness and continuity follow. Away from $P$ the right-hand side is bounded, and the cited $C^{1,\eta}$ estimates apply, including on the boundary, where the trace is zero.

Finally, H\"older's inequality and \eqref{eq:MT}, with $s'=s/(s-1)$, give
\[
\ln  M_h(u)\leq C_s+\frac{(s')^{N-1}}{c_N}Q(u).
\]
When some $\alpha_i<0$, admissible $s$ approach $\min_{\substack{1\leq i\leq m\\\alpha_i<0}}(-1/\alpha_i)$ from below, so $(s')^{N-1}\to1/\beta_*$. If all exponents are nonnegative, $H_h$ is bounded and the coefficient is directly $1/c_N$. For $\rho<c_N\beta_*$ an admissible $s$ makes $J_{\rho,H_h}\geq cQ-C$, $c>0$. Weak continuity of the exponential term on bounded sets and weak lower semicontinuity of $Q$ give a minimizer.
\end{proof}

\subsection{Blow-up analysis}
Set $a_N=N S_N^{1/(N-1)}$. If $-\Delta_Nw=f\geq0$ on a smooth bounded domain $D$ and $\varphi$ is the $N$-harmonic function with the same boundary values, then \cite[Lemma 4]{EM} gives
\begin{equation}\label{eq:BM}
\int_D\exp\Big(\frac{a|w-\varphi|}{\norm f_{L^1(D)}^{1/(N-1)}}\Big)\dd x\leq C(a,D),
\quad  0<a<d_N^{1/(N-1)}a_N.
\end{equation}
For zero boundary values one may replace $d_N^{1/(N-1)}a_N$ by $a_N$. The constant $d_N$ is as in \eqref{eq:monotone}; the convention $S_N=N|B_1|$ gives the same constants as in \cite{EM}. We use only strictly subcritical coefficients. The estimate also applies to energy solutions with right-hand side in $L^t$, $t>1$, and a bounded $N$-harmonic function in $W^{1,N}$ with the same boundary values. To see this, approximate the right-hand side in $L^t$ and its boundary extension in $W^{1,N}$ by smooth data. For zero boundary values use zero extensions throughout. For nonconstant data, approximate a bounded extension by bounded smooth functions and solve the corresponding $N$-harmonic Dirichlet problems. Subtracting the weak equations, using the difference of the boundary extensions as a lifting, and applying \eqref{eq:monotone} shows strong $W^{1,N}$ convergence of both the solutions and the corresponding $N$-harmonic functions: the lifting terms tend to zero by H\"older's inequality, while the error in the right-hand side tends to zero in $(W^{1,N}_0)^*$. In this argument the energies are bounded by the minimization problem with the same lifting. The constant in \eqref{eq:BM} is independent of the boundary data. Fatou's theorem therefore gives the same estimate in the limit for every coefficient strictly below its threshold. If $f=0$, the solution equals $\varphi$.

\begin{lemma}\label{lem:measure}
Let $D$ be smooth and bounded and let $\varphi\in W^{1,N}(D)\cap L^\infty(D)$ be $N$-harmonic. Suppose $z_j-\varphi\in W^{1,N}_0(D)$ and
\[
-\Delta_Nz_j=f_j\geq0,\quad  \sup_{j\in\N}\norm{f_j}_{L^1(D)}\leq M<\infty,
\]
where each $f_j\in L^{t_j}(D)$ for some $t_j>1$. A subsequence converges strongly in $W^{1,a}(D)$ for every $1\leq a<N$, and $|\nabla z_j|^{N-2}\nabla z_j$ converges strongly in $L^1(D;\R^N)$. If $f_j\dd x\weak\mu$ as measures, the limit $Z$ satisfies $-\Delta_NZ=\mu$ in distributions and has the prescribed trace.
\end{lemma}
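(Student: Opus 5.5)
This is the Boccardo--Gallou\"et / Dal Maso--Murat scheme for $N$-Laplacian equations with $L^1$ data, carried out with a nonzero bounded $N$-harmonic lifting. The argument has three steps: uniform estimates from truncations, a.e. convergence of gradients, and passage to the limit in the equation.

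\emph{Step 1: uniform estimates.} Write $w_j=z_j-\varphi\in W^{1,N}_0(D)$. For $k>0$ test the difference of the equations for $z_j$ and $\varphi$ with the bounded function $T_k(w_j)=\max\{-k,\min\{k,w_j\}\}$. This test is admissible since $f_j\in L^{t_j}$ and $T_k(w_j)\in W^{1,N}_0\cap L^\infty$. It gives
\[
\int_{\{|w_j|<k\}}\big(|\nabla z_j|^{N-2}\nabla z_j-|\nabla\varphi|^{N-2}\nabla\varphi\big)\cdot\nabla w_j\dd x\leq kM .
\]
By \eqref{eq:monotone}, or rather its pointwise vector form $(|a|^{N-2}a-|b|^{N-2}b)\cdot(a-b)\geq c|a-b|^N$, this bounds $\int_{\{|w_j|<k\}}|\nabla w_j|^N\leq CkM$, with $C$ depending only on $N$. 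The usual level-set argument then gives uniform weak-$L^N$ type (Marcinkiewicz) bounds. That argument combines the Sobolev inequality for $T_k(w_j)$ with Chebyshev's inequality on $\{|w_j|\geq k\}$ and on $\{|\nabla w_j|>\lambda\}$. It yields $\sup_j\norm{\nabla w_j}_{L^a(D)}<\infty$ for every $a<N$; alternatively one can appeal to \eqref{eq:BM} for the exponential integrability of $w_j$. Hence a subsequence converges weakly in $W^{1,a}$ for all $a<N$, by a diagonal argument, and strongly in $L^a$ and a.e. to some $Z$ with $Z-\varphi\in W^{1,a}_0$. This fixes the trace.

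\emph{Step 2: a.e. convergence of gradients (main obstacle).} I follow Boccardo--Murat. Fix $k$ and $\varepsilon>0$ and test the equations for $z_j$ and $z_l$ with $T_\varepsilon(T_k(w_j)-T_k(w_l))$. The right-hand sides contribute at most $2\varepsilon M$. On $\{|w_j|<k,|w_l|<k\}$ the left-hand side controls the monotone integrand in $\nabla z_j,\nabla z_l$ over the set where $|w_j-w_l|<\varepsilon$. The contribution of the complementary set $\{|w_j|\geq k\}$ is handled by the bound $\int_{\{k\le|w_j|<k+1\}}|\nabla w_j|^N\leq CM$ obtained by testing with $T_{k+1}-T_k$; a cleaner route is to test directly with $T_\varepsilon(w_j-T_k(w))$. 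Since $|\{|w_j-w_l|\geq\varepsilon\}|\to0$ by the $L^1$ Cauchy property, and $|\{|w_j|\geq k\}|\leq C/k$, one concludes that the monotone quantity $e_j=(|\nabla z_j|^{N-2}\nabla z_j-|\nabla Z|^{N-2}\nabla Z)\cdot(\nabla z_j-\nabla Z)$ satisfies $\int e_j^{\theta}\to0$ for some $\theta\in(0,1)$, which is Hölder-bounded via Step 1. Hence $\nabla z_j\to\nabla Z$ in measure and, along a subsequence, a.e. Keeping the bookkeeping with the nonzero lifting $\varphi$ honest is the delicate part. Since $\varphi\in W^{1,N}\cap L^\infty$ is fixed, its terms are harmless.

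\emph{Step 3: conclusions.} The uniform $L^a$ bounds for $a<N$ give equi-integrability of $|\nabla z_j|^{a}$ for any $a<N$. Combined with a.e. convergence, Vitali's theorem yields strong convergence $\nabla z_j\to\nabla Z$ in $L^a$ for all $a<N$, hence strong $W^{1,a}$ convergence. The same argument applies to $|\nabla z_j|^{N-2}\nabla z_j$: it has modulus $|\nabla z_j|^{N-1}$, which is bounded in $L^{a/(N-1)}$ with $a/(N-1)>1$ for $a$ close to $N$. It is therefore equi-integrable, and it converges a.e. to $|\nabla Z|^{N-2}\nabla Z$, so it converges strongly in $L^1(D;\R^N)$. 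Finally, for $\psi\in C_c^\infty(D)$ we have $\int|\nabla z_j|^{N-2}\nabla z_j\cdot\nabla\psi=\int f_j\psi\to\int\psi\dd\mu$. The $L^1$ convergence passes the left-hand side to the limit, so $-\Delta_NZ=\mu$ in distributions.
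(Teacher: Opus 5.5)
Your proof is correct in substance, provided Step~2 uses your second test function. It is organized differently from the paper's proof and does more work than the situation requires.

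\emph{A priori bounds.} The paper uses $w_j\ge0$ (by comparison), the exponential integrability from \eqref{eq:BM}, and the test function $1-(1+w_j)^{-1}$. This gives $d_N\int_D|\nabla w_j|^N(1+w_j)^{-2}\dd x\le M$, and H\"older's inequality then gives the $W^{1,a}$ bounds. Your Boccardo--Gallou\"et level-set argument with $T_k(w_j)$ reaches the same bounds without the sign or \eqref{eq:BM}. Strictly, it gives Marcinkiewicz bounds of every order $a<N$ rather than weak-$L^N$, but that is all you use.

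\emph{Gradient convergence.} This is the real difference: the paper bypasses your Step~2 entirely. Because $z_j$ and $z_l$ have the same trace, $T_k(z_j-z_l)=T_k(w_j-w_l)\in W^{1,N}_0(D)\cap L^\infty(D)$ is admissible in the difference of their equations, and $\varphi$ cancels identically. The pointwise form of \eqref{eq:monotone}, valid for $N\ge2$, then gives $d_N\int_{\{|w_j-w_l|<k\}}|\nabla w_j-\nabla w_l|^N\dd x\le 2Mk$. Since $w_j$ converges in measure, the gradients are immediately Cauchy in measure, and your Step~3 finishes. So there is no delicate bookkeeping with the lifting. The truncation machinery you invoke buys robustness (non-energy solutions, merely strictly monotone or $u$-dependent operators). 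That is not needed here, because the $z_j$ are energy solutions.

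\emph{Your first test in Step~2 does not close; drop it.} With $T_\varepsilon(T_k(w_j)-T_k(w_l))$, the cross term $|\nabla z_l|^{N-1}|\nabla w_j|$ on $\{|w_l|\ge k,\ k-\varepsilon<|w_j|<k\}$ is not controlled. Here $|\nabla z_l|^{N-1}$ is bounded only in $L^{a/(N-1)}$ with $a<N$. The strip estimate for $\nabla w_j$ is only an $L^N$ bound, and for measure data it is not small in $k$. H\"older's inequality with these exponents fails, since $(N-1)/a+1/N>1$.

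\emph{Your second test works, and more simply than you indicate.} The function $T_\varepsilon(w_j-T_k(w))$ is bounded in $W^{1,N}_0(D)$ and converges weakly to $T_{k+\varepsilon}(w)-T_k(w)$. Since $\nabla T_k(w)=0$ where $k<|w|<k+\varepsilon$, the limiting comparison term is $\int_D|\nabla\varphi|^{N-2}\nabla\varphi\cdot\nabla(T_{k+\varepsilon}(w)-T_k(w))\dd x$. This vanishes because $\varphi$ is $N$-harmonic. Strong monotonicity then gives $\limsup_j\int_{\{|w|<k,\,|w_j-w|<\varepsilon\}}|\nabla z_j-\nabla Z|^N\dd x\le C\varepsilon M$. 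This yields convergence of the gradients in measure directly, without the $e_j^\theta$ device.
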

\begin{proof}
    The proof is in Appendix~\ref{app:technical}.
\end{proof}

\begin{lemma}\label{lem:boundaryremoval}
Let $D$ be smooth and bounded and $p\in\partial D$. Suppose
\[
U\geq0,\quad U\in\bigcap_{1<a<N}W^{1,a}(D),\quad U\in C^{1,\eta_0}_{loc}(\overline D\setminus\{p\}),
\]
for some $\eta_0>0$, with zero trace on a boundary neighborhood of $p$, and
\[
-\Delta_NU=F=bWe^U\quad\text{ in }\, \mathcal D'(D),\quad  F\in L^1(D),\quad b\geq0,
\]
where $W\geq0$ is bounded near $p$. Then $U$ extends through $p$ as a $C^{1,\eta}$ function on a smaller closed neighborhood, for some $\eta>0$.
\end{lemma}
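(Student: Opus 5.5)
The plan is to show that $F$ is bounded near $p$. Once that holds, the zero-Dirichlet boundary $C^{1,\eta}$ estimates of \cite[Theorem 18]{EM}, applied on a small boundary cap around $p$, give the extension. Since $W$ is bounded near $p$ and $b\geq0$, boundedness of $F$ reduces to an upper bound for $U$ near $p$. The difficulty is that $U$ is only known to lie in $W^{1,a}$ for $a<N$, with no a priori control at $p$. I would first localize. Choose a small smooth domain $D_r\subset D$ whose boundary contains $\partial D\cap B_r(p)$ and lies in $\overline D\cap \overline{B_{2r}(p)}$. Take $r$ so small that $U$ has zero trace on $\partial D\cap B_{2r}(p)$ and $\norm{F}_{L^1(D_r)}^{1/(N-1)}$ is as small as desired, using absolute continuity of the integral of $F\in L^1$. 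The rest of $\partial D_r$ stays away from $p$, so there $U$ is $C^{1,\eta_0}$ and in particular bounded.

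Next I would compare $U$ with the $N$-harmonic function $\varphi$ on $D_r$ that has the trace of $U$. This $\varphi$ is bounded by the maximum principle, and it lies in $W^{1,N}$ after a bounded $W^{1,N}$ extension of the boundary data built from the $C^{1,\eta_0}$ part. The key step is to apply the exponential estimate \eqref{eq:BM} to $U-\varphi$ on $D_r$, with $a$ fixed below the threshold. Since $\norm{F}_{L^1(D_r)}$ is small, this gives $e^{k|U-\varphi|}\in L^1(D_r)$ for a large $k$. Hence $e^U\in L^k(D_r)$ and $F=bWe^U\in L^t(D_r)$ for some $t>1$ (indeed every finite $t$).

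The main obstacle is justifying \eqref{eq:BM} for $U$, because $U$ is a priori only a distributional solution with $L^1$ data and not an energy solution. I would handle this by approximation. Solve $-\Delta_N U_k=T_k(F)$ in $D_r$ with the same boundary data, where $T_k$ is truncation at height $k$. By Lemma~\ref{lem:measure}, a subsequence converges in $W^{1,a}$ for $a<N$ to some solution $\tilde U$ of the same measure-data problem. Identifying $\tilde U=U$ needs uniqueness for $L^1$ data in the renormalized/entropy sense; for $N$-Laplacian problems with $L^1$ data (which lies in $L^1\cap$ the dual only after truncation) I would invoke the standard uniqueness theory for entropy solutions. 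Alternatively, I would avoid uniqueness by a comparison argument: test with truncations $T_s(U-U_k)^\pm$, which is admissible since $U\in W^{1,a}$ and the truncations are bounded, and use the monotonicity \eqref{eq:monotone} to obtain the identification. Fatou's lemma then transfers the uniform bound \eqref{eq:BM} from $U_k$ to $U$, since the constant does not depend on $k$.

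With $F\in L^t(D_r)$, $t>1$, $U$ is now an energy solution with $L^t$ data. The stability estimate \eqref{eq:supstability}, together with the truncation argument of Proposition~\ref{prop:functional}, gives $U\in L^\infty(D_{r})$. Then $F$ is bounded near $p$, and \cite[Theorem 18]{EM} yields $C^{1,\eta}$ regularity on a smaller closed neighborhood of $p$ in $\overline D$, using the zero trace near $p$.
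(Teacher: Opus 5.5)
Your architecture matches the paper's proof: localize to a cap where $\norm{F}_{L^1}$ is small, approximate by truncated data, apply \eqref{eq:BM} uniformly, pass to the limit by Lemma~\ref{lem:measure} and Fatou, then bootstrap to boundedness and \cite[Theorem 18]{EM}. You also correctly isolate the crux: the limit $\tilde U$ of the energy approximations must be identified with the given $U$, which is only a distributional solution in $\bigcap_{a<N}W^{1,a}$. However, neither of your two proposed resolutions works.

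\emph{Entropy uniqueness.} It applies only within the class of entropy (or renormalized) solutions, and nothing in the hypotheses places $U$ in that class. That would require, for instance, $T_s(U)\in W^{1,N}$ together with the entropy inequality, which is exactly the missing information.

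\emph{Testing with truncations.} This fails for the same reason. The flux $|\nabla U|^{N-2}\nabla U$ lies only in $L^{s}$ for $s<N/(N-1)$, while $\nabla T_s(U-U_k)$ lies only in $L^a$ for $a<N$. So $\int|\nabla U|^{N-2}\nabla U\cdot\nabla T_s(U-U_k)\dd x$ need not be finite, and it cannot be reached by density from $C_c^\infty$ test functions. Boundedness of the truncation controls only the right-hand side $\int F\,T_s(U-U_k)\dd x$. This is the classical obstruction to uniqueness below the energy class. For linear operators with merely measurable coefficients, Serrin's pathological solutions show that an argument using only monotonicity and truncation cannot be valid at this regularity.

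The paper closes the gap with a result specific to the borderline exponent $N$. By \cite[Proposition 6.2]{DHM}, a solution with finite-measure $N$-Laplacian lying in $W^{1,t}$ for some $t$ above a threshold $p_0\in(N-1,N)$ automatically has $\nabla U\in L^{N,\infty}$. This is where the hypothesis $U\in W^{1,a}$ for all $a<N$ is genuinely used. Then \cite[Theorem 1.2]{DHM} gives uniqueness among solutions with weak-$L^N$ gradients, the same measure data, and the same boundary values, so $U$ equals the approximation limit. The paper also checks DHM's structural hypotheses and the domain condition. Once you supply this identification, the rest of your argument goes through. One small fix: \eqref{eq:supstability} is stated for zero boundary values. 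Replace it by comparison with the zero-trace solution plus $\sup|g|$, or by the local boundary bound \cite[Theorem 15]{EM} as in the paper.
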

\begin{proof}
Choose a smooth $D_0\subset D$ agreeing with $D$ near $p$ and with $\partial D_0\cap D$ away from $p$. Its boundary data $g$ are $C^{1,\eta_0}$: they vanish on the boundary portion near $p$ and otherwise come from $U$ away from $p$. They admit a compactly supported extension $\widetilde g\in W^{1,N+\varepsilon}(\R^N)$ for some $\varepsilon>0$.

Fix an exponent $a>N$ and shrink $D_0$ so that
\[
a\norm F_{L^1(D_0)}^{1/(N-1)}<d_N^{1/(N-1)}a_N.
\]
Let $z_j$ be the energy solution with boundary value $g$ and right-hand side $F_j=\min(F,j)$, and let $\varphi$ be the $N$-harmonic energy solution with the same boundary value. Minimization gives $\varphi\in W^{1,N}(D_0)$, and comparison with the extrema of $g$ bounds it. Thus Lemma~\ref{lem:measure} applies with this choice of $\varphi$; no gradient estimate for nonconstant boundary data is needed. Estimate \eqref{eq:BM} gives
\begin{equation}\label{eq:boundaryexp}
\sup_{j\in\N}\int_{D_0}e^{a|z_j-\varphi|}\dd x<\infty.
\end{equation}
Lemma~\ref{lem:measure} gives a limit $Z$, with $z_j\to Z$ almost everywhere and in every $W^{1,t}$, $t<N$, satisfying $-\Delta_NZ=F$ and having trace $g$.

Here the identification $Z=U$ requires a uniqueness result for solutions below the energy class. We apply \cite[Proposition 6.2, p.~32]{DHM}. A smooth bounded domain has the complement-density property~A used there: exterior boundary cones give a uniform positive volume fraction at small radii, compactness handles the remaining bounded radii, and the boundedness of the domain handles large radii. For the scalar field $\sigma(x,\xi)=|\xi|^{N-2}\xi$, hypotheses (H0), (H2), and (H3) of that paper hold with $g_1=g_2=1$ and $g_3=g_4=g_5=g_6=0$. In particular its structural test is $(1-a^2)|\xi|^N\geq0$ for $|a|\leq1$. The field is independent of the unknown and satisfies the required uniform $N$-monotonicity. If $p_0\in(N-1,N)$ is the threshold in that proposition, both $U,Z$ belong to $W^{1,t}(D_0)$ for a fixed $p_0<t<N$, have a finite-measure distributional $N$-Laplacian, and have the extension $\widetilde g$ just constructed. The proposition gives $\nabla U,\nabla Z\in L^{N,\infty}(D_0)$, where $f\in L^{N,\infty}$ means $\sup_{s>0}s\,|\{|f|>s\}|^{1/N}<\infty$. Now \cite[Theorem 1.2, p.~3]{DHM} applies: their distributional $N$-Laplacians agree, $U-Z\in W^{1,1}_0(D_0)$, and both gradients are weak-$L^N$. Thus $U=Z$.

The identification uses uniqueness for the fixed right-hand side $F$, not uniqueness for the nonlinear equation $-\Delta_NU=bWe^U$. The equality $Z=U$ is therefore established before improving the regularity of $U$. Fatou's theorem and \eqref{eq:boundaryexp}, together with the boundedness of $\varphi$, now give $e^U\in L^a(D_0)$ and hence $F\in L^a(D_0)$. Choose the domain inside the neighborhood where $W$ is bounded. The right-hand sides $F_j$ are uniformly bounded in $L^a$, and \eqref{eq:boundaryexp} bounds $z_j$ in $L^N$.

Apply \cite[Theorem 15]{EM} on a smaller neighborhood meeting only the boundary portion where $g=0$. Here $z_j\geq0$: the data $g$ and $F_j$ are nonnegative, so comparison applies. Thus the constant-trace and sign hypotheses of that boundary estimate hold. It bounds $z_j$, and consequently $U$, there. Hence $F=bWe^U$ is bounded on this smaller neighborhood. Since $0\leq F_j\leq F$, \cite[Theorem 18]{EM}, again with zero trace on the actual boundary portion, gives uniform local $C^{1,\eta}$ bounds for $z_j$. A subsequence converges in $C^1$ on a still smaller closed neighborhood to the already identified limit $U$. This proves the extension through $p$. If $b=0$, the same argument applies with every $F_j=0$.
\end{proof}

\begin{theorem}\label{thm:compact}
Let $\Lambda\subset[0,\infty)\setminus\Sigma$ be compact and $C_0>1$. There exists a constant $C<\infty$ such that every weak solution with
\[
\rho\in\Lambda,\quad  C_0^{-1}\leq h\leq C_0,\quad  \norm{\nabla h}_{L^\infty}\leq C_0
\]
satisfies $\norm u_{L^\infty(\Om)}+\norm u_E\leq C$. These solutions are relatively compact in $E\cap C^0(\bOm)$. For fixed $h$, the pairs $(\rho,u)$ form a compact subset of $\Lambda\times E$ and of $\Lambda\times C^0(\bOm)$.
\end{theorem}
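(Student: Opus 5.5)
We argue by contradiction, following the blow-up scheme outlined in the strategy section. Suppose there are solutions $u_j$ with parameters $\rho_j\in\Lambda$, $\rho_j\to\rho\in\Lambda$, and weights $h_j\to h$ uniformly (Arzel\`a--Ascoli, using the $C^1$ and lower bounds), such that $\norm{u_j}_{L^\infty}+\norm{u_j}_E\to\infty$.

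\emph{Reduction to sup norms.} By the comparison principle $u_j\geq0$. The right-hand sides $f_j=\rho_jH_{h_j}e^{u_j}/M_{h_j}(u_j)$ have $L^1$ norm $\rho_j$. A bound on $\norm{u_j}_{L^\infty}$ bounds $f_j$ in a fixed $L^t$, because $M_{h_j}(u_j)\geq\int H_{h_j}$. Testing the equation with $u_j$ then bounds $\norm{u_j}_E$. Proposition~\ref{prop:functional}, through \eqref{eq:supstability} and the truncation argument, yields relative compactness in $E\cap C^0(\bOm)$. It therefore suffices to show $\sup u_j\to\infty$ is impossible. Passing to a subsequence, $f_j\dd x\weak\mu$, a measure on $\bOm$ of mass $\rho$.

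\emph{Concentration set.} A point $x_0\in\bOm$ is a blow-up point if $\mu(\{x_0\})\geq\varepsilon_0(x_0)$. The threshold is taken from \eqref{eq:BM} with Hölder's inequality: it is of order $c_N$ at ordinary points and $c_N\beta_i$ at $p_i$, using $H\in L^s$ with $s'$ close to $1/\theta_i$. Near a point of small mass, \eqref{eq:BM} applied to $u_j-\varphi_j$ on a small ball (or a boundary cap) gives $e^{u_j}$ bounded in some $L^a$, $a>1$. Since $\varphi_j$ is controlled by Lemma~\ref{lem:measure} and local boundedness, $u_j$ is locally bounded there. Hence the blow-up set $S$ is finite and nonempty, and $u_j$ is locally bounded on $\bOm\setminus S$. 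Lemma~\ref{lem:measure} gives a limit $U\geq0$ with $-\Delta_NU=\mu$.

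Write $\mu=bHe^U\dd x+\sum_{x\in S}m_x\delta_x$, where $b=\lim\rho_j/M_{h_j}(u_j)$. This requires that $M_{h_j}(u_j)$ stays bounded away from zero, which is automatic, and that $u_j\to U$ locally uniformly on $\bOm\setminus S$, which follows from the $C^{1,\eta}$ bounds.

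\emph{Boundary exclusion.} Suppose $x_0\in S\cap\partial\Om$. I would first show $m_{x_0}=0$ via a Pohozaev identity on a boundary cap $\Om\cap B_\delta(x_0)$. Multiply by $(x-y)\cdot\nabla u_j$ with $y$ outside $\bOm$ chosen so that $(x-y)\cdot\nu>0$ on the cap, a standard choice for a smooth domain. The boundary terms on $\partial\Om$ then have a sign, since $\nabla u_j=-|\nabla u_j|\nu$ there. The remaining terms on the interior sphere are controlled because $u_j\to U$ in $C^1$ there. The identity bounds $\int|\nabla u_j|^N(x-y)\cdot\nu$ on $\partial\Om\cap B_\delta$ by quantities that are $O(\delta)$ plus $o(1)$. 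A point mass at $x_0$ would instead force a contribution of order $m_{x_0}$. Alternatively one can use Lemma~\ref{lem:boundaryremoval}: once the atom is known to vanish, $U$ is regular at $x_0$, and the uniqueness step there identifies the limit, contradicting blow-up via the small-mass estimate. I expect this sign and the identification to be the delicate part.

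\emph{Interior atoms.} At $x_0\in S\cap\Om$ with mass $m$, the limit behaves like $U\sim (m/S_N)^{1/(N-1)}\ln(1/|x-x_0|)$; this follows from Serrin/Kichenassamy--Véron type asymptotics for $-\Delta_N$ with a Dirac mass. If $b>0$, the term $bHe^U$ would fail to be integrable whenever $(m/S_N)^{1/(N-1)}\geq N\theta_i$. The small-mass threshold guarantees exactly this lower bound on $m$. Since $\mu$ is finite, $b=0$, so there is no diffuse mass and $\mu=\sum m_x\delta_x$.

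Next apply the interior Pohozaev identity on $B_\delta(x_0)$ with field $x-x_0$. The boundary terms converge to those of $U$, which is now an $N$-Green-type function with a pure logarithmic leading term. This yields $m=c_N$ at ordinary points and $m=c_N\beta_i$ at $p_i$; the $|x-p_i|^{N\alpha_i}$ factor contributes $N\alpha_i\int f_j$ after integration by parts, and $h\in C^1$ controls the gradient term. Summing, $\rho=c_N(\ell+\beta_I)\in\Sigma$, a contradiction. The main obstacle is justifying the expansion of $\nabla U$ near $x_0$ sharply enough to pass to the limit in the Pohozaev boundary terms for the nonlinear operator.

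Compactness of the pairs $(\rho,u)$ for fixed $h$ then follows from the uniform bounds, the compactness of $K_{\rho,H}$, and closedness of the fixed-point relation.
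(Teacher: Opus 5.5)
Your outline follows the paper's architecture, but the boundary exclusion does not close as written. After $j\to\infty$ at fixed $\delta$, the interior-sphere terms of the cap identity are bounded by $C\delta\int_{\Om\cap\partial B_\delta(x_0)}(bHe^U+|\nabla U|^N)\dd S_x$. $C^1$ convergence at each fixed radius gives no control of this quantity as $\delta\downarrow0$. You only know $U\in W^{1,a}$ for $a<N$, and a priori a gradient of size $|x-x_0|^{-1}$ is compatible with that and makes the expression of order one. So your asserted $O(\delta)$ requires regularity of $U$ through $x_0$.

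That regularity is what Lemma~\ref{lem:boundaryremoval} provides. The missing observation is that the lemma applies \emph{before} anything is known about the atom. Since $x_0\in\partial\Om$, the atom is invisible in $\mathcal D'(D)$, so $U$ solves $-\Delta_NU=bH_{h_\infty}e^U\in L^1(D)$ with zero trace near $x_0$. The uniqueness theorem of \cite{DHM} for solutions below the energy class then identifies $U$ with the limit of the truncated energy solutions, and their exponential integrability bootstraps to $C^{1,\eta}$ regularity through $x_0$. With $|\nabla U|\leq C$ near $x_0$, the sphere terms are $O(\delta^N)$ and the identity gives $N\mu(\{x_0\})\leq0$.

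Your proposed alternative (``once the atom is known to vanish, $U$ is regular at $x_0$'') is circular: if the atom vanished, $x_0\notin S$ by definition and there would be nothing to prove. There is also a sign slip. For $y$ outside $\bOm$ near $x_0$ one has $(x-y)\cdot\nu<0$ on the cap, and that is the sign you need. The cap term $\frac{N-1}N\int|\nabla u_j|^N(x-y)\cdot\nu\dd S_x$ sits on the opposite side of the identity from $N\int f_j\dd x$, and it can be discarded only if it is nonpositive. With $y=x_0+CR^2\nu(x_0)$ this works, and it is a legitimate substitute for the paper's exact cancellation \eqref{eq:capcancel}.

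Two points in the interior analysis also need repair.

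First, the small-mass threshold you obtain from \eqref{eq:BM} applied to $u_j-\varphi_j$ carries the factor $d_N$, which is $<1$ for $N\geq3$. It only yields $m\geq d_NS_N(N\theta)^{N-1}$, i.e.\ a logarithmic coefficient $a\geq d_N^{1/(N-1)}N\theta$. That is not enough to make $|x-x_0|^{N\alpha-a}$ nonintegrable. The paper obtains the sharp bound \eqref{eq:atomlower} by comparing $v_j\leq z_j+C$ with the zero-trace solution on a ball, for which the full coefficient $a_N$ is available.

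Second, while $b>0$ the limit satisfies $-\Delta_NU=bHe^U+m\delta_{x_0}$. So $U$ is not $N$-harmonic off $x_0$, and \cite[Theorem 19]{EM} does not give its asymptotics. You need a comparison function: take zero-trace solutions $z_j\leq u_j$ with right-hand side $\mathbf1_{B_{r_j}(x_0)}f_j$, whose limit solves $-\Delta_NZ=m\delta_{x_0}$ exactly. Then apply Fatou to $\int_BHe^{z_j}\dd x\leq\rho_j/b_j\leq C$.

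The remaining steps agree with the paper: quantization via \eqref{eq:interiorpoho} and \eqref{eq:asymptotic}, the contradiction with $\Sigma$, and compactness of the pairs.
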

\begin{proof}
Suppose $\norm{u_j}_\infty\to\infty$. Extract $\rho_j\to\rho_\infty\in\Lambda$, $h_j\to h_\infty$ uniformly, and omit the terms $\rho_j=0$, for which $u_j=0$. Comparison gives $u_j\geq0$. Set
\[
b_j=\frac{\rho_j}{\int_\Om H_je^{u_j}\dd x},\quad v_j=u_j+\ln  b_j,\quad f_j=H_je^{v_j},\quad H_j=H_{h_j}.
\]
Then
\begin{equation}\label{eq:normalized}
-\Delta_Nu_j=-\Delta_Nv_j=f_j,\quad \int_\Om f_j\dd x=\rho_j,\quad
u_j|_{\partial\Om}=0,\quad v_j|_{\partial\Om}=\ln  b_j.
\end{equation}
Since $u_j\geq0$ and $\int_\Om H_j\dd x\geq c>0$, we have $0<b_j\leq\rho_j/\int_\Om H_j\dd x\leq C$. Extract $b_j\to b\geq0$ and $f_j\dd x\weak\mu$ on $\bOm$. The zero-boundary version of \eqref{eq:BM} gives, for some $\delta>0$,
\begin{equation}\label{eq:globalexp}
\sup_{j\in\N}\int_\Om e^{\delta u_j}\dd x<\infty,\quad 
\sup_{j\in\N}\norm{u_j}_{L^a}<\infty\quad(1\leq a<\infty).
\end{equation}
In particular $v_j^+$ is bounded in $L^N$.

Near $p_i$ write $H_j=|x-p_i|^{N\alpha_i}h_{i,j}$ with uniformly positive $C^1$ factors. Set $\theta=1$ at an ordinary point and $\theta=\theta_i$ at $p_i$. The small-mass condition
\begin{equation}\label{eq:smallmass}
\limsup_{j\to\infty}\int_D f_j\dd x<d_NS_N(N\theta)^{N-1}
\end{equation}
implies uniform upper bounds for $v_j$ and $u_j$ on a smaller set.
Here $D$ may be a small ball, or a smooth local domain agreeing with $\Om$ near a boundary point. To prove it, choose $a>1/\theta$ with
\[
a\Big(\limsup_{j\to\infty}\int_D f_j\dd x\Big)^{1/(N-1)}<d_N^{1/(N-1)}a_N.
\]
There exists $s>1$ for which $H_j$ is bounded in $L^s(D)$ and $1/s+1/a<1$. Let $\varphi_j$ be the $N$-harmonic function with the same boundary values as $v_j$. It lies below $v_j$ by comparison, so its positive part has bounded $L^N$ norm. The local upper bound \cite[Theorem 15]{EM} bounds $\varphi_j$ on a smaller ball. At a boundary point, $\varphi_j$ has constant trace $\ln  b_j$ on the actual boundary and is at least that constant by comparison. These are the local boundary hypotheses of \cite[Theorem 15]{EM}; its proof flattens and reflects only this boundary portion, so it imposes no condition on the remaining boundary outside the smaller set. The fixed upper bound on $\ln  b_j$ and the $L^N$ bound on the positive part give a uniform local upper estimate. Estimate \eqref{eq:BM} bounds $e^{v_j}$ in $L^a$ locally. H\"older's inequality bounds $f_j$ in some $L^t$, $t>1$. The local estimate cited above bounds $v_j^+$, and \eqref{eq:globalexp} bounds $u_j$ on a further smaller set.

Choose $\eta_0>0$ smaller than all the right-hand thresholds in \eqref{eq:smallmass}, and set
\[
S=\{x\in\bOm:\mu(\{x\})\geq\eta_0\}.
\]
The set $S$ is finite because $\mu$ has finite mass. At any other point, continuity from above gives a neighborhood with mass strictly below $\eta_0$; choose its spherical boundary to have zero $\mu$-mass so that \eqref{eq:smallmass} applies to the approximating measures. Conversely, a positive point mass forces $v_j$ to be unbounded above in every neighborhood, since a local upper bound would imply $f_j\leq CH_j$ and uniform integrability. Thus $S$ is exactly the concentration set. On compact subsets of $\bOm\setminus(S\cup P)$ the right-hand sides are bounded, and
\begin{equation}\label{eq:C1away}
u_j\to  U\quad\text{ in }\, C^1_{loc}(\bOm\setminus(S\cup P)),
\end{equation}
with uniform local $C^{1,\eta}$ bounds. Lemma~\ref{lem:measure}, with $\varphi=0$, also gives strong $W^{1,a}$ convergence for every $a<N$ and strong $L^1$ convergence of $|\nabla u_j|^{N-2}\nabla u_j$.

For an interior $p\in S$ the concentration-mass bound improves to
\begin{equation}\label{eq:atomlower}
\mu(\{p\})\geq S_N(N\theta)^{N-1},\quad 
\theta=\left\{\begin{aligned}
    &1+\alpha_i&&\text{ if }\, p=p_i,\\&1&& \text{ if }\, p\notin P.
\end{aligned}\right.
\end{equation}
Choose a ball about $p$ containing no other point of $S\cup P$. Its boundary has a uniform upper bound for $v_j$. The zero-boundary solution $z_j$ with right-hand side $f_j$ there satisfies $v_j\leq z_j+C$. If \eqref{eq:atomlower} failed, the sharper coefficient $a_N$ in \eqref{eq:BM} would allow $a>1/\theta$ on a small ball. The preceding H\"older's inequality and local boundedness argument would bound $v_j$ at $p$, a contradiction.

We next exclude $p\in S\cap\partial\Om$. Choose a smooth local domain $D$ agreeing with $\Om$ near $p$, with $\overline{\partial D\cap\Om}$ disjoint from $S\cup P$ and $\overline D$ containing no other point of $S\cup P$. The weight is uniformly positive and $C^1$ on this neighborhood. The limit $U$ has zero trace on its boundary portion in $\partial\Om$, belongs to every $W^{1,a}(D)$, $a<N$, and satisfies
\[
-\Delta_NU=bH_{h_\infty}e^U:=F\quad\text{ in }\, \mathcal D'(D).
\]
Fatou's theorem gives $F\in L^1(D)$. The point $p\in\partial D$ contributes no interior distribution. Lemma~\ref{lem:boundaryremoval} gives $C^{1,\eta}$ regularity through $p$.

Choose $R_0>0$ so that $\overline\Om\cap\overline B_{2R_0}(p)$ contains no point of $(S\setminus\{p\})\cup P$ and $U$ is $C^{1,\eta}$ on this closed neighborhood. For a transverse radius $0<R<R_0$, write
\[
D_R=\Om\cap B_R(p),\quad 
\Gamma_R=\partial\Om\cap B_R(p),\quad 
S_R=\Om\cap\partial B_R(p).
\]
All constants below are independent of $j$ and small $R$, except the bounds used to smooth the edge, which are used only at fixed $j,R$. Set
\[
t_{j,R}=\frac{\int_{\Gamma_R}(x-p)\cdot\nu(x)|\nabla u_j|^N\dd S_x}
{\int_{\Gamma_R}\nu(p)\cdot\nu(x)|\nabla u_j|^N\dd S_x},
\quad  y_{j,R}=p+t_{j,R}\nu(p).
\]
If $\int_{\Gamma_R}|\nabla u_j|^N\dd S_x=0$, set $t_{j,R}=0$. For small $R$, $\nu(p)\cdot\nu(x)>1/2$; thus in that exceptional case $|\nabla u_j|=0$ almost everywhere on $\Gamma_R$. For $x\in\Gamma_R$, the boundary graph expansion gives $|(x-p)\cdot\nu(x)|\leq CR^2$. Consequently $|t_{j,R}|\leq CR^2$ in both cases. In either case,
\begin{equation}\label{eq:capcancel}
\int_{\Gamma_R}(x-y_{j,R})\cdot\nu(x)|\nabla u_j|^N\dd S_x=0.
\end{equation}

To apply Pohozaev on $D_R$, fix $j$ and a transverse $R$. Smooth its codimension-two edge in a strip of width $\varepsilon$, leaving the two boundary portions unchanged elsewhere. Local product coordinates give an added boundary area $O_R(\varepsilon)$; the removed boundary strips also have area tending to zero. Since the cap avoids $P$, the fixed functions $u_j\in C^{1,\eta}$ and $H_j\in C^1$ make all boundary integrands bounded. Applying \cite[Lemma 6]{EM} with center $y_{j,R}$ and letting $\varepsilon\downarrow0$ therefore gives the identity on the original cap, with no edge term. This passage requires no bound uniform in $j$; it precedes the limits $j\to\infty$ and then $R\downarrow0$.

For $y=y_{j,R}$ define the stress contribution
\[
\mathscr S_j(x,y)=|\nabla u_j|^{N-2}
 ((x-y)\cdot\nabla u_j)\partial_\nu u_j
 -\frac1N|\nabla u_j|^N(x-y)\cdot\nu.
\]
The Pohozaev identity on $D_R$ reads
\begin{equation}\label{eq:capidentity}
\int_{D_R}\bigl[N+(x-y)\cdot\nabla\ln  H_j\bigr]f_j\dd x
=\int_{\Gamma_R\cup S_R}
 \bigl[f_j(x-y)\cdot\nu+\mathscr S_j(x,y)\bigr]\dd S_x.
\end{equation}
On $\Gamma_R$, zero trace and $C^1$ regularity give
$\nabla u_j=(\partial_\nu u_j)\nu$, and hence
\[
\mathscr S_j(x,y)=\frac{N-1}{N}|\nabla u_j|^N(x-y)\cdot\nu.
\]
The integral of $\mathscr S_j$ over $\Gamma_R$ vanishes by \eqref{eq:capcancel}; no bound on the boundary gradients uniform in $j$ is needed. On $\Gamma_R$, $f_j=b_jH_j\leq C$ and $|(x-y)\cdot\nu|\leq CR^2$, so the integral of $f_j(x-y)\cdot\nu$ is $O(R^{N+1})$. On $S_R$, $|x-y|\leq CR$ bounds the absolute value of the remaining integrands. Since $|\nabla\ln  H_j|\leq C$ in the collar, \eqref{eq:capidentity} yields
\begin{equation}\label{eq:capestimate}
(N-CR)\int_{D_R}f_j\dd x
\leq CR^{N+1}+CR\int_{S_R}(f_j+|\nabla u_j|^N)\dd S_x.
\end{equation}

We take the mass limit as measures on $\bOm$. Specifically,
\[
\int_{D_R}f_j\dd x=(f_j\dd x)(\bOm\cap B_R(p))
 \to \mu(\bOm\cap B_R(p))
\]
whenever $\mu(\bOm\cap\partial B_R(p))=0$. The equality holds because each approximating measure gives zero mass to $\partial\Om$; the limit on the right retains a possible concentration mass at $p$. The relative boundary of this continuity set in $\bOm$ lies on $\partial B_R(p)$, not on $\Gamma_R$. Excluding the at most countably many radii with positive spherical mass, and the nontransverse radii, leaves radii tending to zero.

For each such fixed $R$, \eqref{eq:C1away} gives $C^1$ convergence on the closure of $S_R$, including its edge, and $f_j\to bH_{h_\infty}e^U$ uniformly there. The regularity of $U$ through $p$ bounds this limit and $|\nabla U|$ by constants independent of small $R$. Since $|S_R|\leq CR^{N-1}$, passage to $j\to\infty$ in \eqref{eq:capestimate} gives
\[
(N-CR)\mu(\bOm\cap B_R(p))\leq CR^N.
\]
Let $R\downarrow0$ along these radii. Continuity from above of the finite measure $\mu$ yields $N\mu(\{p\})\leq0$, a contradiction. Thus $S\subset\Om$. The argument has used the order: smooth the edge at fixed $j,R$, take $j\to\infty$ at fixed $R$, and finally take $R\downarrow0$.

If $S=\varnothing$, a finite cover using \eqref{eq:smallmass} gives a global $L^t$ bound for the right-hand sides, $t>1$, and bounds $u_j$, contrary to our choice. Hence $S\ne\varnothing$. We claim $b=0$. If $b>0$, fix $p\in S$ and $\sigma=\mu(\{p\})$. A diagonal choice $r_j\downarrow0$ gives
\[
g_j:=\mathbf1_{B_{r_j}(p)}f_j,\quad g_j\dd x\weak\sigma\delta_p,\quad \int_\Om g_j\dd x\to\sigma.
\]
On a ball $B$ containing no other point of $S\cup P$, let $z_j$ have right-hand side $g_j$ and zero boundary values. Comparison gives $0\leq z_j\leq u_j$. Lemma~\ref{lem:measure} and local regularity yield a nonnegative limit $Z$ with $-\Delta_NZ=\sigma\delta_p$. The isolated-singularity result \cite[Theorem 19]{EM}, which combines the results of Serrin \cite{Serrin} and Kichenassamy--V\'eron \cite{KV}, applies: $Z$ is nonnegative and $N$-harmonic off $p$, and its distributional mass determines the coefficient. Thus
\[
Z(x)=a\ln \frac1{|x-p|}+O(1),\quad  a=(\sigma/S_N)^{1/(N-1)}\geq N\theta.
\]
But
\[
\int_B H_je^{z_j}\dd x\leq\int_\Om H_je^{u_j}\dd x=\rho_j/b_j\leq C.
\]
Fatou's theorem would make the limiting integrand integrable, whereas near $p$ it is bounded below by a positive multiple of $|x-p|^{N\alpha-a}$, nonintegrable for $a\geq N(1+\alpha)$. Here $\alpha=\alpha_i$ at a point of $P$ and $0$ otherwise. Hence $b=0$.

The local bounds now show $v_j\to-\infty$ locally uniformly on $\bOm\setminus S$. At a point of $P$ outside $S$ this follows from the local $L^t$ estimate for the right-hand side, without gradient regularity. Therefore
\begin{equation}\label{eq:atomic}
\mu=\sum_{p\in S}\sigma_p\delta_p,\quad  \sigma_p>0,
\end{equation}
with no boundary loss. Strong $L^1$ convergence of $|\nabla u_j|^{N-2}\nabla u_j$ gives $-\Delta_NU=\sum_{p\in S}\sigma_p\delta_p$ in distributions. To extend the $N$-harmonicity through a point $p_i\notin S$, use a cutoff $\zeta$ in $B_{2s}(p_i)\Subset\Om\setminus S$ and test with $\zeta^Nu_j$. The local bound for $u_j$ and Young's inequality with conjugate exponents $q$ and $N$ give
\[
\int_{B_{2s}(p_i)}\zeta^N|\nabla u_j|^N\dd x\leq C\int_{B_{2s}(p_i)}f_j\dd x+C\int_{B_{2s}(p_i)}|\nabla\zeta|^N\dd x.
\]
Thus $U\in W^{1,N}(B_s(p_i))$. Also $f_j=b_jH_je^{u_j}\to0$ in some $L^t$, $t>1$. Local monotonicity, tested against a cutoff times $u_j-U$, identifies the limit across $p_i$. Hence $U$ is $N$-harmonic on $\Om\setminus S$.

Applying \cite[Theorem 19]{EM} to the nonnegative function $U$ gives, near $p\in S$,
\begin{equation}\label{eq:asymptotic}
U(x)=a_p\ln \frac1{|x-p|}+O(1),\quad
\sigma_p=S_Na_p^{N-1},\quad
|x-p|\nabla U(x)=-a_p\frac{x-p}{|x-p|}+o(1).
\end{equation}
The asymptotics in \eqref{eq:asymptotic} concern a single nonlinear solution, not a superposition of Green functions.

Choose $B_R(p)$ avoiding the other points of $P$ and concentration points and write $H_j=|x-p|^{N\alpha}h_{p,j}$, with uniformly positive $C^1$ factors. For fixed $j$, apply \cite[Lemma 6]{EM} on $B_R(p)\setminus\overline B_\varepsilon(p)$. Since $u_j\in W^{1,N}$ and $f_j\in L^1$, there are $\varepsilon_k\downarrow0$ with
\[
\varepsilon_k\int_{\partial B_{\varepsilon_k}(p)}(|\nabla u_j|^N+f_j)\dd S_x\to 0.
\]
Otherwise radial integration would diverge logarithmically. All inner-boundary terms are bounded by this expression. The identity $(x-p)\cdot\nabla\ln  H_j=N\alpha+(x-p)\cdot\nabla\ln  h_{p,j}$ makes the volume term integrable. Passing to the puncture gives
\begin{align}
  &\,\int_{B_R(p)}[N(1+\alpha)+(x-p)\cdot\nabla\ln  h_{p,j}]f_j\dd x\nonumber\\
=&\,R\int_{\partial B_R(p)}\Big[f_j+|\nabla u_j|^{N-2}(\partial_\nu u_j)^2-\frac1N|\nabla u_j|^N\Big]\dd S_x.  \label{eq:interiorpoho}
\end{align}
In \eqref{eq:interiorpoho}, first let $j\to\infty$ at fixed $R$, and then let $R\downarrow0$. The coefficient error is bounded by $CR\int_{B_R(p)}f_j\dd x$, and $f_j\to0$ on the outer sphere. Equation \eqref{eq:asymptotic} gives
\[
N(1+\alpha)\sigma_p=\frac{N-1}{N}S_Na_p^N,\quad  \sigma_p=S_Na_p^{N-1}.
\]
Thus
\begin{equation}\label{eq:quantization}
a_p=\gamma_N(1+\alpha),\quad  \sigma_p=c_N(1+\alpha)^{N-1}.
\end{equation}
The concentration mass in \eqref{eq:atomic} was defined before rescaling, so \eqref{eq:quantization} determines its complete mass, including collapsed clusters and intermediate regions.

By \eqref{eq:normalized}, measure convergence on $\bOm$ gives $\mu(\bOm)=\rho_\infty$. Equations \eqref{eq:atomic}--\eqref{eq:quantization} imply $\rho_\infty=c_N(\ell+\beta_I)\in\Sigma$, a contradiction. Hence $\norm{u}_{L^\infty(\Om)}$ is uniformly bounded. Testing with $u$ gives $\norm u_E^N\leq\rho\norm{u}_{L^\infty(\Om)}$. Proposition~\ref{prop:functional} gives precompactness in $E$, and its truncation argument for the right-hand sides gives precompactness in $C^0(\bOm)$, since the right-hand sides are uniformly bounded in some $L^s$, $s>1$. Passing to the equation gives closedness for fixed $h$.
\end{proof}

\subsection{Low-sublevel concentration}
\begin{lemma}\label{lem:improvedMT}
Let $s\in\N$, and let $A_1,\ldots,A_s$ have pairwise disjoint relative open neighborhoods $U_j$ in $\bOm$, with smooth cutoffs equal to one on $A_j$ and supported in $U_j$. Each $\overline U_j$ either avoids $P$, with cost $\omega_j=1$, or meets $P$ at exactly one point $p_i\in U_j$, with cost $\omega_j=\beta_i$. Set $W=\sum_{j=1}^s\omega_j$. For every $\varepsilon,\eta>0$, there exists a constant $C>0$ such that
\[
\ln  M(u)\leq\Big(\frac1{c_NW}+\varepsilon\Big)Q(u)+C
\]
whenever $u\in E$ and $\int_{A_j}He^u\dd x\geq\eta M(u)$ for $1\leq j\leq s$. Boundary neighborhoods are allowed.
\end{lemma}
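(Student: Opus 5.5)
This is the Chen--Li type improved Moser--Trudinger inequality, adapted to the $N$-Laplacian and to the singular weights. The plan is to localize the energy with the cutoffs and apply a local singular Moser--Trudinger inequality on each $U_j$. We then pick the piece carrying the least energy, which has at most a fraction $\omega_j/W$ of the total.

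\emph{Local inequality.} Let $\zeta_j$ be the given cutoffs, $\zeta_j=1$ on $A_j$ and $\supp\zeta_j\subset U_j$. Since the $U_j$ are disjoint, the supports of $\nabla\zeta_j$ are disjoint. We first show that for every $\varepsilon'>0$ there is $C$ with
\[
\ln\int_{\Om}H e^{w}\dd x\le\Big(\frac1{c_N\omega_j}+\varepsilon'\Big)Q(w)+C
\]
for all $w\in E$ supported in $\overline U_j\cap\bOm$ (zero trace on $\partial\Om$). If $\overline U_j$ avoids $P$, $H$ is bounded there and \eqref{eq:MT} gives this with $\omega_j=1$. If $p_i\in U_j$, then $H\le C|x-p_i|^{N\alpha_i}$ on $U_j$. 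Hölder's inequality with exponent $s<-1/\alpha_i$ (any $s$ if $\alpha_i=0$), followed by \eqref{eq:MT} applied to $s'w$, gives the coefficient $(s')^{N-1}/c_N$. This tends to $1/(c_N\beta_i)$ as $s\uparrow-1/\alpha_i$, exactly as at the end of the proof of Proposition~\ref{prop:functional}. Localizing is legitimate because $U_j$ contains only one point of $P$.

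\emph{Splitting the energy.} Given $u$, set $\bar u=\max(u,0)$. It suffices to treat $u\ge0$ up to a bounded error, since $\ln M(u)\le\ln M(u^+)$ and $Q(u^+)\le Q(u)$; the concentration hypothesis is only needed in weakened form for $u^+$, and one can instead keep $u$ and bound $M$ through $e^{u}\le e^{u^+}$. Pick $j$ minimizing $\int_{U_j}|\nabla u|^N/\omega_j$. Then $\int_{U_j}|\nabla u|^N\le (\omega_j/W)\int_\Om|\nabla u|^N$. Using $\int_{A_j}He^u\ge\eta M(u)$ gives $\ln M(u)\le\ln\int_\Om He^{\zeta_ju}\dd x+\ln(1/\eta)$. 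For $\zeta_ju$, the elementary inequality $|a+b|^N\le(1+\tau)|a|^N+C_\tau|b|^N$ gives
\[
Q(\zeta_ju)\le(1+\tau)\frac1N\int_{U_j}|\nabla u|^N\dd x+C_\tau\int_\Om|u|^N\dd x.
\]
Combining these with the local inequality yields
\[
\ln M(u)\le\Big(\frac1{c_NW}+\varepsilon\Big)Q(u)+C_\tau\norm u_{L^N}^N+C.
\]

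\emph{Main obstacle: the lower-order term.} The term $\norm u_{L^N}^N$ is not controlled by $\varepsilon Q(u)$ plus a constant, and it is homogeneous of degree $N$ rather than degree one. I would remove it by the standard truncation device. Choose $\lambda$ with $|\{u>\lambda\}|$ small (fixed by $\varepsilon$), split $u=\min(u,\lambda)+(u-\lambda)^+$, and note that $e^u\le e^{\lambda}e^{(u-\lambda)^+}$. The cutoff argument is applied to $(u-\lambda)^+$, whose $L^N$ norm is at most $\varepsilon_1\norm{\nabla u}_{L^N}$ by Hölder and Sobolev on a set of small measure. The constant $\lambda$ itself is bounded by an $L^1$ average, $\lambda\le C\norm u_{L^1}/\delta\le\varepsilon Q(u)+C$ by Young's inequality. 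The degree-$N$ issue is handled by the homogeneity trick: run the argument for $tu$ and optimize, or apply Young's inequality $\varepsilon_1^NQ\cdot(\text{const})$ together with the estimate for $\ln$. The delicate point is to check that the concentration hypothesis $\int_{A_j}He^{(u-\lambda)^+}\ge\eta'M$ survives the truncation with $\eta'$ comparable to $\eta e^{-\lambda}\cdot$const. I expect this to work because both sides scale by $e^{\lambda}$ up to the bounded set $\{u\le\lambda\}$, whose contribution to $M(u)$ is at most $Ce^{\lambda}$. If the mass is dominated by that bounded set, then $\ln M(u)\le\lambda+C$ and we are already done.

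Boundary neighborhoods need no change, since all functions vanish on $\partial\Om$ and \eqref{eq:MT} holds for zero extensions.
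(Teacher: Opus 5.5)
Your argument is correct and is essentially the paper's proof. It uses the same local singular inequality from H\"older's inequality and the sharp Moser--Trudinger inequality, and the same disjoint cutoffs. It truncates at a level of order $\norm u_E$ (the paper takes $T=A\norm u_E$), so that the $L^N$ remainder becomes a small multiple of $Q(u)$, and then absorbs that level by Young's inequality. Choosing the index that minimizes $\int_{U_j}|\nabla u|^N\dd x/\omega_j$ is equivalent to the paper's summation of the $\omega_j$-weighted inequalities.

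The worries in your last paragraph are unnecessary. On $A_j$ one has $u\leq\lambda+\zeta_j(u-\lambda)^+$, which gives $\eta M(u)\leq e^{\lambda}\int_\Om He^{\zeta_j(u-\lambda)^+}\dd x$ directly. After the truncation, no degree-$N$ term remains that would need a homogeneity argument.
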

\begin{proof}
For a neighborhood $U$ of the stated kind and $z\in E$, H\"older's inequality and \eqref{eq:MT} give, for every $\delta>0$,
\begin{equation}\label{eq:localMT}
\ln \int_U He^z\dd x\leq\Big(\frac1{c_N\omega}+\delta\Big)Q(z)+C_\delta.
\end{equation}
For an ordinary neighborhood $H$ is bounded and $\omega=1$. At a point $p_i$ with $\alpha_i<0$, write $H=h_i|x-p_i|^{N\alpha_i}$, choose $1<s<-1/\alpha_i$, and apply \eqref{eq:MT} to $s'z$. The coefficient $(s')^{N-1}/c_N$ approaches $1/(c_N\beta_i)$. Points with zero exponent impose no restriction. Zero extension across the boundary proves the same statement there.

No Hilbert-space splitting is needed for localization. Set $R=\norm u_E$, $T=AR$, and $w=(u-T)^+$, where $A>1$. If $R=0$ the assertion is immediate. The $L^{2N}$ Sobolev embedding gives
\[
\norm w_{L^N}^N\leq\int_{\{x\in\Om:u(x)>T\}}|u|^N\dd x\leq T^{-N}\norm u_{L^{2N}}^{2N}\leq CA^{-N}R^N.
\]
For the prescribed cutoffs $\zeta_j$, $u\leq T+\zeta_jw$ on $A_j$ and
\[
\sum_{j=1}^sQ(\zeta_jw)\leq(1+\delta_1)Q(u)+C_{\delta_1}A^{-N}Q(u),
\]
by disjointness of their supports. The mass fractions and \eqref{eq:localMT} imply
\[
\omega_j\ln  M(u)\leq(1/c_N+\delta\omega_j)Q(\zeta_jw)+\omega_jAR+C.
\]
Sum, choose $\delta,\delta_1$ small and then $A$ large, and use Young's inequality with conjugate exponents $N$ and $q$ to bound $AR$ by an arbitrarily small multiple of $R^N$ plus a constant. Division by $W$ proves the estimate, with all choices independent of $u$.
\end{proof}

For a compact set $P\subset Y\subset\bOm$, recall the space $\Br(Y)$ from \eqref{eq:barycenters}.
For fixed $r$ with $c_Nr\notin\Sigma$, only finitely many admissible costs occur. The number of support points is bounded, and cost cannot increase under collision or disappearance, since $\beta_i\leq1$. Thus $\Br(Y)$ is compact. The weak topology is metrized by $d_{\mathrm{BL}}$ in \eqref{eq:BLdistance}.
\begin{lemma}\label{lem:concentration}
For nonresonant $\rho>0$, set $J=J_{\rho,H}$, $r=\rho/c_N$, and $\mu_u=He^u\dd x/M(u)$. If $\Br(\bOm)\ne\varnothing$, then
\[
\sup_{J(u)\leq-L}\dist_{\mathrm{BL}}(\mu_u,\Br(\bOm))\to 0\quad(L\to\infty).
\]
If $\Br(\bOm)=\varnothing$, $J$ is coercive and bounded below.
\end{lemma}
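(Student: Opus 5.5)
We argue by contradiction, using the improved inequality of Lemma~\ref{lem:improvedMT} in the standard Malchiodi--Carlotto--Malchiodi fashion. Suppose there are $\epsilon_0>0$ and $u_k\in E$ with $J(u_k)\to-\infty$ and $\dist_{\mathrm{BL}}(\mu_{u_k},\Br(\bOm))\geq\epsilon_0$. Since $J(u_k)\to-\infty$ and $B(u)\leq C+Q(u)/(c_N\beta_*)$ by Proposition~\ref{prop:functional}, we get $Q(u_k)\to\infty$, and $\rho\ln M(u_k)\geq Q(u_k)+L_k$ with $L_k\to\infty$. Because $\bOm$ is compact, we pass to a subsequence with $\mu_{u_k}\weak\mu\in\Prob(\bOm)$. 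The weak topology is metrized by $d_{\mathrm{BL}}$, so $\dist_{\mathrm{BL}}(\mu,\Br(\bOm))\geq\epsilon_0$. Thus it suffices to show that every such limit $\mu$ lies in $\Br(\bOm)$. That is, $\mu$ should be finitely supported with $\card_\beta(\mu)<r$.

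The key step is a covering claim. Suppose $\supp\mu$ contains finitely many points, or infinitely many points, from which one can select disjoint compact sets $A_1,\dots,A_s$ with
\[
W=\sum_j\omega_j>r .
\]
Here each $A_j$ is a small closed ball around a chosen support point, taken relative to $\bOm$, and $\omega_j$ is its cost: $\beta_i$ if the point is $p_i$, and $1$ otherwise. We take the neighborhoods $U_j$ small enough that each $\overline U_j$ meets $P$ in at most the center, and only when the center is in $P$. If the center is an ordinary point, $U_j$ avoids $P$, so its cost is $1$. Since the centers lie in $\supp\mu$, we have $\mu(\operatorname{int}A_j)>2\eta$ for some $\eta>0$. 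Weak convergence then gives $\int_{A_j}He^{u_k}\geq\eta M(u_k)$ for large $k$. Lemma~\ref{lem:improvedMT} yields
\[
\ln M(u_k)\leq\big(\tfrac{1}{c_NW}+\varepsilon\big)Q(u_k)+C,
\]
and hence
\[
J(u_k)\geq\Big(1-\rho\big(\tfrac1{c_NW}+\varepsilon\big)\Big)Q(u_k)-C .
\]
Choosing $\varepsilon$ so that $r(1/W+c_N\varepsilon)<1$ makes the coefficient positive, so $J(u_k)\to+\infty$. This contradicts $J(u_k)\to-\infty$.

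It remains to show that this claim forces $\mu\in\Br(\bOm)$. If $\supp\mu$ were infinite, we could pick $\lceil r\rceil+1$ ordinary support points. This works because $P$ is finite, so at most finitely many support points lie in $P$. Their total cost exceeds $r$, which is excluded. So $\supp\mu$ is finite. Taking all of its points, we get $\card_\beta(\mu)\leq r$.

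The main subtlety is the equality case, and nonresonance handles it. Equality $\card_\beta(\mu)=r$ would mean $c_Nr=c_N(\ell+\beta_I)\in\Sigma$, which is excluded. Hence $\card_\beta(\mu)<r$, so $\mu\in\Br(\bOm)$, a contradiction. This is where nonresonance enters; the compactness of $\Br$ is not needed here beyond the distance being well defined.

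For the empty case, $\Br(\bOm)=\varnothing$ means that every single Dirac mass has cost at least $r$. By nonresonance this is strict, so $r<\min\{1,\beta_1,\dots,\beta_m\}=\beta_*$, that is, $\rho<c_N\beta_*$. Coercivity and boundedness below then follow directly from Proposition~\ref{prop:functional}.
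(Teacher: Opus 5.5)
Your argument is correct and is essentially the paper's proof. Both apply Lemma~\ref{lem:improvedMT} to disjoint neighborhoods of finitely many support points whose total cost exceeds $r$: many ordinary points handle infinite support, and nonresonance excludes $\card_\beta(\mu)=r$ for finite support. Both then close with compactness of $\Prob(\bOm)$, the paper via a finite subcover that gives a uniform lower bound for $J$ on the excluded set, and you via a weakly convergent subsequence and contradiction, which is equivalent. One harmless imprecision: Proposition~\ref{prop:functional} only gives $B(u)\leq C_\varepsilon+(1/(c_N\beta_*)+\varepsilon)Q(u)$, not the exact coefficient, but you never need $Q(u_k)\to\infty$, since $J(u_k)\geq -C$ already contradicts $J(u_k)\to-\infty$.
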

\begin{proof}
For fixed $\varepsilon_0>0$, the measures at distance at least $\varepsilon_0$ from $\Br(\bOm)$ form a compact set. Each has finitely many distinct support points of total cost greater than $r$: choose sufficiently many ordinary points for infinite support, and use nonmembership and nonresonance for finite support. Choose disjoint relative balls about these points, with larger neighborhoods and cutoffs as in Lemma~\ref{lem:improvedMT}. Their masses exceed $2\eta_\nu>0$. The conditions $\mu(A_j)>\eta_\nu$ define a weakly open neighborhood because $A_j$ is relatively open. A finite subcover provides finitely many mass fractions and geometries. For each choose $\varepsilon$ so that $\rho(1/(c_NW)+\varepsilon)<1$. Lemma~\ref{lem:improvedMT} then bounds $J$ below uniformly on the excluded set. If the barycenter space is empty, $r<\beta_*$ and Proposition~\ref{prop:functional} gives coercivity.
\end{proof}

\section{Deformation and degree}\label{sec:deformation}
Fix $h$ and write $J_\lambda=Q-\lambda B$, with $B=\ln  M$. All deformations and degree comparisons take place in $E$. Theorem~\ref{thm:compact} gives
\begin{equation}\label{eq:familycompact}
\{(\lambda,u)\in I\times E:J_\lambda'(u)=0\}\text{ compact in }I\times E
\end{equation}
for every compact interval $I\Subset(0,\infty)\setminus\Sigma$. Set $J_\lambda^s=\{J_\lambda\leq s\}$.

\subsection{Orbitwise deformations}
\begin{lemma}\label{lem:Splus}
The derivative $B':E\to E^*$ sends weakly convergent sequences to strongly convergent sequences, $A=Q'$ is a homeomorphism, and $K_\lambda=A^{-1}(\lambda B')$ is compact on bounded sets. Moreover $J_\lambda'$ has the $(S)_+$ property: if $u_j\weak u$ and $\limsup_{j\to\infty}\pair{J_\lambda'(u_j)}{u_j-u}\leq0$, then $u_j\to u$ in $E$. A bounded sequence with $\lambda_j$ in a compact interval of $(0,\infty)$ and $J_{\lambda_j}'(u_j)\to0$ has a strongly convergent subsequence to a critical point for a limiting parameter.
\end{lemma}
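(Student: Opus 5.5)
The plan is to assemble Lemma~\ref{lem:Splus} from Proposition~\ref{prop:functional} and the monotonicity inequality \eqref{eq:monotone}, handling the four assertions in order.

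\emph{The first three assertions.} We have $B'=\Gcal_h$, because $B=\ln M$ and $M'(u)v=\int_\Om He^uv\dd x$. In the proof of Proposition~\ref{prop:functional} it was shown that $u_j\weak u$ implies $H e^{u_j}\to He^u$ in some $L^t$ with $t>1$, together with $M(u_j)\to M(u)>0$. The embedding $E\hookrightarrow L^{t'}$ then gives $\Gcal_h(u_j)\to\Gcal_h(u)$ in $E^*$. The homeomorphism property of $A$ and the compactness of $K_\lambda=A^{-1}(\lambda B')$ on bounded sets are stated in Proposition~\ref{prop:functional}, so they can be quoted directly.

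\emph{The $(S)_+$ property.} Let $u_j\weak u$ and write
\[
\pair{J_\lambda'(u_j)}{u_j-u}=\pair{A(u_j)}{u_j-u}-\lambda\pair{B'(u_j)}{u_j-u}.
\]
Since $B'(u_j)\to B'(u)$ strongly and $u_j-u\weak0$, the second term tends to zero. Hence $\limsup_j\pair{A(u_j)}{u_j-u}\le0$. Because $A(u)$ is a fixed element of $E^*$, we also have $\pair{A(u)}{u_j-u}\to0$. Then \eqref{eq:monotone} gives
\[
d_N\norm{u_j-u}_E^N\le\pair{A(u_j)-A(u)}{u_j-u},
\]
and the right-hand side has $\limsup\le0$. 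Therefore $u_j\to u$ in $E$.

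\emph{The Palais--Smale type statement.} Take $u_j$ bounded, $\lambda_j\in I$ compact in $(0,\infty)$, and $J_{\lambda_j}'(u_j)\to0$. Pass to a subsequence with $u_j\weak u$ and $\lambda_j\to\lambda$. One route is to write
\[
A(u_j)=\lambda_jB'(u_j)+o(1).
\]
The right-hand side converges strongly to $\lambda B'(u)$, and continuity of $A^{-1}$ then gives $u_j\to A^{-1}(\lambda B'(u))$ strongly. Uniqueness of the weak limit forces this limit to be $u$. Continuity of $A$ and $B'$ then gives $J_\lambda'(u)=0$. Alternatively, one can apply the $(S)_+$ argument directly, since $\pair{J'_{\lambda_j}(u_j)}{u_j-u}\to0$ by boundedness; the parameter variation only contributes $(\lambda_j-\lambda)\pair{B'(u_j)}{u_j-u}\to0$.

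The only step requiring care is the uniform $L^t$ convergence of $He^{u_j}$ with singular $H$, and Proposition~\ref{prop:functional} already supplies it. No genuine obstacle is expected; the argument is routine bookkeeping.
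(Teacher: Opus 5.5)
Your proposal is correct and follows the paper's proof essentially step by step. The first assertions are quoted from Proposition~\ref{prop:functional}. The $(S)_+$ property comes from compactness of $B'$ together with \eqref{eq:monotone} after subtracting $\pair{A(u)}{u_j-u}\to0$. The last assertion uses $A(u_j)=\lambda_jB'(u_j)+o(1)$ and continuity of $A^{-1}$. Your alternative route through $(S)_+$, using the parameter correction $(\lambda_j-\lambda)\pair{B'(u_j)}{u_j-u}\to0$, is also valid.
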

\begin{proof}
The first assertions are Proposition~\ref{prop:functional}. The compactness of $B'$ makes its term in the $(S)_+$ pairing tend to zero. Subtract $\pair{A(u)}{u_j-u}\to0$ and apply \eqref{eq:monotone}. For the last assertion extract $\lambda_j\to\lambda$ and $u_j\weak u$; then $A(u_j)=\lambda_jB'(u_j)+o(1)$ converges strongly in $E^*$ and continuity of $A^{-1}$ concludes.
\end{proof}

Excision and \eqref{eq:familycompact} make $d_\rho$ well defined, constant on nonresonant components, and invariant under bounded positive $C^1$ homotopies of $h$. No global $C^1$ regularity at the points of $P$ is required.

\begin{lemma}\label{lem:commonfield}
Let $I=[\lambda_-,\lambda_+]\Subset(0,\infty)$ and $D\subset E$ be open, with $J_\lambda'(u)\ne0$ for every $(\lambda,u)\in I\times D$. There exists a locally Lipschitz field $W:D\to E$ such that, for every $\lambda\in I$,
\begin{equation}\label{eq:commonfield}
\norm{W(u)}_E\leq1,\quad J_\lambda'(u)W(u)\leq-\tfrac14m(u),\quad
m(u)=\min_{\lambda\in I}\norm{J_\lambda'(u)}_{E^*}>0.
\end{equation}
For every set $C\subset D$ closed in $E$ and every $R<\infty$, $m$ has a positive lower bound on $C\cap\overline B_R^E$ if this set is nonempty.
\end{lemma}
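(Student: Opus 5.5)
The plan is a standard pseudo-gradient construction, run with the minimum over $\lambda$ instead of a single functional. The first step is continuity. The map $(\lambda,u)\mapsto J_\lambda'(u)=A(u)-\lambda B'(u)$ is jointly continuous on $I\times E$, because $A$ and $B'$ are continuous by Proposition~\ref{prop:functional}. It is also affine in $\lambda$. Since $I$ is compact, $m(u)=\min_{\lambda\in I}\norm{J_\lambda'(u)}_{E^*}$ is therefore continuous on $D$. By hypothesis it is positive there.

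The second step is the local choice of directions. Fix $u_0\in D$. The two endpoint derivatives $J_{\lambda_\pm}'(u_0)$ determine all the others by convex combination. I look for a single unit vector $w_0$ with
\[
J_{\lambda_-}'(u_0)w_0<-\tfrac12m(u_0)\quad\text{and}\quad J_{\lambda_+}'(u_0)w_0<-\tfrac12m(u_0).
\]
Linearity in $\lambda$ then gives the same strict inequality for every $\lambda\in I$. To produce $w_0$, use that the segment $\{J_\lambda'(u_0):\lambda\in I\}$ is a compact convex subset of $E^*$ at distance $m(u_0)$ from $0$. Let $\xi_0$ be its point of minimal norm. For every $\eta$ in the segment, convexity gives
\[
\norm{\xi_0+t(\eta-\xi_0)}\geq\norm{\xi_0}\quad\text{for }t\in[0,1].
\]
A Hahn--Banach functional $w\in E^{**}=E$ ($E$ is reflexive) that norms $\xi_0$ then satisfies $\eta(w)\geq\norm{\xi_0}$; this is the usual supporting-hyperplane argument for a convex set missing a ball. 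Taking $w_0=-w$, possibly scaled slightly below $1$ in norm, gives
\[
J_\lambda'(u_0)w_0\leq-\tfrac12 m(u_0)
\]
uniformly in $\lambda$, with some room to spare. Joint continuity then gives a neighborhood $N_{u_0}\subset D$ on which $J_\lambda'(u)w_0<-\tfrac14m(u)$ for all $\lambda\in I$ and $u\in N_{u_0}$. Uniformity in $\lambda$ here again follows from compactness of $I$ together with continuity of $m$.

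The third step is gluing. $D$ is metric, hence paracompact, so there is a locally finite locally Lipschitz partition of unity $\{\psi_k\}$ subordinate to the cover $\{N_{u_0}\}$. Set $W=\sum_k\psi_kw_k$. This field is locally Lipschitz, and $\norm W\leq1$ by convexity of the ball. The inequality in \eqref{eq:commonfield} is preserved because it is linear in $w$ and its right-hand side does not depend on $k$.

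The last step is the lower bound on $C\cap\overline B_R^E$. Suppose $u_j$ is a sequence in this set with $m(u_j)\to0$, so that $\norm{J_{\lambda_j}'(u_j)}\to0$ for some $\lambda_j\in I$. Lemma~\ref{lem:Splus} gives a subsequence $u_j\to u$ strongly, and $u$ is critical for the limit parameter $\lambda\in I$. Because $C$ is closed in $E$, $u\in C\subset D$, which contradicts the hypothesis on $D$. I expect the main obstacle to be the second step, obtaining one common descent direction for the entire segment of functionals. The convex-separation argument is what makes it work, so that is where I would be careful, in particular with the nonsmooth norm of $E^*$.
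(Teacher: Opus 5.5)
Your proposal is correct and follows the paper's proof step for step. The steps are the same: continuity of $m$; Hahn--Banach separation, via reflexivity, of the segment $[J_{\lambda_-}'(u),J_{\lambda_+}'(u)]$ from the ball of radius $m(u)$; persistence of the descent inequality on a neighborhood by continuity; a locally Lipschitz partition of unity; and Lemma~\ref{lem:Splus} for the lower bound on $C\cap\overline B_R^E$.

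One small precision concerns the norming functional. In a general Banach space, an arbitrary functional norming the minimal point $\xi_0$ need not separate the segment, so you should take the separating functional itself, as the paper does. Here the issue is moot. Since $E$ is uniformly convex, the element of $E$ norming $\xi_0$ is unique and therefore coincides with the separating one. So the norm of $E^*$ is in fact smooth, contrary to your remark about a nonsmooth norm.
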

\begin{proof}
The function $m$ is continuous. At fixed $u$, the derivatives form the compact convex segment $[J_{\lambda_-}'(u),J_{\lambda_+}'(u)]$, at distance $m(u)$ from zero. Hahn--Banach separation and reflexivity give $v_u\in E$, $\norm{v_u}\leq1$, with $J_\lambda'(u)v_u\leq-m(u)$ for all $\lambda\in I$. Continuity, including in the compact parameter interval, preserves $J_\lambda'(v)v_u\leq-m(v)/4$ on a neighborhood $U_u$. A locally finite locally Lipschitz partition of unity on the metric space $D$, subordinate to these neighborhoods, gives $W$ as a convex combination of these constant vectors. Such partitions are obtained from a locally finite refinement by normalizing distance-to-complement functions, so they require no smoothness of the Banach norm. Convex combination preserves both inequalities. If the last claim failed, Lemma~\ref{lem:Splus} would turn a bounded sequence with $m\to0$ into a critical point in $C\subset D$, a contradiction.
\end{proof}

\begin{proposition}\label{prop:strip}
Fix nonresonant $\rho>0$ and $a<b$. If $J_\rho$ has no critical point in $\{a\leq J_\rho\leq b\}$, then $J_\rho^a$ is a strong deformation retract of $J_\rho^b$. The deformation preserves every sublevel in its domain.
\end{proposition}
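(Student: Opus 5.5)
The plan is to follow the nearby-parameter deformation scheme: construct, with Lemma~\ref{lem:commonfield}, a single field that decreases both $J_\rho$ and $J_{\rho-\delta}$, and use the two monotonicities to bound each orbit in $E$. This bound replaces the global Palais--Smale condition, which is not available here.

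\emph{Step 1: choice of $\delta$ and of an open set free of critical points.} Let $K_\rho=\{u\in E:J_\rho'(u)=0\}$. By \eqref{eq:familycompact} this set is compact. By hypothesis $J_\rho(K_\rho)\cap[a,b]=\varnothing$, and $J_\rho(K_\rho)$ is compact, so there is $\varepsilon>0$ with $J_\rho(K_\rho)\cap[a-2\varepsilon,b+2\varepsilon]=\varnothing$. Fix a compact nonresonant interval $I_0\ni\rho$. By \eqref{eq:familycompact}, the critical points of $J_\lambda$ for $\lambda\in[\rho-\delta,\rho]$ converge, as $\delta\downarrow0$, to $K_\rho$; a contradiction argument with subsequences gives this. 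Since $J_\lambda\to J_\rho$ uniformly on compact sets, we may choose $\delta>0$ so small that $I=[\rho-\delta,\rho]\subset I_0$ and every critical point $u$ of $J_\lambda$, $\lambda\in I$, satisfies $J_\rho(u)\notin[a-\varepsilon,b+\varepsilon]$. Set $D=\{a-\varepsilon<J_\rho<b+\varepsilon\}$. Then $D$ is open, and Lemma~\ref{lem:commonfield} yields a locally Lipschitz field $W$ on $D$ with $\norm W\leq1$ and $J_\lambda'W\leq-m/4$ for all $\lambda\in I$. Multiplying $W$ by a Lipschitz cutoff equal to one on $\{a-\varepsilon/2\leq J_\rho\leq b+\varepsilon/2\}$, we solve $\partial_t\eta=W(\eta)$, $\eta(0,u)=u$. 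Since the field is bounded, the flow exists as long as the orbit stays in $D$, and it does stay in $D$ while $J_\rho(\eta)\geq a$, because $J_\rho$ decreases along the flow.

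\emph{Step 2: orbit bound and finite hitting time.} Fix $u_0$ with $J_\rho(u_0)\in(a,b]$, and let $t$ be any time before the orbit reaches $\{J_\rho=a\}$. Both $J_\rho$ and $J_{\rho-\delta}$ decrease along the flow. Hence
\[
\delta B(\eta)=J_{\rho-\delta}(\eta)-J_\rho(\eta)\leq J_{\rho-\delta}(u_0)-a,
\]
and therefore
\[
Q(\eta)=J_\rho(\eta)+\rho B(\eta)\leq b+\rho\delta^{-1}\bigl(J_{\rho-\delta}(u_0)-a\bigr)=:R(u_0).
\]
The bound $R(u_0)$ is locally bounded in $u_0$. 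We now apply the last claim of Lemma~\ref{lem:commonfield} with $C=\{a\leq J_\rho\leq b\}$, which is closed in $E$ and contained in $D$, and with the ball of radius $(NR)^{1/N}$. It gives $m\geq c(R)>0$ on the orbit. Consequently $\frac{d}{dt}J_\rho(\eta)\leq-c/4$, and the hitting time $\tau(u_0)=\inf\{t:J_\rho(\eta(t,u_0))=a\}$ satisfies $\tau(u_0)\leq4(b-a)/c$. We put $\tau=0$ on $J_\rho^a$.

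\emph{Step 3: continuity and the retraction.} At the level $a$ the derivative of $J_\rho$ along the flow is at most $-c/4<0$, so the level is crossed transversally. The implicit function theorem in $t$, together with continuous dependence of the flow on initial data, then gives continuity of $\tau$ on $\{a<J_\rho\leq b\}$. Near points with $J_\rho(u_0)=a$, the linear rate gives $\tau(u)\leq4(J_\rho(u)-a)/c\to0$, using the locally uniform $c$ from Step 2; this gives continuity of $\tau$ across the level $a$. Define
\[
\mathcal H(s,u)=\eta\bigl(s\tau(u),u\bigr),\qquad s\in[0,1],\ u\in J_\rho^b.
\]
Then $\mathcal H(0,\cdot)=\Id$, $\mathcal H(1,J_\rho^b)\subset J_\rho^a$, and $\mathcal H(s,\cdot)$ is the identity on $J_\rho^a$. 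Since $J_\rho$ is nonincreasing along orbits, $\mathcal H$ maps every sublevel $J_\rho^c$ with $c\leq b$ into itself. This is a strong deformation retraction.

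The main obstacle is Step 2, namely the uniform positive lower bound for $m$ along orbits in the absence of the Palais--Smale condition. This is exactly where the second energy $J_{\rho-\delta}$ enters: through the identity above it converts into a norm bound. The delicate point is that the orbit bound is only locally uniform in $u_0$, so in Step 3 the continuity of $\tau$ has to be argued with constants chosen locally, on small neighbourhoods of each initial point.
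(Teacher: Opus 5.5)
Your proposal is correct and follows essentially the same route as the paper. The shared steps are the nearby-parameter choice of $\varepsilon$ and $\delta$ via \eqref{eq:familycompact}, the common descent field of Lemma~\ref{lem:commonfield} on $D=\{a-\varepsilon<J_\rho<b+\varepsilon\}$, the norm bound from the simultaneous decrease of $J_\rho$ and $J_{\rho-\delta}$, and the stopped flow with a continuous hitting time. The differences are cosmetic: you extend the field globally by a cutoff in $J_\rho$ instead of using the paper's Cauchy-endpoint continuation argument, and your orbit bound carries an extra harmless term $b-a$.
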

\begin{proof}
Choose a compact nonresonant parameter interval about $\rho$. Its exact critical sets have compact union. Thus some $\varepsilon>0$ excludes critical points of $J_\rho$ from $[a-2\varepsilon,b+2\varepsilon]$. For sufficiently small $\delta\in(0,\rho)$,
\[
J_\lambda'(u)\ne0\quad\text{if }\lambda\in[\rho-\delta,\rho],\quad a-\varepsilon\leq J_\rho(u)\leq b+\varepsilon.
\]
Otherwise nearby exact critical points would converge by \eqref{eq:familycompact} to an excluded one. Set $J=J_\rho$, $F=J_{\rho-\delta}$ and apply Lemma~\ref{lem:commonfield} on $D=\{a-\varepsilon<J<b+\varepsilon\}$. Starting at $a<J(u_0)\leq b$, solve $\dot u=W(u)$ until $J=a$. Both $J,F$ decrease, and
\begin{equation}\label{eq:normidentity}
Q=\frac\rho\delta F-\Big(\frac\rho\delta-1\Big)J,
\quad  Q(u(t))\leq\frac\rho\delta F(u_0)-\Big(\frac\rho\delta-1\Big)a.
\end{equation}
Each trajectory is bounded, locally uniformly in $u_0$. Bounded speed rules out finite-time failure before contact: a finite-time endpoint is Cauchy in $E$, has $J\in[a,b]$, and belongs to $D$, where the local solution continues. On the bounded part of the closed strip visited by the orbit, Lemma~\ref{lem:commonfield} gives $m\geq\eta>0$. Equation \eqref{eq:commonfield} therefore gives $\dd J/\dd t\leq-\eta/4$, so the first hitting time $T(u_0)$ is finite.

Continuous dependence and strict decrease at contact give continuity of $T$ away from $J=a$: extend the local flow slightly beyond contact and use two nearby times on opposite sides of the level. Near $u_*\in\{J=a\}$, the norm bound in \eqref{eq:normidentity} and the corresponding $\eta$ are uniform, so
\[
0\leq T(u_0)\leq4(J(u_0)-a)/\eta\to 0.
\]
Set $T=0$ on $J^a$. If $\phi$ is the flow, $H(s,u)=\phi(sT(u),u)$ on the moving part and $H(s,u)=u$ on $J^a$ is the required strong deformation retraction. Decrease of $J$ preserves all its sublevels.
\end{proof}

The whole closed strip must be free of critical points; regularity of its boundary levels alone is insufficient. In particular, Proposition~\ref{prop:strip} applies between any two sufficiently negative levels.

\begin{proposition}\label{prop:high}
For every nonresonant $\rho>0$, there exists $b_*$ such that $J_\rho^b$ is a strong deformation retract of $E$ for every $b\geq b_*$. It is contractible.
\end{proposition}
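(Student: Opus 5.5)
Since the critical set of $J_\rho$ is bounded in $E$ by Theorem~\ref{thm:compact}, and $J_\rho$ is bounded on bounded sets ($B$ is bounded on bounded sets by \eqref{eq:expbounded} and the Jensen lower bound for $M$), we choose $b_*$ strictly larger than $\sup\{J_\rho(u):J_\rho'(u)=0\}$. The plan is then to build a flow that pushes all of $E$ into $J_\rho^b$, repeating the argument of Proposition~\ref{prop:strip} with an unbounded upper level.

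\textbf{Nearby parameter.} As in Proposition~\ref{prop:strip}, take a compact nonresonant interval $[\rho-\delta,\rho]$. By \eqref{eq:familycompact}, after shrinking $\delta$ and enlarging $b_*$, no $J_\lambda$ with $\lambda$ in this interval has a critical point in $D=\{J_\rho>b_*-1\}$. Apply Lemma~\ref{lem:commonfield} on $D$ with $F=J_{\rho-\delta}$ and $J=J_\rho$. Starting from $u_0$ with $J(u_0)>b$, flow along $\dot u=W(u)$ until $J=b$. Along the flow both $J$ and $F$ decrease, so \eqref{eq:normidentity} gives
\[
Q(u(t))\leq\frac\rho\delta F(u_0)-\Big(\frac\rho\delta-1\Big)b .
\]
Hence each orbit stays bounded, locally uniformly in $u_0$, because $F$ is continuous.

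\textbf{Finite hitting time.} On the bounded closed set visited by the orbit, which lies inside $\{J\geq b\}\subset D$, Lemma~\ref{lem:commonfield} gives $m\geq\eta>0$, and therefore $\dd J/\dd t\leq-\eta/4$. The orbit cannot stop in finite time: the speed is at most $1$, so an endpoint would be a Cauchy limit inside $D$ where the flow continues. Thus the first hitting time $T(u_0)\leq 4(J(u_0)-b)/\eta$ is finite. Continuity of $T$, including near $\{J=b\}$, follows exactly as in the proof of Proposition~\ref{prop:strip}. Setting $H(s,u)=\phi(sT(u),u)$, and $H(s,u)=u$ on $J^b$, gives the strong deformation retraction of $E$ onto $J_\rho^b$.

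\textbf{Contractibility.} $E$ is contractible, and a retract of a contractible space by a deformation is homotopy equivalent to it, so $J_\rho^b$ is contractible.

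\textbf{Main obstacle.} The delicate point is that the strip $\{J>b\}$ is unbounded, so uniformity of $\eta$ must be local in $u_0$ rather than global. The bound \eqref{eq:normidentity} handles this, because it depends only on $F(u_0)$, which is locally bounded. One must also check that the choice of $\delta$ is uniform over the unbounded region. This comes from compactness of the union of critical sets over the parameter interval in \eqref{eq:familycompact}: a sequence of critical points of nearby $J_{\lambda_j}$ with $J_\rho>b_*-1$ would converge to a critical point of $J_\rho$ above $b_*-1$, which is excluded.
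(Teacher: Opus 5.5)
Your proposal is correct and follows the paper's proof. You use a common descent field for $J_\rho$ and $J_{\rho-\delta}$ on a superlevel set free of family critical points, obtain orbit bounds from \eqref{eq:normidentity} with lower level $b$, and get finite, continuous hitting times from the positive lower bound for $m$ on bounded parts of $\{J_\rho\geq b\}$. Contractibility then follows because a retract of the contractible space $E$ is contractible, which the paper makes explicit via $(s,u)\mapsto r_b((1-s)u)$.
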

\begin{proof}
Choose $\delta>0$ with $[\rho-\delta,\rho]$ nonresonant and $b_*>J_\rho(0)+1$ so that every family critical point has $J_\rho<b_*-2$. For $b\geq b_*$ use the common unit descent field on $D=\{J_\rho>b-1\}$ until $J_\rho=b$. Formula \eqref{eq:normidentity}, with lower bound $b$, bounds each trajectory. Lemma~\ref{lem:commonfield}, applied to the closed set $\{J_\rho\geq b\}\subset D$, gives a positive lower bound for $m$ on the bounded part visited by that trajectory, locally uniformly in its initial point. The stopping-time argument of Proposition~\ref{prop:strip} therefore gives a strong deformation retraction onto $J_\rho^b$. If $r_b:E\to J_\rho^b$ is the resulting retraction, $(s,u)\mapsto r_b((1-s)u)$ contracts $J_\rho^b$ to zero.
\end{proof}

\begin{proposition}\label{prop:margin}
Let $\rho>0$ be nonresonant, and choose $\vartheta>1$ such that $\rho/\vartheta$ and $\rho$ belong to the same nonresonant component. For every sufficiently negative $a$,
\[
U_1=\{J_\rho+(\vartheta-1)Q\leq a\}=J_{\rho/\vartheta}^{a/\vartheta}
\]
is a strong deformation retract of $U_0=J_\rho^a$. The deformation preserves every sublevel of $J_\rho$.
\end{proposition}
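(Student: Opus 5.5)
The plan is to interpolate between the two functionals and flow along a common descent field, exactly as in Proposition~\ref{prop:strip}. First record the algebraic identity
\[
J_\rho+(\vartheta-1)Q=\vartheta Q-\rho B=\vartheta J_{\rho/\vartheta},
\]
which gives $U_1=J_{\rho/\vartheta}^{a/\vartheta}$. Since $Q\geq0$, we also have $U_1\subset U_0$.

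Since nonresonant components are open, I choose $\delta'>0$ with $I=[\rho/\vartheta-\delta',\rho]$ contained in the same nonresonant component. By \eqref{eq:familycompact}, the critical pairs $(\lambda,u)\in I\times E$ form a compact set. Hence $J_\lambda(u)\geq -C_*$ for all of them, and then $J_\rho(u)$ and $J_{\rho/\vartheta}(u)$ are also bounded below at these points, since $B$ is bounded on this compact set. Fix $a$ so negative that no critical point of any $J_\lambda$, $\lambda\in I$, lies in the open set
\[
D=\{J_\rho<a+1\}\cap\{J_{\rho/\vartheta}>a/\vartheta-1\}.
\]
Lemma~\ref{lem:commonfield} then provides a locally Lipschitz field $W$ on $D$ with $\norm{W}_E\leq1$ and $J_\lambda'(u)W(u)\leq-\tfrac14 m(u)$ for every $\lambda\in I$.

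For $u_0\in U_0\setminus U_1$ I solve $\dot u=W(u)$ until $J_{\rho/\vartheta}=a/\vartheta$, that is, until the orbit enters $U_1$. Along the orbit every $J_\lambda$ with $\lambda\in I$ decreases. Thus $J_\rho\leq a$ is preserved, every sublevel of $J_\rho$ is preserved, and before the stopping time the orbit stays in $D$.

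The main obstacle is the orbit bound. Writing $Q$ through $J_\rho$ and $J_{\rho/\vartheta}$ alone, as in \eqref{eq:normidentity}, would require a lower bound for $J_\rho$, which is not available. This is why the interval was enlarged below $\rho/\vartheta$. With $\lambda_1=\rho/\vartheta-\delta'$ and $\lambda_2=\rho/\vartheta$, eliminating $B$ gives
\[
Q=\frac{\lambda_2J_{\lambda_1}-\lambda_1J_{\lambda_2}}{\lambda_2-\lambda_1}
\leq\frac{\lambda_2J_{\lambda_1}(u_0)-\lambda_1 a/\vartheta}{\lambda_2-\lambda_1}.
\]
Here I used that $J_{\lambda_1}$ decreases along the flow, while $J_{\lambda_2}>a/\vartheta$ before the hitting time. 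This bounds each trajectory locally uniformly in $u_0$.

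With the bound in hand, the remaining steps follow Proposition~\ref{prop:strip}:
\begin{enumerate}
\item The trajectory stays in a set that is closed in $E$, namely $\{J_\rho\leq a\}\cap\{J_{\rho/\vartheta}\geq a/\vartheta\}\subset D$, and is bounded. Lemma~\ref{lem:commonfield} then gives $m\geq\eta>0$ along it.
\item Consequently $\frac{\dd}{\dd t}J_{\rho/\vartheta}\leq-\eta/4$, so the first hitting time $T(u_0)$ is finite.
\item The hitting time $T$ is continuous, with $T\to0$ near $\{J_{\rho/\vartheta}=a/\vartheta\}$. This uses the transversal strict decrease at contact and the uniform $\eta$ nearby.
\item Set $T=0$ on $U_1$. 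The map $(s,u)\mapsto\phi(sT(u),u)$ is then the required strong deformation retraction of $U_0$ onto $U_1$, and it preserves all sublevels of $J_\rho$.
\end{enumerate}
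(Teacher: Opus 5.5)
Your proposal is correct and follows essentially the paper's proof. The paper also adds a fixed lower parameter $\lambda_0<\rho/\vartheta$ and uses the common descent field for $[\lambda_0,\rho]$ on $\{J_\rho<a+1\}$. It bounds the orbit by the same identity, written as $Q=\bigl(F-(\lambda_0/\rho)G\bigr)/(1-\vartheta\lambda_0/\rho)$ with $F=J_{\lambda_0}$ and $G=\vartheta J_{\rho/\vartheta}$; this is yours after rescaling. It then stops the flow when $G=a$. The only other differences are cosmetic: your extra lower constraint $\{J_{\rho/\vartheta}>a/\vartheta-1\}$ in $D$, and the wording of step 1, where it is the trajectory rather than the closed set that is bounded.
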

\begin{proof}
Choose $0<\lambda_0<\lambda_1=\rho/\vartheta$ in the same component, and set $I=[\lambda_0,\rho]$, $F=J_{\lambda_0}$, $G=\vartheta J_{\lambda_1}$. By \eqref{eq:familycompact}, choose $a_*<0$ such that all family critical points have $J_\rho>a_*+2$. For $a\leq a_*$ use the common field on $D=\{J_\rho<a+1\}$. On each bounded part of the closed set $\{J_\rho\leq a\}$, write $m\geq\eta>0$ for a positive lower bound. Both $J_\rho,F$ decrease, and $J_{\lambda_1}=\zeta J_\rho+(1-\zeta)F$ for $\zeta=(\lambda_1-\lambda_0)/(\rho-\lambda_0)\in(0,1)$, so $G$ decreases with $\dd G/\dd t\leq-\vartheta m/4$. Before its first contact with $a$,
\begin{equation}\label{eq:marginidentity}
Q(u)=\frac{F(u)-(\lambda_0/\rho)G(u)}{1-\vartheta\lambda_0/\rho}
\leq\frac{F(u_0)-(\lambda_0/\rho)a}{1-\vartheta\lambda_0/\rho}.
\end{equation}
Since $\lambda_0<\rho/\vartheta$, the factor $1-\vartheta\lambda_0/\rho$ in \eqref{eq:marginidentity} is positive. Thus the orbit remains bounded and inside $U_0$. Bounded speed gives continuation and the positive lower bound for $m$ gives finite contact. The bound is locally uniform in $u_0$; continuous dependence and strict crossing make the hitting time continuous away from $U_1$, and $T(u)\leq4(G(u)-a)/(\vartheta\eta)$ gives continuity at $U_1$ when $T$ is extended by zero. The stopped flow supplies the stated deformation.
\end{proof}

The additional term $(\vartheta-1)Q$ bounds the error $\varepsilon Q$ in estimates $J_\rho(H_tu)\leq J_\rho(u)+\varepsilon Q(u)+C$ with $\varepsilon<\vartheta-1$; the constant $C$ is accounted for by decreasing the sufficiently negative level, using Proposition~\ref{prop:strip}. Equivalently, the common field is transverse to $\{J_\rho+s(\vartheta-1)Q=a\}$ for $0\leq s\leq1$. The fixed lower parameter $\lambda_0$ supplies \eqref{eq:marginidentity}; neither differentiability of $A^{-1}$ nor a compactness assumption on unbounded approximate critical sequences has entered.

\subsection{Finite-dimensional reduction}
In the degree comparison below, set $\Gcal=\rho B'$ and $K_\rho=A^{-1}\Gcal$; the calligraphic symbol denotes an $E^*$-valued map, unlike the scalar $B=\ln  M$.
\begin{lemma}\label{lem:Galerkin}
Let $U\subset E$ be bounded and open with $u\ne K_\rho(u)$ on $\partial U$. Choose increasing finite-dimensional spaces $E_n\subset E$ with dense union and any inner product $(\cdot,\cdot)_n$ on each, with norm $\norm{v}_n=(v,v)_n^{1/2}$. Define $A_n:E_n\to E_n$ and $\Gcal_n:E\to E_n$ by
\[
(A_n(v),h)_n=\pair{A(v)}h,\quad (\Gcal_n(u),h)_n=\pair{\Gcal(u)}h\quad(h\in E_n).
\]
For all sufficiently large $n$,
\[
\degLS(\Id-K_\rho,U,0)=\degB(A_n-\Gcal_n,U\cap E_n,0).
\]
\end{lemma}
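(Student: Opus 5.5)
The plan is to pass from $\Id-K_\rho$ to a finite-rank perturbation of the identity whose range lies in $E_n$. The finite-rank map is the Galerkin solution operator
\[
K^n(u)=A_n^{-1}\bigl(\Gcal_n(u)\bigr)\in E_n,\qquad u\in\overline U .
\]
I will then relate $\Id-K^n$ on $U\cap E_n$ to $A_n-\Gcal_n$ by composing with the homeomorphism $A_n$.

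First, $A_n:E_n\to E_n$ is well defined and a homeomorphism. By \eqref{eq:monotone} it is continuous, strictly monotone with respect to $(\cdot,\cdot)_n$, and coercive. Hence by the finite-dimensional Minty--Browder theorem, or directly by minimizing $Q(v)-(g,v)_n$ on $E_n$, it is bijective with continuous inverse. Thus $K^n$ is continuous, and it is compact because its values lie in the finite-dimensional space $E_n$ and are bounded on $\overline U$.

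The key analytic step, which I expect to be the main obstacle, is uniform convergence $K^n\to K_\rho$ on $\overline U$. Fix $u$ and set $w=K_\rho(u)$ and $w_n=K^n(u)$. The Galerkin orthogonality reads $\pair{A(w_n)-A(w)}h=0$ for $h\in E_n$. Combining it with \eqref{eq:monotone}, for every $h\in E_n$,
\[
d_N\norm{w_n-w}_E^N\leq\pair{A(w_n)-A(w)}{w_n-w}=\pair{A(w_n)-A(w)}{h-w}.
\]
Testing the Galerkin equation with $w_n$ gives $\norm{w_n}_E^{N-1}\leq\norm{\Gcal(u)}_{E^*}$, so the $E^*$-norms of $A(w_n)$ and $A(w)$ are bounded uniformly for $u\in\overline U$. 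Hence
\[
\norm{w_n-w}_E^N\leq C\dist\bigl(K_\rho(u),E_n\bigr).
\]
The set $K_\rho(\overline U)$ is relatively compact by Lemma~\ref{lem:Splus}, and the $E_n$ increase with dense union. Therefore $\sup_{u\in\overline U}\dist(K_\rho(u),E_n)\to0$, which gives uniform convergence.

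Since $\Id-K_\rho$ is proper on $\overline U$ (a standard property of compact perturbations) and does not vanish on $\partial U$, we have $\inf_{\partial U}\norm{u-K_\rho(u)}_E>0$. Uniform convergence then gives $u\neq K^n(u)$ on $\partial U$ for large $n$. The linear homotopy $\Id-(tK^n+(1-t)K_\rho)$ therefore has no boundary zeros, so
\[
\degLS(\Id-K_\rho,U,0)=\degLS(\Id-K^n,U,0).
\]
Because $K^n$ takes values in $E_n$, the reduction property of the Leray--Schauder degree gives
\[
\degLS(\Id-K^n,U,0)=\degB\bigl(\Id-K^n,U\cap E_n,0\bigr).
\]

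Finally, $A_n-\Gcal_n=A_n\circ A_n^{-1}\circ(A_n-\Gcal_n)$ on $U\cap E_n$, and the zeros of $A_n-\Gcal_n$ there coincide with those of $\Id-K^n$. I will connect the two maps by the homotopy
\[
\Phi_t(v)=A_n^{-1}\bigl(tA_n(v)+(1-t)\Gcal_n(v)\bigr)\cdot\text{(rearranged)}
\]
or, more simply, use the multiplication/composition property of Brouwer degree. Since $A_n$ is a homeomorphism, $A_n-\Gcal_n$ and $\Id-A_n^{-1}\Gcal_n$ differ, up to homotopy, by composition with $A_n$, and this changes the degree by the factor $\deg A_n$. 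To see that $\deg A_n=1$, take the homotopy $tA_n+(1-t)\Id_{E_n}$. Each map in it is strictly monotone for $(\cdot,\cdot)_n$, so its only zero is $0$, and its degree at $0$ is constant, equal to $\deg\Id=1$.

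An alternative that avoids composition formulas is the direct homotopy $H_t(v)=A_n(v)-A_n\bigl(tv+(1-t)K^n(v)\bigr)$... which does not cleanly work. Instead I would argue through the local structure: $v\mapsto A_n(v)-\Gcal_n(v)$ equals $A_n(v)-A_n(K^n(v))$, and $(s,v)\mapsto A_n(v)-A_n(sK^n(v)+(1-s)\cdot)$... and rely on the composition route above.

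Putting the steps together gives $\degLS(\Id-K_\rho,U,0)=\degB(A_n-\Gcal_n,U\cap E_n,0)$ for all large $n$, independently of the chosen inner products.
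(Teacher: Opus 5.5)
Your first half is exactly the paper's argument: the Galerkin operator $K^n=A_n^{-1}\Gcal_n$, the bound $d_N\norm{w_n-w}_E^N\leq\pair{A(w_n)-A(w)}{h-w}$ for $h\in E_n$, uniform convergence from relative compactness of $K_\rho(\overline U)$, the admissible straight homotopy, and the Leray--Schauder reduction to $\degB(\Id-K^n,U\cap E_n,0)$. Your computation $\deg A_n=1$ through the strictly monotone family $tA_n+(1-t)\Id$ is also correct.

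\textbf{The gap is the final step, which is the actual content of the sign claim.} For $N>2$ the map $A_n$ is nonlinear. Hence $A_n-\Gcal_n=A_n-A_n\circ K^n$ is not $A_n\circ(\Id-K^n)$. The factorization $A_n-\Gcal_n=A_n\circ A_n^{-1}\circ(A_n-\Gcal_n)$ does not help either: it passes through $A_n^{-1}(A_n-A_n\circ K^n)$, which is not $\Id-K^n$. Your assertion that the two maps ``differ, up to homotopy, by composition with $A_n$'' is therefore precisely what has to be proved, and you never prove it. The displayed $\Phi_t$ interpolates between $K^n$ and $\Id$, not between the two maps whose degrees you need to compare. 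You abandon the other candidate $H_t$ yourself. What is missing is an explicit homotopy whose zero set stays equal to the fixed-point set of $K^n$. This is exactly where monotonicity or injectivity of $A_n$ must enter.

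The paper uses
\[
H_s(v)=(1-s)\bigl[A_n(v)-A_n(K^n(v))\bigr]+s\bigl[v-K^n(v)\bigr].
\]
For $d=v-K^n(v)\neq0$ in $E_n$, inequality \eqref{eq:monotone} gives $(H_s(v),d)_n=(1-s)\pair{A(v)-A(K^n(v))}{d}+s\norm d_n^2>0$. So no zeros appear on $\partial U\cap E_n$, and the endpoints are directly $A_n-\Gcal_n$ and $\Id-K^n$; no product formula is needed.

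If you prefer your composition route, use
\[
G_t(v)=A_n\bigl(v-tK^n(v)\bigr)-A_n\bigl((1-t)K^n(v)\bigr).
\]
Injectivity of $A_n$ gives $G_t(v)=0$ if and only if $v=K^n(v)$. At the endpoints, $G_0=A_n-\Gcal_n$, and $G_1=A_n\circ(\Id-K^n)$ because $A_n(0)=0$. The product formula, together with $A_n^{-1}(0)=\{0\}$ and your local index $\deg(A_n,B,0)=1$, then completes the proof.
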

\begin{proof}
  Its proof, including the sign, is in Appendix~\ref{app:technical}.  
\end{proof}

Fix $0<\delta<\rho$ with $[\rho-\delta,\rho]$ nonresonant, set $J=J_\rho$, $F=J_{\rho-\delta}$, and let $\mathcal C$ be the compact union of their family critical sets. Choose
\[
C>2+\sup_{u\in\mathcal C}F(u),\quad  \Mcap=\{F\leq C\},
\]
with any such $C$ if $\mathcal C$ is empty. The notation $\Mcap$ always denotes this constraint set, while $M(u)$ remains the mass integral.

\begin{lemma}\label{lem:localize}
For $a_0<a_1<b$, let $Z=J^{a_0}\cup(J^b\cap\Mcap)$ and $V=\{J<a_1\}$. There exists a strong deformation retraction of $J^b$ onto $Z$, preserving all sublevels of $J$. Consequently, over any fixed coefficient field,
\[
H_j(J^b,V)\cong H_j(J^b\cap\Mcap,V\cap\Mcap)\quad(j\geq0).
\]
For finite $a<b$, $\Mcap\cap\{a\leq J\leq b\}$ is bounded in $E$.
\end{lemma}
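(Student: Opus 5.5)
The last assertion is a direct consequence of \eqref{eq:normidentity}, so I would prove it first. Since $J=Q-\rho B$ and $F=Q-(\rho-\delta)B$, we have
\[
Q=\frac\rho\delta F-\Big(\frac\rho\delta-1\Big)J .
\]
On $\Mcap\cap\{a\leq J\leq b\}$ this gives $Q\leq(\rho/\delta)C-(\rho/\delta-1)a$. Hence the set is bounded in $E$.

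For the deformation I would run the common descent field of Lemma~\ref{lem:commonfield} with $I=[\rho-\delta,\rho]$ on the open set
\[
D=\{F>C-1\}\cap\{J>a_0-1\}.
\]
This set contains no family critical point, because $C-1>1+\sup_{\mathcal C}F$. Along the flow both $J$ and $F$ decrease. Take $u_0\in J^b\setminus Z$, so that $J(u_0)>a_0$ and $F(u_0)>C$. I would let $T(u_0)$ be the first time at which either $F=C$ or $J=a_0$, and set $T=0$ on $Z$. Before that time \eqref{eq:normidentity} holds with lower level $a_0$:
\[
Q(u(t))\leq\frac\rho\delta F(u_0)-\Big(\frac\rho\delta-1\Big)a_0 .
\]
So the orbit stays in a bounded part of the closed set $\{F\geq C,\ J\geq a_0\}\subset D$, and this bound is locally uniform in $u_0$. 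On that bounded part Lemma~\ref{lem:commonfield} gives $m\geq\eta>0$. Then $\dd F/\dd t\leq-\eta/4$ forces $T(u_0)\leq 4(F(u_0)-C)/\eta<\infty$. Bounded speed excludes finite-time breakdown, exactly as in Proposition~\ref{prop:strip}. The map $H(s,u)=\phi(sT(u),u)$ is then the candidate strong deformation retraction of $J^b$ onto $Z$. It preserves every sublevel of $J$, both closed and open, since $J$ is nonincreasing along orbits.

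I expect continuity of $T$ to be the main obstacle, because there are two stopping conditions. Write $T_F$ and $T_J$ for the hitting times of $\{F=C\}$ and $\{J=a_0\}$, each extended slightly past contact by continuing the local flow in $D$. Each level is crossed strictly transversally, since $\dd F/\dd t$ and $\dd J/\dd t$ are at most $-\eta/4$. The two-sided argument of Proposition~\ref{prop:strip} then makes each hitting time continuous where it is finite and the orbit is still in $D$. Thus $T=\min\{T_F,T_J\}$ is continuous on $J^b\setminus Z$, and this covers points where both levels are reached at once. At a point $u_*\in Z$ reached as a limit from $J^b\setminus Z$, one has $u_*\in\partial Z$, so either $F(u_*)=C$ or $J(u_*)=a_0$. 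In the first case $T\leq 4(F(u_0)-C)/\eta\to0$, and in the second $T\leq 4(J(u_0)-a_0)/\eta\to0$. Both use a uniform $\eta$ near $u_*$, coming from the locally uniform norm bound. Hence $T$ is continuous on $J^b$.

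For the homology statement I would argue in two steps.
\begin{enumerate}[label=(\roman*)]
\item Since $a_0<a_1$, we have $V\subset J^{a_0}$? No: rather $J^{a_0}\subset V$, and the deformation preserves $V$, so $(J^b,V)\to(Z,V)$ is a homotopy equivalence of pairs. This gives $H_j(J^b,V)\cong H_j(Z,V)$.
\item Excise $X=\{J\leq a_0,\ F>C\}$ from $(Z,V)$. Its closure lies in $\{J\leq a_0\}$, which is contained in the interior of $V=\{J<a_1\}$ relative to $Z$. Moreover $Z\setminus X=J^b\cap\Mcap$ and $V\setminus X=V\cap\Mcap$. Excision then yields the stated isomorphism over any coefficient field.
\end{enumerate}
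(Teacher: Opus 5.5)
Your proposal is correct and follows essentially the same route as the paper. Both arguments stop the common descent field of Lemma~\ref{lem:commonfield} at the first contact with $\{J=a_0\}$ or $\{F=C\}$, bound orbits and contact times via \eqref{eq:normidentity} with lower level $a_0$, and excise $\{J\leq a_0,\ F>C\}$. One small fix: $V\not\subset Z$, since $V$ contains points with $a_0<J<a_1$ and $F>C$, so the intermediate pair must be $(Z,Z\cap V)$. That pair is what the sublevel-preserving retraction actually gives, and with it your identity $(Z\cap V)\setminus X=V\cap\Mcap$ becomes correct; also delete the stray ``$V\subset J^{a_0}$? No'' aside.
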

\begin{proof}
On $D=\{F>C-1\}$ the family has no critical points. Use Lemma~\ref{lem:commonfield} to decrease both $J,F$. Starting in $J^b\setminus Z$, stop when either $J=a_0$ or $F=C$. Before contact,
\[
Q(u(t))\leq(\rho/\delta)F(u_0)-(\rho/\delta-1)a_0.
\]
The orbit is bounded. Bounded speed prevents finite-time failure before contact, and the positive lower bound for $m$ on the bounded part of $\{F\geq C\}$ makes contact finite. At either stopping level the corresponding function has strictly negative derivative; at simultaneous contact the stopping time is the minimum of the two local crossing times. These times, and hence their minimum, vary continuously. Locally uniform decrease also makes the stopped time tend to zero at $Z$. Thus the stopped flow is the claimed retraction and an equivalence of pairs $(J^b,V)\simeq(Z,Z\cap V)$.

Set $D_0=J^b\cap\Mcap$. The closure in $Z$ of $Z\setminus D_0$ is contained in $J^{a_0}$, in the relative interior of $Z\cap V$ because $a_0<a_1$. Excision gives $H_j(Z,Z\cap V)\cong H_j(D_0,D_0\cap V)$. Finally, \eqref{eq:normidentity} bounds $Q$ whenever $F\leq C$ and $J\geq a$.
\end{proof}

\begin{lemma}\label{lem:capstrip}
If $a<b$ and $J$ has no critical point in $\Mcap\cap\{a\leq J\leq b\}$, then $\Mcap\cap J^a$ is a strong deformation retract of $\Mcap\cap J^b$, by a deformation preserving every sublevel of $J$ in $\Mcap$.
\end{lemma}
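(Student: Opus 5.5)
We follow the stopped-flow argument of Proposition~\ref{prop:strip}, but use the common descent field of Lemma~\ref{lem:commonfield} for the pair $J=J_\rho$, $F=J_{\rho-\delta}$. Since both functions decrease along this field, the set $\Mcap=\{F\leq C\}$ is forward invariant.

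\textbf{Step 1: critical-point-free margin.} The set $K=\Mcap\cap\{a\leq J\leq b\}$ is bounded by Lemma~\ref{lem:localize}. We claim some $\varepsilon>0$ leaves $\Mcap\cap\{a-\varepsilon\leq J\leq b+\varepsilon\}$ free of critical points of every $J_\lambda$ with $\lambda\in[\rho-\delta,\rho]$. Since $\delta$ was fixed earlier, we cannot shrink it as in Proposition~\ref{prop:strip}; the margin must come from a margin in $F$. Recall that $C>2+\sup_{\mathcal C}F$. Hence every family critical point lying in $\Mcap$ satisfies $F\leq C-2$. The set of family critical points is compact by \eqref{eq:familycompact}, and by hypothesis no critical point of $J$ has $J\in[a,b]$ inside $\Mcap$. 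We may choose $\varepsilon$ so that no family critical point of $J_\rho$ lies in $\{F\leq C\}\cap\{a-\varepsilon\leq J\leq b+\varepsilon\}$.

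\textbf{Step 2: the open domain.} Critical points of other $J_\lambda$ must also be excluded, because the field of Lemma~\ref{lem:commonfield} requires $J'_\lambda\neq0$ for every $\lambda\in I=[\rho-\delta,\rho]$. This is the main obstacle. Lemma~\ref{lem:commonfield} only asks that $D$ be open with no family critical points. One option is to work with the single functional $J$ and a pseudo-gradient for $J$ alone. That fails because boundedness of orbits, via \eqref{eq:normidentity}, needs $F$ to decrease as well.

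Instead we choose
\[
D=\{a-\varepsilon<J<b+\varepsilon\}\cap\{F<C+1\}\setminus\mathcal C',
\]
where $\mathcal C'$ is the compact set of family critical points. If $\mathcal C'$ meets the strip inside $\Mcap$ only at parameters $\lambda\neq\rho$, those points must be avoided. Here I would verify that the lower bound \eqref{eq:commonfield} and the closedness requirement still hold on the bounded closed set $\Mcap\cap\{a\leq J\leq b\}$.

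The cleaner route is to note that the hypothesis, interpreted as in Lemma~\ref{lem:localize}, is meant for the whole family. Accordingly, I would first try to show that family critical points in $\Mcap$ with $J\in[a,b]$ are excluded. If that is not available, I would shrink the parameter interval used in the field to $[\rho-\delta',\rho]$. Compactness then gives the claim as in Proposition~\ref{prop:strip}. Boundedness is unaffected, since \eqref{eq:normidentity} with $\delta'$ still bounds $Q$ on $\{F_{\delta'}\leq C'\}$, and $\Mcap$ together with $J\geq a$ also bounds $F_{\delta'}$.

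\textbf{Step 3: the deformation.} Starting at $u_0\in\Mcap$ with $a<J(u_0)\leq b$, solve $\dot u=W(u)$ until $J=a$. Along the orbit $F$ decreases, so the orbit stays in $\Mcap$. By \eqref{eq:normidentity} together with $J\geq a$, $Q$ stays bounded, locally uniformly in $u_0$. Lemma~\ref{lem:commonfield}, applied to the bounded closed set $\Mcap\cap\{a\leq J\leq b\}$, gives $m\geq\eta>0$. Hence $\dd J/\dd t\leq-\eta/4$, and the hitting time satisfies $T(u_0)\leq4(J(u_0)-a)/\eta$.

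The hitting time is continuous away from $\{J=a\}$ by transversal crossing, and continuous at that level by this bound. Setting $T=0$ on $\Mcap\cap J^a$, the map $(s,u)\mapsto\phi(sT(u),u)$ is the required strong deformation retraction. It preserves every sublevel of $J$ in $\Mcap$ because $J$ decreases along the flow and $\Mcap$ is invariant.
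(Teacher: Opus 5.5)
There is a genuine gap: the field your Step~3 relies on is not available under the stated hypothesis. Lemma~\ref{lem:commonfield} produces a field decreasing both $J$ and $F$ only on a set free of critical points of every $J_\lambda$, $\lambda\in[\rho-\delta,\rho]$. The hypothesis of Lemma~\ref{lem:capstrip}, however, excludes only critical points of $J=J_\rho$ in $\Mcap\cap\{a\leq J\leq b\}$, and critical points of $J_\lambda$ with $\lambda<\rho$ may lie there. So the claim at the start of Step~1 is unproved, and none of your Step~2 fallbacks repairs it:
\begin{itemize}
\item Deleting $\mathcal C'$ from $D$ destroys the positive lower bound on $m$, because $\Mcap\cap\{a\leq J\leq b\}\setminus\mathcal C'$ is no longer closed. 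Orbits may approach the deleted points, so hitting times need not be finite.
\item Reading the hypothesis as a statement about the whole family proves a different lemma with a stronger hypothesis.
\item Shrinking to $[\rho-\delta',\rho]$ gives a field decreasing $J_\rho$ and $J_{\rho-\delta'}$, but not $F=J_{\rho-\delta}$. Indeed $J_{\rho-\delta}=J_\rho+\frac{\delta}{\delta'}(J_{\rho-\delta'}-J_\rho)$ is an extrapolation, not a convex combination. You then lose forward invariance of $\Mcap$, which you use in Step~3 and which the statement requires, since the homotopy must stay in $\Mcap\cap J^b$.
\end{itemize}

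The missing idea is that $F$ only needs to decrease near the cap $\{F=C\}$, not everywhere. You dismissed the single-functional route too quickly. Boundedness of orbits does not need monotonicity of $F$; it needs only $F\leq C$ together with $J\geq a-\varepsilon$, which bounds $Q$ by \eqref{eq:normidentity}. The paper's proof builds a hybrid field on the bounded set $D=\{a-\varepsilon<J<b+\varepsilon,\ F<C+\varepsilon\}$ as follows.
\begin{itemize}
\item Near the cap: since $C>2+\sup_{\mathcal C}F$, no family critical point has $F>C-1$. Lemma~\ref{lem:Splus} then gives a uniform positive lower bound for $\min_\lambda\norm{J_\lambda'}_{E^*}$ on $D\cap\{F>C-1\}$, so common descent directions for $J$ and $F$ exist there.
\item Away from the cap: on $D\cap\{F<C-1/2\}$, use descent directions for $J$ alone. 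They exist because $\inf_D\norm{J'}_{E^*}>0$, which follows from the actual hypothesis, compactness, and a small choice of $\varepsilon$.
\item Patching: a locally Lipschitz partition of unity gives a field with $J'V\leq-\eta$ on $D$ and $F'V\leq-\eta$ wherever $F\geq C-1/2$.
\end{itemize}
The flow then stays in $\Mcap$ and remains in $D$ until $J=a$. The rest of your Step~3 (the hitting-time bound, continuity of the hitting time, and the stopped flow) goes through as you wrote it.
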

\begin{proof}
  The proof in Appendix~\ref{app:technical} uses common descent near $F=C$ to keep trajectories inside the constraint.  
\end{proof}

For the next lemma choose $a<b$ and $C$ with the stricter inequalities
\begin{equation}\label{eq:facegap}
a+3<J(u)<b-3,\quad  F(u)<C-3
\quad\text{whenever }J_\lambda'(u)=0,\ \rho-\delta\leq\lambda\leq\rho.
\end{equation}
These choices are possible by \eqref{eq:familycompact}.
\begin{lemma}\label{lem:penalty}
The relative homology $H_*(\Mcap\cap J^b,\Mcap\cap J^a;\Q)$ has finite total rank. For all sufficiently large $n$,
\begin{equation}\label{eq:homologyreduction}
H_*(\Mcap\cap J^b,\Mcap\cap J^a;\Q)
\cong H_*(E_n\cap\Mcap\cap J^b,E_n\cap\Mcap\cap J^a;\Q).
\end{equation}
The open set $U=\{a<J<b,F<C\}$ is bounded and
\begin{equation}\label{eq:EulerBrouwer}
\chi(\Mcap\cap J^b,\Mcap\cap J^a)=\degB(\nabla_n(J|_{E_n}),U\cap E_n,0),
\end{equation}
where $\nabla_n$ is the gradient for $(\cdot,\cdot)_n$.
\end{lemma}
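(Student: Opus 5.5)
The plan has three steps: first bound $U$ and show the critical set in it is compact, then convert the relative homology into a statement about a finite-dimensional Galerkin slice, and finally read off the Euler characteristic there by Morse theory.

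\emph{Boundedness and finite rank.} Boundedness of $U$ is the last assertion of Lemma~\ref{lem:localize}. Equivalently, the identity \eqref{eq:normidentity} bounds $Q$ whenever $F\leq C$ and $J\geq a$. The critical points of $J$ in $\overline U$ then lie in the compact set $\mathcal C$, and by \eqref{eq:facegap} they sit strictly inside $U$, at distance at least $3$ from every face in the energy variables. Lemma~\ref{lem:capstrip} lets us shrink the pair $(\Mcap\cap J^b,\Mcap\cap J^a)$ to thinner levels $a+2$, $b-2$ and a thinner constraint $C-2$ without changing homology. The third shrinking is carried out by the same common-descent flow, now stopped at $F=C-2$.

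\emph{Reduction to $E_n$.} To obtain \eqref{eq:homologyreduction}, I would follow the scheme sketched in the introduction. Add the same distance-type penalty to both $J$ and $F$; this is the identity \eqref{eq:penaltyidentity} of the Appendix, which preserves \eqref{eq:normidentity}. The penalty makes the modified functionals coercive transversally to $E_n$. The relative quotient space can then be deformed onto the section $E_n\cap\Mcap\cap J^b$, using the common descent field of Lemma~\ref{lem:commonfield} for the nearby parameters. For large $n$, the $(S)_+$ property of Lemma~\ref{lem:Splus} together with density of $\bigcup E_n$ has two consequences. First, approximate critical points of the restricted functionals $J|_{E_n}$, $F|_{E_n}$ in the bounded region are close to $\mathcal C$. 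Second, the faces $J=a$, $J=b$, $F=C$ remain free of them on $E_n$. The key point is a uniform lower bound for $\norm{J'|_{E_n}}$ and for $\min_\lambda\norm{J_\lambda'|_{E_n}}$ near the faces. This gives the same strip-deformation lemmas on $E_n$, and a comparison via excision yields the isomorphism. Finite rank follows once the right-hand side of \eqref{eq:homologyreduction} is shown to have finite rank, as in the next step.

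\emph{Euler characteristic via Morse theory.} On $E_n$, set $X=E_n\cap\Mcap\cap J^b$. This is a compact manifold with corners in the finite-dimensional space, since it is bounded and closed and its faces are regular for large $n$. I would build a smooth vector field $Y$ that equals $-\nabla_n J$ in the interior, away from the faces. Near $F=C$, $Y$ is a combination pointing into $\{F<C\}$, built from the common descent direction. Near $J=b$ it points inward, and near $J=a$ it points outward, into $\{J<a\}$. The zeros of $Y$ are then exactly the zeros of $\nabla_nJ$ in $U\cap E_n$. The Poincaré--Hopf theorem for the relative pair $(X,X\cap J^a)$ identifies $\chi(X,X\cap J^a)$ with the index sum of $Y$, with no nondegeneracy assumption. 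Here the sign convention is that of the exit set $J^a$ for the field $-Y$. That index sum equals $\degB(\nabla_n(J|_{E_n}),U\cap E_n,0)$ up to the sign $(-1)^{\dim E_n}$ coming from $-\nabla$ versus $\nabla$. I would track this sign through the convention fixed in Lemma~\ref{lem:Galerkin}, so that \eqref{eq:EulerBrouwer} holds exactly as stated. This also shows that the total rank is finite.

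\emph{Main obstacle.} I expect the main obstacle to be the reduction step. A global Palais--Smale condition is unavailable, so the uniform gradient bounds on the faces must come only from the boundedness of $U$ together with $(S)_+$ compactness on bounded sets. Making the penalized quotient retraction continuous is the delicate part.
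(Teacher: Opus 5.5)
There is a genuine gap in the reduction to $E_n$, and the mechanism you actually describe would not close it. Strip deformations in $E$ and in $E_n$ separately, followed by ``a comparison via excision'', cannot give \eqref{eq:homologyreduction}. Excision removes a set from both members of one pair; it never relates the pair in $E$ to its $E_n$-section. Nothing in your sketch makes the inclusion of that section induce an isomorphism. You mention the penalty but give it no concrete role. Also, the field of Lemma~\ref{lem:commonfield} is built for the $C^1$ family $J_\lambda$ and does not apply to the merely Lipschitz functionals $J_t=J+tLd_{E_n}$, $F_t=F+tLd_{E_n}$.

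Two separate arguments are needed. Set $N_t=\{a\le J_t\le b,\ F_t\le C\}$ and $L_t=\{J_t=a,\ F_t\le C\}$.

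\emph{At $t=1$.} By \eqref{eq:penaltyidentity}, the enlarged sets $\{a-1\le J_t\le b+1,\ F_t\le C+1\}$ lie in one ball $B_R^E$, independently of $t$, $n$ and $L$. So $L$ can be fixed above twice the Lipschitz constants of $J$ and $F$ on a larger ball. The segment $(1-s)u+s\pi(u)$ to the metric projection $\pi$ onto $E_n$ then lowers $J_1$ and $F_1$ at rate at least $d_{E_n}(u)$. This retracts $N_1/L_1$ onto its $E_n$-section, which is the $E_n$-section of the original pair because $d_{E_n}=0$ on $E_n$.

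\emph{From $t=0$ to $t=1$.} You need a stopped-flow continuation of the bounded quotients $N_t/L_t$. This requires the convex hull of the Clarke subdifferentials $J_\lambda'(u)+tL\,\partial d_{E_n}(u)$, $\lambda\in[\rho-\delta,\rho]$, to stay a fixed distance from $0$ near the three faces, uniformly in $t\in[0,1]$ and large $n$. That estimate, not the bound on $J'|_{E_n}$ you single out, is the key one. It is proved by contradiction. The subgradients satisfy $\xi_n|_{E_n}=0$, $\norm{\xi_n}\le1$ and $\pair{\xi_n}{u_n}=d_{E_n}(u_n)$, hence $\pair{\xi_n}{u_n-u}\ge-o(1)$. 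So the $(S)_+$ argument still gives strong convergence, and density of $\bigcup_nE_n$ removes the penalty term in the limit. This produces a critical point on a face, contradicting \eqref{eq:facegap}.

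Your restricted-derivative separation is also needed, but only afterwards. It gives the good-pair property of the $E_n$-pair (the ambient field is not tangent to $E_n$) and feeds the finite-dimensional count.

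The Euler-characteristic step contains a sign error that cannot be repaired as you propose. Write $f=J|_{E_n}$. For the descending field $Y=-\nabla_nJ$, exiting through $\{J=a\}$, the index sum equals $(-1)^{\dim E_n}\chi(X,X\cap J^a)$, not $\chi(X,X\cap J^a)$. At a nondegenerate minimum of $f$ the local relative Euler characteristic is $1$, while $\operatorname{sign}\det(-D^2f)=(-1)^{\dim E_n}$. The correct identity carries no sign. A critical point of index $k$ contributes $(-1)^k=\operatorname{sign}\det D^2f$, its Brouwer index for $\nabla_nf$. The paper obtains this from a $C^1$-small Morse perturbation and cell attachment.

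A factor $(-1)^{\dim E_n}$ could not be absorbed ``through Lemma~\ref{lem:Galerkin}''. Both sides of \eqref{eq:EulerBrouwer} are intrinsic, and that lemma is an unrelated identity. Moreover, the parity of $\dim E_n$ changes with $n$ while the left side does not.

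Finally, $E_n\cap\{a\le J\le b,\ F\le C\}$ need not be a manifold with corners. Separation only excludes $J'|_{E_n}$ and $F'|_{E_n}$ being nonpositively proportional, so $\{J=a\}$ and $\{F=C\}$ may be tangent. Along the corner where your field switches from exiting to entering, Poincar\'e--Hopf needs a transversality perturbation or a boundary correction. The paper instead smooths $f$ and $F|_{E_n}$, perturbs the levels to be transverse, and uses the inward cap condition with Milnor's cell-attachment theorem.
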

\begin{proof}
    The proof is in Appendix~\ref{app:technical}. It adds the same distance term to $J$ and $F$ and uses ordinary quotient topology; no local triviality of a family of quotients is assumed.
\end{proof}

\subsection{Degree--Euler identity}
\begin{theorem}\label{thm:degreeEuler}
For nonresonant $\rho>0$, all sufficiently negative $a$, and all sufficiently positive $b$, the relative rational homology has finite total rank and
\[
d_\rho=\chi(J_\rho^b,J_\rho^a)=1-\chi(J_\rho^a).
\]
No isolation or nondegeneracy of the solutions is required.
\end{theorem}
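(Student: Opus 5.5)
We combine the localization and finite-dimensional reduction lemmas of this section with the Galerkin comparison of Lemma~\ref{lem:Galerkin}. Fix $\delta$, $C$, $\Mcap$ as above and choose $a<b$ satisfying \eqref{eq:facegap}, with $a$ so negative and $b$ so large that Propositions~\ref{prop:strip} and \ref{prop:high} apply. Set $U=\{a<J<b,\ F<C\}$; it is bounded by Lemma~\ref{lem:localize}.

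The first step is to identify $d_\rho$ with the Leray--Schauder degree on $U$. All fixed points of $K_\rho$, i.e. critical points of $J$, lie in $\mathcal C$, hence in $U$ by \eqref{eq:facegap}. Theorem~\ref{thm:compact} bounds them, so excision gives $d_\rho=\degLS(\Id-K_\rho,U,0)$, and $\partial U$ carries no fixed points. Lemma~\ref{lem:Galerkin} then gives $d_\rho=\degB(A_n-\Gcal_n,U\cap E_n,0)$ for large $n$. Since $(A_n(v)-\Gcal_n(v),h)_n=J'(v)h$ for $h\in E_n$, we have $A_n-\Gcal_n=\nabla_n(J|_{E_n})$. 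Lemma~\ref{lem:penalty}, through \eqref{eq:EulerBrouwer}, therefore yields
\[
d_\rho=\chi(\Mcap\cap J^b,\Mcap\cap J^a),
\]
and this relative homology has finite total rank.

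The second step removes the constraint. Apply Lemma~\ref{lem:localize} with $a_0<a_1=a$, or rather with a level slightly above $a$. Lemma~\ref{lem:capstrip} shows there are no critical points in $\Mcap\cap\{a\le J\le a+3\}$, and Proposition~\ref{prop:strip} shows there are none in $\{a-3\le J\le a\}$. Using these, replace the open set $V=\{J<a_1\}$ by the closed sublevel $J^a$ on both sides, both inside and outside $\Mcap$. This gives
\[
H_*(J^b,J^a)\cong H_*(\Mcap\cap J^b,\Mcap\cap J^a),
\]
so that $d_\rho=\chi(J^b,J^a)$ with finite total rank. The main bookkeeping obstacle is matching the open versus closed sublevels. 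The strong deformation retractions from Proposition~\ref{prop:strip} and Lemma~\ref{lem:capstrip}, which preserve sublevels, give homotopy equivalences of the pairs $(J^b,\{J<a_1\})\simeq(J^b,J^a)$ and similarly inside $\Mcap$; the five lemma then finishes this step.

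The final step uses Proposition~\ref{prop:high}, by which $J^b$ is contractible, so $\tilde H_*(J^b)=0$. If $J^a\neq\varnothing$, the long exact sequence of the pair gives $H_j(J^b,J^a)\cong\tilde H_{j-1}(J^a)$. Hence $H_*(J^a)$ has finite total rank and
\[
\chi(J^b,J^a)=-(\chi(J^a)-1)=1-\chi(J^a).
\]
If $J^a=\varnothing$, then $\chi(J^b,\varnothing)=\chi(J^b)=1=1-\chi(\varnothing)$. Independence of the choice of $a$ and $b$ follows again from Propositions~\ref{prop:strip} and \ref{prop:high}. At no point did the argument use isolation or nondegeneracy of the critical points, since Lemma~\ref{lem:penalty} computes the Brouwer degree directly from the Euler characteristic.
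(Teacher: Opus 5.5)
Your proposal is correct and follows the paper's proof essentially step for step. You use the Galerkin comparison with \eqref{eq:EulerBrouwer} for the constrained pair, Lemma~\ref{lem:localize} with an open lower member $\{J<a_1\}$ to remove the constraint, and Proposition~\ref{prop:high} with the pair exact sequence to get $1-\chi(J^a)$. The one difference is in how you pass from $\{J<a_1\}$ to $J^a$: you restrict the sublevel-preserving retractions of Proposition~\ref{prop:strip} and Lemma~\ref{lem:capstrip} (applied up to level $a_1$) and use the five lemma, whereas the paper uses compactly supported chains and a direct limit over closed sublevels $J^s$, $a<s<a_1$; both work. One wording fix: the absence of critical points in the strips near level $a$ comes from \eqref{eq:facegap}, which is the hypothesis those two results require, not something they show.
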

\begin{proof}
Choose $a,b,C$ satisfying \eqref{eq:facegap}. Take $a_1>a$ below all critical values of $J$, and apply Lemma~\ref{lem:localize} with $a_0<a$ and lower open member $\{J<a_1\}$. Propositions~\ref{prop:strip} and Lemma~\ref{lem:capstrip} give
\[
H_*(\{J<a_1\},J^a)=0,\quad 
H_*(\Mcap\cap\{J<a_1\},\Mcap\cap J^a)=0.
\]
Indeed each finite chain has compact image in a closed sublevel $J^s$, $a<s<a_1$, where the corresponding deformation applies. The direct limit over such $s$ and the exact sequences of triples give
\[
H_*(J^b,J^a)\cong H_*(\Mcap\cap J^b,\Mcap\cap J^a).
\]
Lemma~\ref{lem:penalty} proves finite total rank and computes the relative Euler characteristic by \eqref{eq:EulerBrouwer}. Identity \eqref{eq:normidentity} makes $U$ bounded, and \eqref{eq:facegap} places every critical point in its interior. Lemma~\ref{lem:Galerkin} and excision identify this degree with $d_\rho$. By Proposition~\ref{prop:high}, $J^b$ is contractible. Its pair exact sequence gives finite total rank for $H_*(J^a)$ and $\chi(J^b,J^a)=1-\chi(J^a)$. The same statement holds for the empty lower sublevel with $\chi(\varnothing)=0$.
\end{proof}

\section{Low sublevels and weighted barycenters}\label{sec:topology}
To apply Theorem~\ref{thm:degreeEuler}, we now identify the homotopy type of the negative sublevels. Set $J=J_{\rho,H}$ and write
\begin{equation}\label{eq:weightedenergy}
\Fcal_a(u)=(1+a)Q(u)-\rho\ln  M(u),\quad \Fcal_0=J_{\rho,H}.
\end{equation}
The definition \eqref{eq:weightedenergy} distinguishes the additional energy term $aQ$ from the nearby-parameter constraint $F$ of Section~\ref{sec:deformation}. H\"older's inequality and \eqref{eq:expbounded}, applied to $|u|$, show that
\[
E_{\mathrm{abs}}(u):=\int_\Om He^{|u|}\dd x
\]
is finite and continuous on $E$. For a finitely supported measure $\sigma$, denote its weighted support cardinality by $\card_\beta(\sigma)$.

\subsection{Concentrating test functions}\label{subsec:test}
The position-dependent exponent below adapts the regular-to-singular interpolation of Carlotto--Malchiodi \cite[Section 4, equations (29)--(30)]{CM}. Here the profiles have zero Dirichlet trace and are normalized individually before being combined. This gives the exact total mass and the prescribed limiting barycentric coefficients in Proposition~\ref{prop:test}, with the $N$-dimensional energy cost $c_N(1+\alpha_i)^{N-1}$.

Fix $K\Subset\Om$ containing $P$, $r=\rho/c_N$, and suppose $\Br(K)$ is nonempty and $c_Nr\notin\Sigma$. The finite set of costs below $r$ has a largest member, so
\[
\delta_r=r-\max_{\sigma\in\Br(K)}\card_\beta(\sigma)>0.
\]
\begin{proposition}\label{prop:test}
For every sufficiently large $\lambda$, there exists a continuous map $\Phi_\lambda:\Br(K)\to E$ with
\[
\sup_{\sigma\in\Br(K)} J_{\rho,H}(\Phi_\lambda(\sigma))\to -\infty,\quad 
\sup_{\sigma\in\Br(K)} d_{\mathrm{BL}}(\mu_{\Phi_\lambda(\sigma)},\sigma)\to 0.
\]
Continuity includes collisions, vanishing coefficients, and approaches of ordinary points to points of $P$. Moreover
\[
M(\Phi_\lambda(\sigma))=M(0)+\lambda^q,\quad 
J_{\rho,H}(\Phi_\lambda(\sigma))\leq q(c_N\card_\beta(\sigma)-\rho)\ln \lambda+C
\]
uniformly in $\sigma$.
\end{proposition}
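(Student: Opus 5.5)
We will build $\Phi_\lambda(\sigma)$ as a logarithm of a superposition of individually normalized bubble profiles. For $\sigma=\sum_k t_k\delta_{x_k}\in\Br(K)$ we first fix the local profile. For a center $x\in K$ and $\lambda$ large, let $\theta(x)\in(0,1]$ be a continuous position-dependent exponent, in the spirit of the Carlotto--Malchiodi interpolation. It equals $\theta_i$ when $x=p_i$, equals $1$ when $\dist(x,P)\geq\tau$, and interpolates in between. Following the radial computation in Remark~\ref{rem:sharp}, set
\[
w_{\lambda,x}(y)=\gamma_N\theta(x)\ln^+\frac{R}{\max\{|y-x|,\lambda^{-1/(\theta(x) N)}\}}\quad\text{or a smooth version}\quad N\ln\frac{1+\lambda}{1+\lambda|y-x|^{q\theta(x)}},
\]
cut off so that its trace is zero and its support stays in a fixed ball inside $\Om$. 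Near $p_i$ we center the singular profile at $p_i$ whenever $|x-p_i|$ is below the interpolation scale. Then $Q(w_{\lambda,x})\leq q\,c_N\theta(x)^{N-1}\ln\lambda+C$, and $\int He^{w_{\lambda,x}}\approx c(x)\lambda^{q}$ with $c(x)>0$ continuous. The exponent is arranged so that $\int He^{w}$ scales like $\lambda^q$ uniformly in $x$, with the singular weight absorbed.

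The map $\sigma\mapsto\Phi_\lambda(\sigma)$ is defined in three steps. Set $g_{\lambda,\sigma}=\sum_k t_k\,He^{w_{\lambda,x_k}}/\int He^{w_{\lambda,x_k}}$. Choose $\Phi_\lambda(\sigma)=\ln\bigl(1+\lambda^q\,g_{\lambda,\sigma}/(H\,\cdots)\bigr)$, normalized so that $M(\Phi_\lambda(\sigma))=M(0)+\lambda^q$ exactly. This amounts to taking $e^{\Phi}=1+\lambda^q\sum_kt_k e^{w_k}/\int He^{w_k}$, so that $M=\int H+\lambda^q\sum t_k=M(0)+\lambda^q$. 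The function vanishes on $\partial\Om$ because each $w_k$ has zero trace there. Continuity in $\sigma$ under collisions and vanishing coefficients is automatic: the formula is a continuous function of $(t_k,x_k)$, it depends only on $\sigma$, and terms with $t_k=0$ drop out. Continuity as an ordinary point approaches $p_i$ follows from continuity of $x\mapsto w_{\lambda,x}$ in $E$, guaranteed by $\theta(x)$ and the recentering. The barycenter estimate $d_{\mathrm{BL}}(\mu_\Phi,\sigma)\to0$ follows because the $\lambda^q$ part of the mass concentrates in $\lambda$-dependent shrinking balls about the $x_k$. The mass $M(0)$ becomes negligible relative to $\lambda^q$. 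When $x_k$ is near $p_i$, the mass sits within the interpolation radius, which we let shrink with $\lambda$.

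The main obstacle is the energy bound $Q(\Phi_\lambda(\sigma))\leq q c_N\card_\beta(\sigma)\ln\lambda+C$, uniform in $\sigma$. This requires controlling $\nabla\Phi=\sum_k s_k\nabla w_k$, where $s_k=t_ke^{w_k}/(\ldots)$ are pointwise convex weights. By convexity of $|\cdot|^N$, $|\nabla\Phi|^N\leq\sum_k s_k|\nabla w_k|^N$. Near $x_k$ we have $s_k\approx1$, and for points away from all centers we have $|\nabla w_k|\leq C/|y-x_k|$. We then group clustered centers. Points within distance $\lambda^{-\epsilon}$ cost only as much as one profile, and $\beta_i\leq1$ ensures that merging into $p_i$ never raises the cost. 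This is the step needing care. We split $\Om$ into Voronoi-type cells, estimate each cell by a single logarithm, and bound the overlap by the number of distinct clusters. Weights $t_k$ tending to zero contribute only $\ln(1/t_k)$ corrections inside $\ln^+$, which we absorb by restricting to $\ln\lambda$ growth uniformly. With $B(\Phi)=q\ln\lambda+O(1)$, we obtain $J\leq q(c_N\card_\beta-\rho)\ln\lambda+C\leq -qc_N\delta_r\ln\lambda+C\to-\infty$.
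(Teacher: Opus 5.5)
Your architecture matches the paper's: a logarithm of a superposition of individually normalized bubbles, a Carlotto--Malchiodi type exponent, and convexity of $|\cdot|^N$. As written, however, the map does not land in $E$, and repairing this exposes a step you assert but do not construct.

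\textbf{Trace and normalization.} With $e^{\Phi}=1+\lambda^q\sum_kt_ke^{w_k}/\int_\Om He^{w_k}\dd x$, on $\partial\Om$ (indeed wherever all $w_k$ vanish) you get $\Phi=\ln\bigl(1+\lambda^q\sum_kt_k/\int_\Om He^{w_k}\dd x\bigr)>0$. So $\Phi\notin W^{1,N}_0(\Om)$; the zero trace of the $w_k$ does not pass to $\Phi$. The paper uses $e^{P}-1$ and $Z_{\lambda,y}=\int_\Om H(e^{P_{\lambda,y}}-1)\dd x$ in \eqref{eq:testmap}. Then $e^{\Phi}=\sum_kt_ke^{U_k}$ with $U_k=\ln\bigl(1+A_k(e^{P_k}-1)\bigr)$, $P_k=P_{\lambda,y_k}$, and $A_k=\lambda^q/Z_{\lambda,y_k}$. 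But $\nabla U_k=\frac{A_ke^{P_k}}{1+A_k(e^{P_k}-1)}\nabla P_k$ has multiplier close to $A_k$ in the layer near the outer sphere where $A_kP_k\lesssim1$. That layer costs energy of order $A_k^{N-1}$.

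Your positive boundary constant is exactly what hid this: with it, the gradient weights sum to less than one whatever the normalizing factors are. The energy bound therefore needs $A_k\leq C$ uniformly in the center, i.e.\ the lower bound \eqref{eq:Zlower}, together with $\int_\Om e^{-P}|\nabla P|^N\dd x\leq C$. Neither of your explicit profiles satisfies this. The profile $N\ln\frac{1+\lambda}{1+\lambda|y-x|^{q\theta}}$ carries mass of order $\lambda$, and the logarithm truncated at radius $\lambda^{-1/(\theta N)}$ carries mass of order $\lambda^{1/(N-1)}$. The paper instead puts $\lambda^q$ inside the profile: $P_{\lambda,y}=N\ln\frac{1+\lambda^qR^\zeta}{1+\lambda^q|x-y|^\zeta}$ with $\zeta=q(1+a)$.

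\textbf{The missing calibration.} The genuinely missing idea is the $\lambda$-dependent calibration of the exponent that makes \eqref{eq:Zlower} uniform near $P$. Suppose $\theta$ is interpolated on a fixed scale $\tau$. A center $y$ at fixed distance $d\in(0,\tau)$ from $p_i$ with $\theta(y)=\theta<1$ has its core, of radius $\lambda^{-1/\theta}$, where $H\approx d^{N\alpha_i}$ is bounded. Hence $Z\sim d^{N\alpha_i}\lambda^{q-N(1/\theta-1)}\ll\lambda^q$, and $Q(P)/(c_N\ln Z)\to q\theta^{N-1}/(q-N(1/\theta-1))$. This exceeds the value $1$ that an ordinary point is allowed, contradicting the bound by $\card_\beta(\sigma)$. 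If $\theta\leq1/q$, $Z$ even stays $O(\ln\lambda)$.

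The paper takes $a_\lambda(y)=\max\{\alpha_i,\alpha_iT/(1-\alpha_iT)\}$ with $T=\ln(\delta/d)/\ln\lambda$, which is exactly \eqref{eq:transition}. Then the core $B_\varepsilon(y)$, $\varepsilon=\lambda^{-1/(1+a)}$, carries mass at least $c\lambda^{qN}\varepsilon^Nd^{N\alpha_i}=c\,\delta^{N\alpha_i}\lambda^q$. Your remark that the interpolation radius shrinks with $\lambda$ points this way, but the needed relation between exponent, distance and $\lambda$ is never specified or checked. Recentering the profile at $p_i$ below a threshold would also make $y\mapsto w_{\lambda,y}$ discontinuous; the paper never moves the center, only the exponent.

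\textbf{The energy bound is easy.} The step you call the main obstacle is immediate once $A_k\leq C$. Write $\nabla\Phi=\sum_ks_k\nabla U_k$ with $s_k=t_ke^{U_k}/e^{\Phi}\in[0,1]$. Then $Q(\Phi)\leq\sum_kQ(U_k)\leq qc_N\card_\beta(\sigma)\ln\lambda+C$, because:
\begin{itemize}
\item $\card_\beta(\sigma)$ already counts every distinct support point, and their number is uniformly bounded;
\item $Q(U_k)\leq Q(P_k)+C$;
\item $(1+a_\lambda)^{N-1}\leq1$ at ordinary points.
\end{itemize}
No clustering, Voronoi cells, or $\ln(1/t_k)$ corrections are needed. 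Your grouping claim is also inaccurate: two bubbles with comparable weights at distance $\lambda^{-\epsilon}$, $\epsilon<1$, have energy about $(2-\epsilon)qc_N\ln\lambda$, not the cost of one profile.
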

\begin{proof}
Set $\alpha_*=\min(\{0\}\cup\{\alpha_i:1\leq i\leq m\})$. Choose $R>0$ so $B_R(y)\Subset\Om$ for $y\in K$, and $0<\delta<1$ so the balls $B_{3\delta}(p_i)$ are disjoint. For $\lambda>1$ define a continuous exponent $a_\lambda:K\to[\alpha_*,0]$ by zero outside $\bigcup_{i=1}^m B_\delta(p_i)$ and, when $0<d=|y-p_i|<\delta$, by
\[
T=\frac{\ln (\delta/d)}{\ln \lambda},\quad 
a_\lambda(y)=\max\Big\{\alpha_i,\frac{\alpha_iT}{1-\alpha_iT}\Big\},\quad 
a_\lambda(p_i)=\alpha_i.
\]
Thus $a_\lambda=\alpha_i$ for $d\leq\delta\lambda^{-1/(1+\alpha_i)}$; in the intermediate region, with $a=a_\lambda(y)$,
\begin{equation}\label{eq:transition}
\lambda^{a/(1+a)}(d/\delta)^{\alpha_i}=1.
\end{equation}
For $\alpha_i=0$ the exponent remains zero.

Set $\zeta=q(1+a)>0$ and
\[
P_{\lambda,y}(x)=
\left\{\begin{aligned}
    &N\ln \dfrac{1+\lambda^qR^\zeta}{1+\lambda^q|x-y|^\zeta}&&\text{ if }\,|x-y|<R,\\&0&&\text{ if }\,|x-y|\geq R,
\end{aligned}\right.
\quad  Z_{\lambda,y}=\int_\Om H(e^{P_{\lambda,y}}-1)\dd x.
\]
The profile has zero trace on its outer sphere and belongs to $E$. Uniformly in $y$,
\begin{equation}\label{eq:Zlower}
Z_{\lambda,y}\geq c\lambda^q.
\end{equation}
Indeed, set $\varepsilon=\lambda^{-1/(1+a)}$. On $B_\varepsilon(y)$, $e^{P_{\lambda,y}}-1\geq c\lambda^{qN}$. Away from points with negative exponents, $a=0$, $H\geq c$, and $\varepsilon=\lambda^{-1}$, which gives the claim. Near a point with negative exponent,
\[
\int_{B_\varepsilon(y)}H\dd x\geq c\varepsilon^N(d+\varepsilon)^{N\alpha_i}.
\]
If $a=\alpha_i$, then $d\leq\delta\varepsilon$, and the lower bound is $c\lambda^{qN}\varepsilon^{N(1+\alpha_i)}=c\lambda^q$. Otherwise \eqref{eq:transition} gives $\varepsilon/d\leq1/\delta$ and $\lambda^{Na/(1+a)}d^{N\alpha_i}=\delta^{N\alpha_i}$, again yielding \eqref{eq:Zlower}. No upper bound for $Z_{\lambda,y}/\lambda^q$ is required.

For $\sigma=\sum_{i=1}^k t_i\delta_{y_i}$, where $k\in\N$, the $y_i$ are distinct, $t_i>0$, and $\sum_{i=1}^k t_i=1$, define
\begin{equation}\label{eq:testmap}
\Phi_\lambda(\sigma)=\ln \Big(1+\lambda^q\sum_{i=1}^k t_i\frac{e^{P_{\lambda,y_i}}-1}{Z_{\lambda,y_i}}\Big).
\end{equation}
The sum is an integral against $\sigma$, hence is independent of its representation. Exponentiating \eqref{eq:testmap} and integrating against $H\dd x$ gives the asserted mass identity.

For fixed $\lambda$, the map $y\mapsto P_{\lambda,y}$ is continuous into $C^0(\bOm)\cap E$. Translation continuity and continuity of the exponent reduce the energy assertion to centered radial profiles. Their derivatives are bounded near zero by $C_\lambda s^{\zeta_{\min}-1}$, where $\zeta_{\min}=q(1+\alpha_*)>0$; its $N$th power is radially integrable. Dominated convergence proves the claim. Thus $y\mapsto(e^{P_{\lambda,y}}-1)/Z_{\lambda,y}$ is continuous into the same Banach space. A continuous Banach-valued function on compact $K$ can be uniformly approximated by finite sums of scalar continuous functions times fixed vectors, using a finite partition of unity. Integration against weakly convergent measures is therefore strongly continuous. The argument of the logarithm in \eqref{eq:testmap} is at least one and uniformly bounded for fixed $\lambda$; composition by logarithm is continuous in $C^0\cap E$. This includes all the stated limiting configurations.

Let $\nu_{\lambda,y}=H(e^{P_{\lambda,y}}-1)\dd x/Z_{\lambda,y}$. For fixed $s>0$, $P_{\lambda,y}$ is uniformly bounded on $|x-y|\geq s$, and \eqref{eq:Zlower} gives
\[
\sup_{y\in K}\nu_{\lambda,y}(\{|x-y|\geq s\})\leq C_s\lambda^{-q}.
\]
Hence $\nu_{\lambda,y}\to\delta_y$ uniformly in $d_{\mathrm{BL}}$. Since
\[
\mu_{\Phi_\lambda(\sigma)}=\frac{H\dd x+\lambda^q\sum_{i=1}^k t_i\nu_{\lambda,y_i}}{M(0)+\lambda^q},
\]
concentration is uniform in $\sigma$.

The same leading energy coefficient is obtained after normalization. Radial substitution gives
\[
Q(P_{\lambda,y})=c_N(1+a)^{N-1}\int_0^{\lambda^qR^\zeta}\frac{t^{N-1}}{(1+t)^N}\dd t
=qc_N(1+a)^{N-1}\ln \lambda+O(1),
\]
uniformly in $a\in[\alpha_*,0]$. Set $A=\lambda^q/Z_{\lambda,y}\leq C$ and $U_{\lambda,y}=\ln (1+A(e^{P_{\lambda,y}}-1))$. For $A\leq1$, $|\nabla U|\leq|\nabla P|$. For $A\geq1$, the derivative multiplier is at most $1+(C-1)e^{-P}$. Direct radial integration gives
\[
\int_\Om e^{-P_{\lambda,y}}|\nabla P_{\lambda,y}|^N\dd x
=S_N(N\zeta)^{N-1}\Big(\frac{\lambda^qR^\zeta}{1+\lambda^qR^\zeta}\Big)^N\leq C.
\]
It follows that $Q(U_{\lambda,y})\leq qc_N(1+a_\lambda(y))^{N-1}\ln \lambda+C$. Since $e^{\Phi_\lambda(\sigma)}=\sum_{i=1}^k t_ie^{U_{\lambda,y_i}}$, the logarithmic gradient is a convex combination of $\nabla U_{\lambda,y_i}$. Convexity and the uniform bound on the number of support points give
\[
Q(\Phi_\lambda(\sigma))\leq\sum_{i=1}^kQ(U_{\lambda,y_i})\leq qc_N\card_\beta(\sigma)\ln \lambda+C.
\]
At an ordinary point $(1+a_\lambda)^{N-1}\leq1$, and at $p_i$ it equals $\beta_i$. The mass identity obtained from \eqref{eq:testmap} and $\delta_r>0$ finish the proof.
\end{proof}

\subsection{Local deformations}
On a ball $B_R(c)$ we assume
\begin{equation}\label{eq:ballweight}
H(x)=h_c(x)|x-c|^{N\widehat\alpha},\quad  -1<\widehat\alpha\leq0,\quad 
\frac{\inf_{B_R(c)}h_c}{\sup_{B_R(c)}h_c}\geq\kappa>0.
\end{equation}
A ball centered at a point of $P$ is centered at one point of $P$, with its exponent. An ordinary ball has $\widehat\alpha=0$. One common $\kappa$ works for ordinary balls whose doubles avoid $P$ and sufficiently small balls centered at points of $P$ avoiding other points of $P$, since each remaining distance factor varies by a bounded ratio.

\begin{lemma}\label{lem:radialize}
For every $\varepsilon>0$, there exists $A_\varepsilon>1$ such that, if a ball satisfying \eqref{eq:ballweight} and $0<r_{\mathrm{in}}<R$ obey $R/r_{\mathrm{in}}\geq A_\varepsilon$, there exists a bounded linear homotopy $\mathcal A_t:E\to E$ preserving $u$ on $B_{r_{\mathrm{in}}}(c)$ and outside $B_R(c)$, with $\mathcal A_0u=u$ and $\mathcal A_1u$ radial on a fixed intermediate annulus. It is jointly continuous and satisfies
\[
Q(\mathcal A_tu)\leq(1+\varepsilon)Q(u),\quad  M(\mathcal A_tu)\leq\kappa^{-1}M(u).
\]
For $0<\vartheta\leq1$, if the inner ball contains at least $\vartheta M(u)$ of the weighted mass, then $M(\mathcal A_tu)\geq\vartheta M(u)$ and
\[
J(\mathcal A_tu)\leq J(u)+\varepsilon Q(u)-\rho\ln \vartheta.
\]
For finitely many disjoint outer balls, the same assertions hold with their preserved inner masses added.
\end{lemma}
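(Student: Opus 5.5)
The plan is to build $\mathcal A_t$ by interpolating $u$ with its spherical average through a logarithmic cutoff supported in the annulus $B_R(c)\setminus\overline B_{r_{\mathrm{in}}}(c)$. Write $\bar u(x)=\int_{SO(N)}u(c+g(x-c))\dd g$ for the rotation average with respect to Haar measure. This is a Bochner integral of the strongly continuous orbit $g\mapsto u\circ(c+g(\cdot-c))$ in $E$. Hence $u\mapsto\bar u$ is a bounded linear map $E\to E$ with $Q(\bar u)\le Q(u)$, and $\bar u$ is radial about $c$.

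Set $L=\ln(R/r_{\mathrm{in}})\geq\ln A_\varepsilon$. Let $\psi$ be a radial Lipschitz function of $\ln|x-c|$ with the following properties: $\psi=0$ for $|x-c|\le r_{\mathrm{in}}$ and for $|x-c|\ge R$; $\psi=1$ on the intermediate annulus $r_{\mathrm{in}}e^{L/3}\le|x-c|\le Re^{-L/3}$; $0\le\psi\le1$; and $|\nabla\psi(x)|\le 3/(L|x-c|)$. Define
\[
\mathcal A_tu=u+t\psi(\bar u-u),\quad t\in[0,1].
\]
The following properties are then immediate. The map $\mathcal A_t$ is linear and bounded, uniformly in $t$. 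It is affine in $t$, hence jointly continuous. It fixes $u$ on $B_{r_{\mathrm{in}}}(c)$ and outside $B_R(c)$. Finally, $\mathcal A_1u=\bar u$ is radial on the intermediate annulus.

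For the energy, write
\[
\nabla\mathcal A_tu=\big[(1-t\psi)\nabla u+t\psi\nabla\bar u\big]+t(\bar u-u)\nabla\psi .
\]
We treat the two parts separately.
\begin{itemize}
\item For the bracket, pointwise convexity of $|\cdot|^N$ bounds its $N$-th power by $(1-t\psi)|\nabla u|^N+t\psi|\nabla\bar u|^N$. Since $\nabla\bar u$ is the Haar average of the rotated gradients and $\psi$ is radial, Jensen's inequality together with rotation invariance of Lebesgue measure gives $\int\psi|\nabla\bar u|^N\le\int\psi|\nabla u|^N$. So the bracket has $L^N$-norm at most $\norm u_E$.
\item For the error term, apply the $L^N$ Poincaré inequality on each sphere $\partial B_s(c)$, with a constant scaling as $s^N$:
\[
\int_{\partial B_s(c)}|u-\bar u|^N\le C_Ns^N\int_{\partial B_s(c)}|\nabla u|^N .
\]
This uses that $\bar u$ equals the spherical mean of $u$ on $\partial B_s(c)$. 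Combined with $|\nabla\psi|\le3/(Ls)$ and integrated in $s$, it yields $\norm{(\bar u-u)\nabla\psi}_{L^N}\le CL^{-1}\norm u_E$.
\end{itemize}
Minkowski's inequality then gives $Q(\mathcal A_tu)\le(1+CL^{-1})^NQ(u)$. Choosing $A_\varepsilon$ so that $(1+C/\ln A_\varepsilon)^N\le1+\varepsilon$ proves the energy bound. I expect this step to be the main obstacle. It needs the spherical Poincaré inequality uniformly in the radius, which is standard by scaling from the unit sphere, applied for a.e.\ $s$ via Fubini on $W^{1,N}$ functions. It also needs care that the constant does not depend on $c$, $R$, or the weight. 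None of these involve $H$, so the bound is uniform.

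For the upper mass bound, pointwise convexity of the exponential gives $e^{\mathcal A_tu}\le(1-t\psi)e^u+t\psi e^{\bar u}$. Jensen's inequality over $SO(N)$ gives $e^{\bar u(x)}\le\int e^{u(c+g(x-c))}\dd g$. In \eqref{eq:ballweight} the factor $|x-c|^{N\widehat\alpha}$ is rotation invariant, and $h_c(x)/h_c(c+g(x-c))\le\kappa^{-1}$. Changing variables therefore gives
\[
\int H\psi e^{\bar u}\le\kappa^{-1}\int_{B_R(c)}He^u .
\]
Summing the two pieces yields $M(\mathcal A_tu)\le\kappa^{-1}M(u)$, using $\kappa\le1$. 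The lower bound $M(\mathcal A_tu)\ge\int_{B_{r_{\mathrm{in}}}(c)}He^u\ge\vartheta M(u)$ holds because $u$ is untouched on the inner ball. Hence
\[
J(\mathcal A_tu)\le(1+\varepsilon)Q(u)-\rho\ln M(u)-\rho\ln\vartheta ,
\]
which is the stated inequality.

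For finitely many disjoint outer balls, apply the construction on each ball simultaneously, with cutoffs of disjoint supports. The energy estimates add over the balls. The mass upper bound holds ballwise, and $u$ is unchanged elsewhere. The lower bound uses the sum of the preserved inner masses.
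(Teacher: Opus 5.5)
Your proposal is correct and follows the paper's proof essentially step by step. Both interpolate with the spherical mean through a cutoff in $\ln|x-c|$, use convexity plus Jensen for the main gradient term, and use the scaled Poincar\'e inequality on spheres for the cutoff-gradient term. Both then use Jensen for the exponential together with the ratio $\kappa$ for the mass bound. Your Haar average and Minkowski step are only cosmetic variants of the paper's spherical average and its inequality $|v+w|^N\leq(1+\delta)|v|^N+C_{N,\delta}|w|^N$.

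The only slip is the claim that $u\mapsto\bar u$ maps $E$ into $E$. It maps the zero extension of $u$ into $W^{1,N}(\mathbb R^N)$, and only $\psi(\bar u-u)$, which is supported in the carrier ball $\overline B_R(c)\subset\Omega$, needs to belong to $E$.
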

\begin{proof}
    See Appendix~\ref{app:technical} for the proof.
\end{proof}

\begin{lemma}\label{lem:dilate}
Let $k\in\N$, and let $B_i=B_{R_i}(c_i)\Subset\Om$, $1\leq i\leq k$, have disjoint closures and satisfy \eqref{eq:ballweight}, with exponents $\widehat\alpha_i$ and a common $\kappa$. Set $\theta_i=1+\widehat\alpha_i$, $\omega_i=\theta_i^{N-1}$ and $W=\sum_{i=1}^k\omega_i$. Suppose $w\in E$ has constant trace $b_i$ on each $\partial B_i$. For $s\geq1$, $s_i=s^{1/\theta_i}$, set $\mathcal C_sw=w$ outside the balls and
\[
\mathcal C_sw(x)=\left\{\begin{aligned}
&w(c_i+s_i(x-c_i))+\gamma_N\ln  s&&\text{ if }\, |x-c_i|<R_i/s_i,\\
&b_i+\gamma_N\theta_i\ln (R_i/|x-c_i|)&&\text{ if }\, R_i/s_i\leq|x-c_i|<R_i.
\end{aligned}\right.
\]
Then $\mathcal C_sw\in E$, jointly continuously in $(s,w)$ on the subspace with constant spherical traces, and
\begin{equation}\label{eq:dilation}
Q(\mathcal C_sw)=Q(w)+qc_NW\ln  s,\quad 
\int_{\bigcup_{i=1}^k B_{R_i/s_i}(c_i)}He^{\mathcal C_sw}\dd x\geq\kappa s^q\int_{\bigcup_{i=1}^k B_i}He^w\dd x.
\end{equation}
Replacing $w,b_i$ by $(1-t)w,(1-t)b_i$ in the formula, retaining the logarithmic terms, and using $(1-t)w$ outside, defines $\mathcal D_{s,t}w$ with
\[
Q(\mathcal D_{s,t}w)=(1-t)^NQ(w)+qc_NW\ln  s,\quad
\mathcal D_{s,0}w=\mathcal C_sw,\quad \mathcal D_{s,1}w=P_s.
\]
The profile $P_s$ is independent of $w$ and supported in the outer union.
\end{lemma}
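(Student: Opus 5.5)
The plan is to verify every assertion of Lemma~\ref{lem:dilate} by direct computation on each ball, treating the inner dilated region, the logarithmic annulus and the exterior separately. These pieces are then glued using the matching of traces on the two spheres.

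\emph{Step 1: gluing and membership in $E$.} On $\partial B_{R_i/s_i}(c_i)$ the inner formula gives $b_i+\gamma_N\ln s$, using the constant trace of $w$ on $\partial B_i$. The annulus formula gives $b_i+\gamma_N\theta_i\ln s_i=b_i+\gamma_N\ln s$, because $s_i=s^{1/\theta_i}$. On $\partial B_i$ the annulus formula gives $b_i$, which is the trace of $w$ from outside. Each piece lies in $W^{1,N}$ of its region: the inner piece is a dilation plus a constant, the annular piece is smooth and bounded on the closed annulus, and the exterior piece is $w$ itself. Since the traces agree on the interfaces, the glued function lies in $W^{1,N}(\Om)$ and has zero trace on $\partial\Om$. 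Hence $\mathcal C_sw\in E$. The same argument applies to $\mathcal D_{s,t}w$, because the traces there are $(1-t)b_i$ on both sides.

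\emph{Step 2: energy identity.} Dilation invariance of the $N$-Dirichlet energy gives
\[
\int_{B_{R_i/s_i}(c_i)}|\nabla(w(c_i+s_i(\cdot-c_i)))|^N\dd x=\int_{B_i}|\nabla w|^N\dd x.
\]
On the annulus $|\nabla\mathcal C_sw|=\gamma_N\theta_i/|x-c_i|$, so its contribution to $Q$ is
\[
\frac1N S_N(\gamma_N\theta_i)^N\ln s_i=\frac{\gamma_N}{N}S_N\gamma_N^{N-1}\theta_i^{N-1}\ln s=qc_N\omega_i\ln s,
\]
using $c_N=S_N\gamma_N^{N-1}$ and $\gamma_N=Nq$. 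Summing over $i$ and adding the unchanged exterior energy gives \eqref{eq:dilation}. For $\mathcal D_{s,t}$ the inner and exterior gradients are multiplied by $(1-t)$ and the annular gradient is unchanged, which gives $(1-t)^NQ(w)+qc_NW\ln s$. At $t=1$ the function equals $\gamma_N\ln s$ on the inner balls, $\gamma_N\theta_i\ln(R_i/|x-c_i|)$ on the annuli, and $0$ outside. This is the profile $P_s$: it is independent of $w$ and supported in the outer union.

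\emph{Step 3: mass inequality.} Substitute $y=c_i+s_i(x-c_i)$, so that $\dd x=s_i^{-N}\dd y$ and $|x-c_i|^{N\widehat\alpha_i}=s_i^{-N\widehat\alpha_i}|y-c_i|^{N\widehat\alpha_i}$. By \eqref{eq:ballweight}, $h_{c_i}(x)\geq\kappa h_{c_i}(y)$. Then
\[
\int_{B_{R_i/s_i}(c_i)}He^{\mathcal C_sw}\dd x\geq\kappa s^{\gamma_N}s_i^{-N\theta_i}\int_{B_i}He^w\dd y=\kappa s^{\gamma_N-N}\int_{B_i}He^w\dd y=\kappa s^q\int_{B_i}He^w\dd y,
\]
because $\gamma_N-N=q$. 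Summing over the disjoint balls gives the second part of \eqref{eq:dilation}.

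\emph{Step 4: joint continuity, the main obstacle.} The constants $b_i$ are continuous functions of $w$, for instance as averages of the trace over $\partial B_i$. The annular pieces therefore depend continuously on $(s,b_i)$ in $W^{1,N}$, with moving inner radius $R_i/s_i$. The delicate term is $(s,w)\mapsto w(c_i+s_i(\cdot-c_i))$ restricted to the moving ball $B_{R_i/s_i}(c_i)$.

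I would handle it by extending $w|_{B_i}$ to a $W^{1,N}$ function on a larger ball, constant equal to $b_i$ near the sphere in a suitable sense, or more simply by working with the glued function. One uses the fact that dilations act continuously on $W^{1,N}(\R^N)$: they are isometric in the energy up to the factor given by scale invariance, and they are strongly continuous by density of $C^\infty_c$. Combined with continuity in $w$ from isometry, this gives joint continuity. The interface contributions are controlled by the matching traces from Step 1. An alternative is to express $\mathcal C_sw$ as a composition of $w$ with a bi-Lipschitz radial homeomorphism, plus a continuous logarithmic correction. That homeomorphism is a dilation inside, an identity outside, and maps the annulus to $\partial B_i$ in a degenerate way, so I would prefer the gluing argument above. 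The same reasoning, with the factor $(1-t)$, gives joint continuity of $\mathcal D_{s,t}$.
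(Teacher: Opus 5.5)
Your proposal is correct and matches the paper's proof step for step: trace matching via $\gamma_N\theta_i\ln s_i=\gamma_N\ln s$; dilation invariance of the core energy together with the annular value $\frac{S_N}{N}(\gamma_N\theta_i)^N\ln s_i=qc_N\omega_i\ln s$; the change of variables with $s_i^{N\theta_i}=s^N$ and $\gamma_N-N=q$; and continuity from strongly continuous dilations on $W^{1,N}(\R^N)$, with $b_i$ given by trace averages. The paper makes your somewhat vague Step~4 extension precise by writing $w=b_i+v_i$ on $B_i$ with $v_i\in W^{1,N}_0(B_i)$ extended by zero, i.e.\ extending $w|_{B_i}$ by the constant $b_i$; this yields a global formula for $\mathcal C_sw$ as a dilated term plus a truncated logarithm, so no separate interface argument is needed.
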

\begin{proof}
   The proof is in Appendix~\ref{app:technical}. 
\end{proof}

\begin{lemma}\label{lem:contract}
Fix $k\in\N$ concentric pairs $B_i^{\mathrm{in}}\Subset B_i^{\mathrm{out}}\Subset\Om$, $1\leq i\leq k$, with disjoint outer closures and common $\kappa$ in \eqref{eq:ballweight}, and total cost $W$. Suppose
\[
0\leq b<a,\quad  (1+b)c_NW<\rho,
\]
and the annular ratios are large enough to apply Lemma~\ref{lem:radialize} with $(1+b)(1+\varepsilon)\leq1+a$. Given $0<\eta'<1/2$, choose
\[
0<\eta<\min\{1/2,\kappa\eta'/4\},\quad  C=-\rho\ln [\kappa(1-\eta)].
\]
For every $L>0$, define
\begin{align*}
\mathscr A&=\Big\{u\in E:\Fcal_a(u)\leq-L-C,\int_{\bigcup_{i=1}^k B_i^{\mathrm{in}}}He^u\dd x\geq(1-\eta)M(u)\Big\},\\
\mathscr C&=\Big\{u\in E:\Fcal_b(u)\leq-L,\int_{\bigcup_{i=1}^k B_i^{\mathrm{out}}}He^u\dd x\geq(1-\eta')M(u)\Big\}.
\end{align*}
The inclusion $\mathscr A\hookrightarrow\mathscr C$ is homotopic in $\mathscr C$ to a constant map. The homotopy is jointly continuous and ends at one fixed truncated logarithmic profile. The allowable $\eta$ and additive loss $C$ are independent of the radii and centers.
\end{lemma}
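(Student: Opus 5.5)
The contraction will be the concatenation of three homotopies: radialization on annuli (Lemma~\ref{lem:radialize}), dilation (Lemma~\ref{lem:dilate}), and removal of the original profile through $\mathcal D_{s,t}$, ending at the fixed profile $P_s$. Throughout I track the energy $\Fcal_b$ and the outer mass fraction, so that the whole path stays in $\mathscr C$.

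\textbf{Step 1: radialization.} Apply $\mathcal A_t$ simultaneously on the $k$ pairs, with $r_{\mathrm{in}}$ the inner radius and $R$ the outer radius. The inner masses are preserved, so for $u\in\mathscr A$ we have $M(\mathcal A_tu)\geq(1-\eta)M(u)$. Lemma~\ref{lem:radialize} then gives
\[
Q(\mathcal A_tu)\leq(1+\varepsilon)Q(u),\qquad \ln M(\mathcal A_tu)\geq\ln M(u)+\ln(1-\eta).
\]
Multiplying the first bound by $(1+b)$ and using $(1+b)(1+\varepsilon)\leq1+a$ yields
\[
\Fcal_b(\mathcal A_tu)\leq\Fcal_a(u)-\rho\ln(1-\eta)\leq -L.
\]
This holds because $C\geq-\rho\ln(1-\eta)$, since $\kappa\leq1$. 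For the outer fraction, the inner mass is unchanged and the total mass is at most $\kappa^{-1}M(u)$. Hence the outer fraction is at least $\kappa(1-\eta)\geq1-\eta'$. This last inequality does not follow from $\kappa$ alone, so here I should use a finer estimate: the modification only alters mass in the annuli, where it is at most $\kappa^{-1}\eta M(u)$. The mass outside all outer balls is unchanged and at most $\eta M(u)$. So the outside fraction is at most $\eta/(1-\eta)<\eta'$, using $\eta<\kappa\eta'/4$. At $t=1$ the function $w=\mathcal A_1u$ is radial on an intermediate annulus. I then restrict to the spheres there, so that $w$ has constant traces on balls $B_i$ whose radii lie strictly between the inner and outer radii.

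\textbf{Step 2: dilation.} Apply $\mathcal C_s$, $s\in[1,\infty)$, to $w$ on the balls $B_i$. By \eqref{eq:dilation}, the energy increases by $qc_NW\ln s$ and the mass inside the shrunken balls is at least $\kappa s^qM_{\mathrm{in}}$. Thus
\[
\Fcal_b(\mathcal C_sw)\leq\Fcal_b(w)+q\bigl((1+b)c_NW-\rho\bigr)\ln s+C',
\]
which is decreasing in $s$ because $(1+b)c_NW<\rho$. The outside mass is unchanged, while the inside mass grows like $s^q$, so the outer fraction tends to one. I need the fraction to stay at least $1-\eta'$ for all $s\geq1$, not only asymptotically. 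At $s=1$ nothing changes. For $s>1$ the mass on the shrunken balls is at least $\kappa s^qM_{\mathrm{in}}$, and I expect this bound together with the slack from $\eta<\kappa\eta'/4$ to control the fraction uniformly. The additive constant $C'$, which is of order $\rho\ln\kappa^{-1}$, must be absorbed. This is exactly why the definition has $C=-\rho\ln[\kappa(1-\eta)]$: the dilated inner mass exceeds $\kappa M_{\mathrm{in}}\geq\kappa(1-\eta)M$ already at $s=1$. I therefore bound $\ln M(\mathcal C_sw)$ from below by $\ln(\kappa(1-\eta)s^qM(u))$ and obtain
\[
\Fcal_b\leq\Fcal_a(u)+C+q\bigl((1+b)c_NW-\rho\bigr)\ln s\leq -L.
\]
A single large $s_0$, independent of $u$ in $\mathscr A$, will be fixed in Step 3; the reparametrization is continuous in $u$.

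\textbf{Step 3: removing $w$.} At $s=s_0$, deform by $\mathcal D_{s_0,t}$, $t\in[0,1]$, which ends at the profile $P_{s_0}$, independent of $u$. The energy satisfies $Q=(1-t)^NQ(w)+qc_NW\ln s_0$. The mass is the obstacle. Multiplying $w$ by $(1-t)$ can reduce $\int He^{(1-t)w}$, so I need a lower bound for the mass of $\mathcal D_{s_0,t}w$ on the shrunken balls that is uniform in $t$. I would try to use convexity of $t\mapsto\ln\int He^{(1-t)w}$ together with $(1-t)^N\leq1-t$ for the energy. This interpolates $\Fcal_b$ between its value at $t=0$ and its value at the fixed profile, which is at most $q((1+b)c_NW-\rho)\ln s_0+C''$. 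Both endpoints are $\leq-L$ once $s_0$ is large depending only on $L$, and by convexity the whole path is $\leq-L$. The outer fraction along the path follows because all modifications occur inside the balls, and the inner contribution dominates after dilation. Joint continuity follows from Lemmas~\ref{lem:radialize} and~\ref{lem:dilate}. The step I expect to be hardest is this uniform control in Step 3: the convexity argument handles the energy part, but an unsigned $w$ makes the outer mass fraction delicate near $t=1$.
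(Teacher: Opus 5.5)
Your Steps 1 and 2 coincide with the first two legs of the paper's proof. The uniformity in $s$ that you left open is immediate. The mass outside $O=\bigcup_iB_i^{\mathrm{out}}$ stays at most $\eta M(u)$, while the rescaled cores carry at least $\kappa s^q(1-\eta)M(u)$. Hence the exterior fraction is at most $2\eta/(\kappa s^q)<\eta'$ for every $s\geq1$.

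The genuine gap is in Step 3, for three reasons.

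\textbf{The convexity runs the wrong way.} Since $\mathcal D_{s,t}w=(1-t)\mathcal C_sw+tP_s$ is affine in $t$, the map $t\mapsto\ln M(\mathcal D_{s,t}w)$ is convex. So $-\rho\ln M$ lies \emph{above} its chord, and the endpoint values of $\Fcal_b$ give no bound at interior times.

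\textbf{The modification is not confined to the balls.} Outside them $\mathcal D_{s,t}w=(1-t)w$, so the exterior mass changes and is controlled only by $E_{\mathrm{abs}}(w)$.

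\textbf{No $s_0$ independent of $u$ can work,} because $\mathscr A$ is unbounded in $E$. This is decisive. Take $u$ radial about the center of one inner ball, with the following structure:
\begin{itemize}
\item it equals $-K$ plus a concentrated logarithmic profile there;
\item it rises back to $0$ only between the chosen plateau sphere and the outer sphere;
\item $u=0$ outside $O$.
\end{itemize}
Suppose $(1+a)c_NW<\rho$, as in every application in Theorem~\ref{thm:reverse}. Then a sufficiently concentrated profile (logarithm of the concentration of order $K^N$ suffices) puts $u\in\mathscr A$, and radialization gives $w=u$. At $t=1/2$, however, the following hold for fixed $s_0$:
\begin{itemize}
\item the mass outside $O$ is $\int_{\Om\setminus O}H\dd x$;
\item the mass in $O$ tends to zero as $K\to\infty$, since the core term carries the factor $e^{-K/2}$;
\item the energy contains $(1+b)2^{-N}Q(w)$, which grows like $K^N$.
\end{itemize}
Both conditions defining $\mathscr C$ fail, even though both endpoints of your path lie in $\mathscr C$. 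The danger is at intermediate $t$, not near $t=1$.

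The missing idea is a mass lower bound that is uniform in $t$ but depends on $w$, compensated by a dilation scale chosen continuously in $u$. Use Jensen's inequality on one fixed plateau ball $\widehat B_i$, for the probability measure $H\dd x/\int_{\widehat B_i}H\dd y$, together with the bound $C\norm w_E$ for the corresponding average of $w$. This gives
\[
\int_OHe^{\mathcal D_{e^T,t}w}\dd x\geq c_0e^{qT-c_1\norm w_E}\quad(T\geq0,\ 0\leq t\leq1).
\]
Meanwhile the exterior mass is at most $E_{\mathrm{abs}}(w)$, and
\[
\Fcal_b(\mathcal D_{e^T,t}w)\leq(1+b)Q(w)+\rho c_1\norm w_E-\rho\ln c_0-\mathfrak bT,\quad \mathfrak b=q[\rho-(1+b)c_NW].
\]

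The paper therefore chooses $T(w)=T_0+\max\{T_{\mathrm{mass}}(w),T_{\mathrm{energy}}(w)\}$, where both scales are explicit continuous functions of $\norm w_E$, $Q(w)$, and $E_{\mathrm{abs}}(w)$. The homotopy then proceeds in legs:
\begin{enumerate}
\item Radialize as in your Step 1.
\item Dilate along $\mathcal C_{\exp(tT(w))}w$, which stays in $\mathscr C$ by your Step 2 bounds.
\item Remove the amplitude by $\mathcal D_{\exp(T(w)),t}w$.
\item Since $P_{e^{T(w)}}$ still depends on $u$, add the leg $P_{\exp((1-t)T(w)+tT_0)}$, with $T_0$ fixed so that $P_{e^T}\in\mathscr C$ for all $T\geq T_0$.
\end{enumerate}
Only the last leg makes the homotopy end at one fixed profile. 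Joint continuity follows from continuity of $T$ and of $E_{\mathrm{abs}}$, together with Lemmas~\ref{lem:radialize}--\ref{lem:dilate}.
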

\begin{proof}
Write $I=\bigcup_{i=1}^k B_i^{\mathrm{in}}$ and $O=\bigcup_{i=1}^k B_i^{\mathrm{out}}$. Simultaneous radialization gives
\[
\Fcal_b(\mathcal A_tu)\leq\Fcal_a(u)-\rho\ln (1-\eta)\leq-L.
\]
The exterior and inner functions remain unchanged. The exterior mass is at most $\eta M(u)$ and the inner mass at least $(1-\eta)M(u)$; their ratio to the total shows that the exterior fraction is at most $\eta<\eta'$. Let $w=\mathcal A_1u$ and choose one fixed sphere in each radial plateau, bounding a ball $\widehat B_i$ between the original inner and outer balls.

Apply Lemma~\ref{lem:dilate} on $\widehat B_i$. Set $\mathfrak b=q[\rho-(1+b)c_NW]>0$. The identities in \eqref{eq:dilation} give
\[
\Fcal_b(\mathcal C_sw)\leq\Fcal_a(u)+C-\mathfrak b\ln  s\leq-L-\mathfrak b\ln  s.
\]
Outside $O$ the mass remains at most $\eta M(u)$, whereas the rescaled inner balls contain at least $\kappa s^q(1-\eta)M(u)$. Thus the exterior fraction is at most $2\eta/(\kappa s^q)<\eta'$.

To remove the amplitude, use $\mathcal D_{e^T,t}w$. Weighted dilation in any one fixed ball followed by Jensen's inequality for the exponential function with respect to the normalized measure $H(x)\dd x$ on that ball gives geometric constants $c_0,c_1>0$ such that
\begin{equation}\label{eq:corelower}
\int_O He^{\mathcal D_{e^T,t}w}\dd x\geq c_0e^{qT-c_1\norm w_E}\quad(T\geq0,\ 0\leq t\leq1).
\end{equation}
Indeed the core mass is at least $\kappa e^{qT}\int_{\widehat B_i}He^{(1-t)w}\dd x$, and the average of $w$ for the probability measure $H(x)\dd x/\int_{\widehat B_i}H(y)\dd y$ has absolute value at most $C\norm w_E$, by H\"older's inequality and a finite-exponent Sobolev embedding. Outside $O$,
\begin{gather*}
\int_{\Om\setminus O}He^{\mathcal D_{e^T,t}w}\dd x\leq E_{\mathrm{abs}}(w),\\
\Fcal_b(\mathcal D_{e^T,t}w)\leq(1+b)Q(w)+\rho c_1\norm w_E-\rho\ln  c_0-\mathfrak bT.
\end{gather*}
Define continuous nonnegative scales
\begin{align*}
T_{\mathrm{mass}}(w)&=\frac{c_1\norm w_E+\ln ^+\!\Big(\frac{(1-\eta')E_{\mathrm{abs}}(w)}{\eta'c_0}\Big)}q,\\
T_{\mathrm{energy}}(w)&=\frac{[(1+b)Q(w)+\rho c_1\norm w_E-\rho\ln  c_0+L]^+}{\mathfrak b}.
\end{align*}
For $T$ above both scales, \eqref{eq:corelower} and the preceding energy bound place the amplitude-removal path in $\mathscr C$. Choose $T_0\geq0$ with
\[
\mathfrak bT_0\geq L-\rho\ln  c_0,\quad 
c_0e^{qT_0}\geq\frac{1-\eta'}{\eta'}M(0).
\]
The same estimates at $w=0$ show $P_{e^T}\in\mathscr C$ for $T\geq T_0$. Set
\[
T(w)=T_0+\max\{T_{\mathrm{mass}}(w),T_{\mathrm{energy}}(w)\}.
\]
Concatenate the four paths
\[
\mathcal A_tu,\quad  \mathcal C_{\exp(tT(w))}w,\quad 
\mathcal D_{\exp(T(w)),t}w,\quad 
P_{\exp((1-t)T(w)+tT_0)},\quad 0\leq t\leq1.
\]
Consecutive paths agree at their joining endpoints, every point satisfies the two defining constraints, and the terminal profile is $P_{e^{T_0}}$, independent of $u$. Lemmas~\ref{lem:radialize} and \ref{lem:dilate}, together with continuity of $E_{\mathrm{abs}}$, give joint continuity. On a compact parameter family convergence to the common terminal value is uniform, which gives continuity of the cone extensions used below.
\end{proof}

\subsection{Weighted barycenters}\label{subsec:barycenters}
Let
\[
\mathscr A_r=\{(n,I):n\in\N_0, I\subset\{1,\ldots,m\}, n+|I|>0, n+\beta_I<r\}.
\]
When this finite set is nonempty, set
\[
R_*:=\max_{(n,I)\in\mathscr A_r}(n+\beta_I)<r,\quad 
M_*:=\max_{(n,I)\in\mathscr A_r}(n+|I|),
\]
and choose $a_*>0$ with $(1+a_*)R_*<r$. For every $(n,I)\in\mathscr A_r$, $n<r$ and $|I|\leq m$, so $M_*\leq\lfloor r\rfloor+m<\infty$. An admissible measure has exactly $n+|I|$ support points for its ordinary support count $n$ and singular support set $I$, and hence at most $M_*$. The empty case will be treated separately. The core $K$ is taken sufficiently close to $\partial\Om$ when the inward map below is fixed. Conditions involving the separation of distinct points of $P$ are omitted if $m\leq1$.

Here is a direct construction of the polyhedral core used throughout. In a tubular boundary neighborhood let $d$ be signed distance, positive inside, and extend it to a smooth function on a containing box, positive and bounded away from zero on the remaining interior and negative on the remaining exterior. Interpolate it affinely on a sufficiently fine shape-regular Euclidean simplicial mesh of size $\ell$. Taylor's formula on each simplex gives uniform convergence of the interpolant $d_\ell$ to $d$ and uniform convergence of its piecewise gradients to $\nabla d$ in a smaller collar. In the normal coordinates $(y,s)\mapsto y-s\nu(y)$, the function $s\mapsto d_\ell(y-s\nu(y))$ is therefore strictly increasing, with slopes at least $1/2$, wherever it is close to a fixed small positive value $\varepsilon$. There exists a unique continuous graph $s=s_\ell(y)>0$ on which $d_\ell=\varepsilon$; existence follows from the opposite signs at the ends of a narrow interval around $\varepsilon$, and strict monotonicity gives uniqueness and continuity. The set $K=\{d_\ell\geq\varepsilon\}$ is a finite union of convex polytopes, hence a finite Euclidean polyhedron, contained in $\Om$. It contains all points outside this thin collar, including a neighborhood of $P$. On each normal segment, the map $s\mapsto\max\{s,s_\ell(y)\}$, interpolated with the identity, gives a strong deformation retraction of $\bOm$ onto $K$, fixed on $K$. The construction can make the collar arbitrarily thin. A finite triangulation of the polytopes and subdivisions inserting the finitely many $p_i$ give the base triangulations below. Thus no semialgebraicity of the smooth domain is being assumed.

\begin{lemma}\label{lem:polyhedron}
The space $\Br(K)$ is compact and homeomorphic to a finite polyhedron. It has a continuous retraction from an open neighborhood in $\Prob(\bOm)$. For each $\ell>0$ it admits a finite triangulation $\mathscr T$ such that for every closed simplex $\tau$,
\[
S_\tau:=\bigcup_{\sigma\in\tau}\supp\sigma
\]
is covered by $n$ closed simplices of a triangulation of $K$ of mesh at most $\ell$, together with $\{p_i:i\in I\}$, for some $(n,I)\in\mathscr A_r$. If $\eta$ is a face of $\tau$, $S_\eta\subset S_\tau$. The target mesh in $d_{\mathrm{BL}}$ can also be arbitrarily small, and its dimension is at most $M_*(N+1)-1$.
\end{lemma}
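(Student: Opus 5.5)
We will realize $\Br(K)$ as a finite union of images of the simplicial configuration spaces attached to the admissible types, and then triangulate. For each $(n,I)\in\mathscr A_r$, consider the compact polyhedron
\[
X_{n,I}=K^n\times\Delta^{n+|I|-1},
\]
where $K^n$ is the $n$-fold product of the Euclidean polyhedron $K$ and $\Delta^{n+|I|-1}$ is the standard simplex of barycentric coefficients. The evaluation map
\[
e_{n,I}(y_1,\ldots,y_n,t)=\sum_{j=1}^n t_j\delta_{y_j}+\sum_{i\in I}t_{n+i}\delta_{p_i}
\]
is continuous into $\Prob(\bOm)$, with image contained in $\Br(K)$. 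The image is contained in $\Br(K)$ because collisions, vanishing coefficients and ordinary points landing on $P$ never increase $\card_\beta$, since $\beta_i\leq1$. Conversely, every $\sigma\in\Br(K)$ lies in the image of some $e_{n,I}$ with $(n,I)\in\mathscr A_r$. Hence $\Br(K)$ is a finite union of compact images and is compact.

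To obtain a polyhedral structure, we triangulate $K$ with the points of $P$ as vertices, as in the construction preceding the lemma. We then form the product cell structures on the $X_{n,I}$. The identifications made by $e_{n,I}$ are generated by the following operations: permutations of equal types, deletion of a zero coefficient (a face map to a smaller type), merging of coincident ordinary points, and identification of an ordinary point equal to $p_i$ with the singular slot $i$ when $i\in I$, or with a new type $(n-1,I\cup\{i\})$, which is still admissible because $\beta_i\leq1$. All these relations are given by finitely many piecewise-linear maps between subpolyhedra, namely diagonals and coordinate faces, after suitable subdivision. Consequently $\Br(K)$ is the quotient of a finite polyhedron by a PL equivalence relation with closed graph. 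By standard PL topology, for example via iterated mapping cylinders and the quotient of a simplicial complex by a simplicial equivalence after subdivision, such a quotient is homeomorphic to a finite polyhedron. The dimension bound follows because each cell has dimension at most
\[
nN+(n+|I|-1)\leq M_*(N+1)-1.
\]

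\textbf{Main obstacle.} The main step is to make the quotient simplicial, so that the relation identifies whole simplices linearly. The first approach we would try is to subdivide each $X_{n,I}$ barycentrically, twice if necessary, relative to the diagonals $\{y_j=y_k\}$, the sets $\{y_j=p_i\}$ and the faces $\{t_j=0\}$. We would then check that the face-deletion maps send simplices onto simplices of the lower stratum.

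\textbf{Mesh conditions.} Once the complex is simplicial, we obtain the small-mesh property by taking the mesh $\ell$ of the triangulation of $K$ small. For a closed simplex $\tau$ of $\mathscr T$, lying in the image of a single cell $\prod_j\tau_j\times\Delta$, we have
\[
S_\tau\subset\bigcup_j\tau_j\cup\{p_i:i\in I\},
\]
and the number of factors $n$ satisfies $(n,I)\in\mathscr A_r$. The monotonicity $S_\eta\subset S_\tau$ for a face $\eta\subset\tau$ is immediate. The $d_{\mathrm{BL}}$-mesh is small because $e_{n,I}$ is uniformly continuous, and simplices of small diameter in $X_{n,I}$ arise from a fine mesh together with a fine subdivision of the coefficient simplex.

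\textbf{Neighborhood retraction.} We would not build the retraction by hand. Instead, $\Br(K)$ is a compact polyhedron, hence a compact ANR (a finite polyhedron is a Euclidean neighborhood retract). We embed $\Prob(\bOm)$ as a convex subset of a locally convex space, through its weak-$*$ topology as a subset of $C(\bOm)^*$; it is metrizable by $d_{\mathrm{BL}}$. A compact metric ANR admits a retraction from some open neighborhood in any metric space containing it as a closed subset, by the ANR property. This yields the continuous retraction from an open neighborhood in $\Prob(\bOm)$.
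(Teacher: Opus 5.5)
Your compactness argument and dimension count agree with the paper's. Your ANR argument for the neighborhood retraction is a legitimate substitute for the paper's explicit construction, which normalizes barycentric coordinates of a full subcomplex in a triangulated cube. However, that substitute works only once $\Br(K)$ is known to be a compact polyhedron, and this is exactly the step that is missing.

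You assert that the quotient of $\bigsqcup X_{n,I}$ by the relation induced by the maps $e_{n,I}$ is a finite polyhedron ``by standard PL topology''. You then list making the quotient simplicial as the main obstacle and offer only a tentative approach. This is not routine here, because the identifications are not gluings of simplices by isomorphisms:
\begin{itemize}
\item on $\{t_j=0\}$ the whole factor $y_j\in K$ is forgotten;
\item on the diagonal $\{y_j=y_k\}$ the coefficients collapse by $(t_j,t_k)\mapsto t_j+t_k$;
\item an ordinary point landing on $p_i$ is absorbed into a singular slot.
\end{itemize}
These non-injective collapses come on top of the $S_n$-action on ordinary points, and the diagonals are not subcomplexes of product cell structures. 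Making all these maps simultaneously simplicial forces subdivisions of lower strata, which in turn force further subdivisions of other strata. A real proof would need either an induction over strata (ordered, say, by $(n+|I|,n)$), with each step an adjunction along a PL map, or a cited quotient theorem for closed semialgebraic equivalence relations. Neither is supplied.

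The support assertion needs more than polyhedrality. It requires a triangulation in which every closed simplex lies in the image of a single product cell, and an abstract statement that the quotient is a polyhedron does not give this. So the finite-polyhedron claim, the support property, and hence also your retraction, are not established as written.

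The paper avoids quotients entirely. It uses the moment map $\mathbf m$, consisting of all polynomial moments of degree at most $2M_*$, and shows it is injective on $\Br(K)$:
\begin{itemize}
\item if $\mathbf m(\mu)=\mathbf m(\nu)$, then $\prod_{a\in\supp\mu}|x-a|^2$ integrates to zero against $\nu$, so $\supp\nu\subset\supp\mu$;
\item Lagrange-type polynomials of degree at most $2M_*-2$ then recover the coefficients.
\end{itemize}
By compactness, $\mathbf m$ is a homeomorphism onto $\mathbf m(\Br(K))$. This image is the finite union of the compact semialgebraic sets $\mathbf m(\mathscr C(n,I,Q))$, which are polynomial images of products of simplices.

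A semialgebraic triangulation compatible with these finitely many closed subsets then gives everything at once:
\begin{itemize}
\item the finite polyhedron;
\item each cover member as a subcomplex, so every closed simplex lies in one $\mathscr C(n,I,Q)$, which is the support property;
\item the dimension bound;
\item small $d_{\mathrm{BL}}$-mesh by subdivision.
\end{itemize}
If you want to keep your quotient picture, the moment embedding is the simplest way to realize that quotient concretely inside a Euclidean space.
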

\begin{proof}
Triangulate $K$ with mesh at most $\ell$ and each $p_i$ a vertex. For an admissible $(n,I)$ and ordered base simplices $Q_1,\ldots,Q_n$, form the compact image
\[
\mathscr C(n,I,Q)=\Big\{\sum_{j=1}^nt_j\delta_{x_j}+\sum_{i\in I}s_i\delta_{p_i}:
x_j\in Q_j, t_j,s_i\geq0, \sum_{j=1}^nt_j+\sum_{i\in I}s_i=1\Big\}.
\]
Ordinary slots cost at most one; fixed slots fixed at points of $P$ cost $\beta_i$. Identifications and zero coefficients cannot increase cost, so these images are contained in $\Br(K)$. They form a finite cover and prove compactness.

Let $\mathbf m$ be the vector of polynomial moments of degrees at most $2M_*$. It is injective on $\Br(K)$. For measures with the same moments, the nonnegative polynomial $\prod_{a\in\supp\mu}|x-a|^2$ has zero integral against both, so $\supp\nu\subset\supp\mu$. If the latter support is $\{a_1,\ldots,a_j\}$, the polynomials
\[
\ell_i(x)=\prod_{\substack{1\leq s\leq j\\s\ne i}}\frac{|x-a_s|^2}{|a_i-a_s|^2}
\]
identify its coefficients. Their degrees are at most $2M_*-2$. Compactness makes $\mathbf m$ a homeomorphism onto its image.

Each $\mathbf m(\mathscr C(n,I,Q))$ is a polynomial image of a product of finite simplices, hence is compact semialgebraic. We use simultaneous semialgebraic triangulation: a compact semialgebraic set and finitely many semialgebraic subsets admit a finite triangulation compatible with every subset \cite[Theorem 1.10, p.~9]{Coste}. All our specified subsets are closed, so compatibility makes them subcomplexes. Every open target simplex is contained in a member of the cover, and its closure stays there. This proves the support assertion. Face inclusion follows from the definition of $S_\tau$. Subdivision makes the mesh small in any compatible metric. The slot space has dimension
\[
nN+n+|I|-1\leq M_*(N+1)-1;
\]
semialgebraic images do not increase dimension (\cite[Proposition 1.8, p.~8]{Coste}), giving the uniform dimension bound.

Embed $\mathbf m(\Br(K))$ in the interior of a closed cube and triangulate the pair by the same theorem. After barycentric subdivision its subcomplex is full. On the open set where the sum of the barycentric coordinates at its vertices is positive, normalize just those coordinates. Their vertices span a simplex of the full subcomplex, so this defines a retraction. Conjugate by the triangulation homeomorphism and compose with the continuous moment map on $\Prob(\bOm)$. This gives
\[
\mathfrak r:\mathcal U\to \Br(K),\quad  \mathcal U\subset\Prob(\bOm)\text{ open}.
\]
If $\dist_{\mathrm{BL}}(\mu_j,\Br(K))\to0$, then $\mu_j\in\mathcal U$ for all sufficiently large $j$ and
\begin{equation}\label{eq:retractclose}
d_{\mathrm{BL}}(\mu_j,\mathfrak r(\mu_j))\to0.
\end{equation}
Choose nearby measures in the compact retract and extract a convergent subsequence; continuity of $\mathfrak r$ gives the same limit. The same compactness argument for every subsequence proves \eqref{eq:retractclose}.
\end{proof}

Choose a smooth inward flow $T_t$ supported in a boundary collar disjoint from $P$, so $T_t(\bOm)\Subset\Om$ for small $t>0$. Set
\[
(\mathcal T_tu)(x)=\left\{\begin{aligned}
&u(T_t^{-1}x),&&x\in T_t(\Om),\\&0,&&x\in\Om\setminus T_t(\Om).
\end{aligned}\right.
\]
Extension by zero is valid because the trace vanishes, and the resulting maps are jointly continuous in $E$: first verify it for smooth compactly supported functions and then use their density and the uniform bounds for composition by $T_t$. The positive function
\[
\mathfrak a_t(x)=\frac{H(T_tx)\det DT_t(x)}{H(x)}
\]
extends across $P$ with value one because $T_t$ is the identity there. In the collar $H$ is positive and $C^1$. Hence, uniformly for small $t$,
\[
e^{-Ct}\leq \mathfrak a_t\leq e^{Ct},\quad Q(\mathcal T_tu)\leq e^{Ct}Q(u),\quad M(\mathcal T_tu)\geq e^{-Ct}M(u).
\]
Choose $0<a_0<a_{-1}<a_*$ and fix $t_*>0$ so that $(1+a_0)e^{Ct_*}\leq1+a_{-1}$. Then
\begin{equation}\label{eq:pushmargin}
\Fcal_{a_0}(\mathcal T_tu)\leq\Fcal_{a_{-1}}(u)+C_{\mathrm{push}}\quad(0\leq t\leq t_*),\quad  C_{\mathrm{push}}=C\rho t_*.
\end{equation}
Choose $K$ close enough to the boundary that $T_{t_*}(\bOm)\subset\operatorname{int}K$, and fix the preceding retraction for this $K$. Write $T=T_{t_*}$ and $\mathcal T=\mathcal T_{t_*}$.

\begin{lemma}\label{lem:projection}
For all sufficiently large $L$,
\[
\Psi(u)=\mathfrak r(\mu_{\mathcal Tu}),\quad  u\in J_\rho^{-L},
\]
defines a continuous map to $\Br(K)$, with the same formula at every larger depth. Moreover
\begin{equation}\label{eq:projectionerror}
\varepsilon(D):=\sup_{J_\rho(u)\leq-D}d_{\mathrm{BL}}(\mu_{\mathcal Tu},\Psi(u))\to 0\quad(D\to\infty).
\end{equation}
For every sufficiently large $L$, there exists $\Phi:\Br(K)\to J_\rho^{-L}$ with $\Psi\Phi\simeq\Id$.
\end{lemma}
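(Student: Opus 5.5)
The plan has three parts. First, show that $\mu_{\mathcal Tu}$ is uniformly close to $\Br(K)$ on deep sublevels, so that the retraction $\mathfrak r$ of Lemma~\ref{lem:polyhedron} is defined there and $\Psi$ is continuous. Second, establish the error estimate \eqref{eq:projectionerror}. Third, build $\Phi$ from the test maps of Proposition~\ref{prop:test} and show $\Psi\Phi\simeq\Id$ by a straight-line homotopy in $\Br(K)$.

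\emph{Step 1: concentration of $\mu_{\mathcal Tu}$.} By Lemma~\ref{lem:concentration}, $\sup_{J_\rho(u)\leq-D}\dist_{\mathrm{BL}}(\mu_u,\Br(\bOm))\to0$ as $D\to\infty$. Two observations transfer this to $\mu_{\mathcal Tu}$.
\begin{itemize}
\item For any test function $f$ with $\max|f|+\Lip(f)\leq1$, the change of variables $x=T y$ gives $\int f\dd\mu_{\mathcal Tu}=\int (f\circ T)\,\mathfrak a_{t_*}\dd\mu_u\big/\int\mathfrak a_{t_*}\dd\mu_u$.
\item The density $\mathfrak a_{t_*}$ is continuous, positive and equal to one off the collar, and $f\circ T$ is Lipschitz with a fixed constant.
\end{itemize}
Hence, if $\mu_u$ is $d_{\mathrm{BL}}$-close to a finitely supported $\sigma\in\Br(\bOm)$, then $\mu_{\mathcal Tu}$ is close to the measure $\sigma'$ defined as follows. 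Each atom $t_j\delta_{y_j}$ of $\sigma$ is pushed to $\delta_{T y_j}$, with coefficient reweighted by $\mathfrak a_{t_*}(y_j)$ and then normalized.

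This estimate must be uniform in $\sigma$. I get uniformity by compactness of $\Br(\bOm)$ and continuity of the push-forward-and-reweight map on $\Prob(\bOm)$ in the weak topology. That map is continuous because $\mathfrak a_{t_*}$ is continuous and bounded below.

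The image $\sigma'$ lies in $\Br(K)$, for three reasons:
\begin{itemize}
\item $T(\bOm)\subset\operatorname{int}K$;
\item $T$ fixes a neighbourhood of $P$ and is injective;
\item reweighting keeps coefficients positive, so $\card_\beta(\sigma')=\card_\beta(\sigma)$.
\end{itemize}
Therefore $\sup_{J_\rho(u)\leq -D}\dist_{\mathrm{BL}}(\mu_{\mathcal Tu},\Br(K))\to0$. Choosing $L$ large puts $\mu_{\mathcal Tu}$ in the open set $\mathcal U$ where $\mathfrak r$ is defined, since $\mathcal U$ contains a uniform $d_{\mathrm{BL}}$-neighbourhood of the compact set $\Br(K)$.

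\emph{Step 2: continuity of $\Psi$ and the error estimate.} Continuity of $\Psi$ follows from three facts:
\begin{itemize}
\item $\mathcal T$ is continuous $E\to E$;
\item $u\mapsto\mu_u$ is continuous $E\to\Prob(\bOm)$, because $He^{u}\to He^{v}$ in $L^1$ by \eqref{eq:expbounded} and H\"older's inequality;
\item $\mathfrak r$ is continuous on $\mathcal U$.
\end{itemize}
Independence of the depth is automatic, since the formula does not involve $L$. For \eqref{eq:projectionerror} I argue by contradiction. Suppose $u_j$ satisfies $J_\rho(u_j)\leq -D_j\to-\infty$ while $d_{\mathrm{BL}}(\mu_{\mathcal Tu_j},\mathfrak r(\mu_{\mathcal Tu_j}))\geq\varepsilon_0$. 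Step 1 gives $\dist_{\mathrm{BL}}(\mu_{\mathcal Tu_j},\Br(K))\to0$, and then \eqref{eq:retractclose} gives the contradiction.

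\emph{Step 3: the map $\Phi$ and the homotopy $\Psi\Phi\simeq\Id$.} Set $\Phi=\Phi_\lambda$ from Proposition~\ref{prop:test}, with $\lambda$ large enough that $\sup J_\rho\circ\Phi_\lambda\leq-L$. It remains to see that $\mathcal T$ moves $\mu_{\Phi_\lambda(\sigma)}$ only slightly. The measure $\mu_{\Phi_\lambda(\sigma)}$ is uniformly close to $\sigma$, which is supported in $K$. I will take $K$ inside the region where $T$ is the identity, shrinking the collar of the inward flow if necessary. There is some tension here: Lemma~\ref{lem:polyhedron} requires $T(\bOm)\subset\operatorname{int}K$, while this step wants $T$ to be the identity on $K$. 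I would reconcile the two by first choosing the support of the flow inside $\bOm\setminus K$ but reaching into $\Om$ past $\partial K$ only slightly.

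If that fails, I use a fallback. Since $\sigma\in\Br(K)$ and the map $\sigma\mapsto\sigma'$ of Step 1 restricted to $\Br(K)$ is continuous into $\Br(K)$, I compose with the homotopy $T_{st_*}$, $s\in[0,1]$, which pushes along the flow. This shows that $\sigma\mapsto\sigma'$ is homotopic to the identity through $\Br(K)$. The justification is that $T_{st_*}$ maps $K$ into $K$ and preserves the weighted cost, so $K$ must be chosen invariant under the inward flow. In either case, by the uniform closeness above, $d_{\mathrm{BL}}(\Psi\Phi_\lambda(\sigma),\sigma')\to0$ uniformly as $\lambda\to\infty$.

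The final homotopy is $s\mapsto\mathfrak r\bigl((1-s)\Psi\Phi_\lambda(\sigma)+s\sigma'\bigr)$.
\begin{itemize}
\item The convex combination stays in the $d_{\mathrm{BL}}$-neighbourhood $\mathcal U$ for $\lambda$ large, since $d_{\mathrm{BL}}$ balls are convex.
\item At $s=0$ it equals $\Psi\Phi_\lambda(\sigma)$, because $\mathfrak r$ fixes $\Br(K)$; at $s=1$ it equals $\sigma'$.
\end{itemize}
This gives $\Psi\Phi_\lambda\simeq(\sigma\mapsto\sigma')\simeq\Id$. The main obstacle is the uniformity in Step 1: closeness to $\Br(\bOm)$ must give closeness after push-forward by $T$ uniformly, including near the boundary where atoms move. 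I will handle it through weak continuity of the reweighted push-forward on the compact set $\Prob(\bOm)$.
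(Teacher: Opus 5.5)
Your overall route is the paper's: the reweighted push-forward $\mathcal P_T(\mu)=T_\#(\mathfrak a_{t_*}\mu)/\int\mathfrak a_{t_*}\dd\mu$ with its uniform continuity on the compact space $\Prob(\bOm)$, Lemma~\ref{lem:concentration}, \eqref{eq:retractclose}, the test map $\Phi_\lambda$, and a straight-line interpolation followed by $\mathfrak r$. Two steps, however, do not go through as written.

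\textbf{The change of variables in Step~1 is not an identity.} The function $\mathcal Tu$ is extended by zero, so $He^{\mathcal Tu}=H$ on the collar $\Om\setminus T(\Om)$. Hence $\mu_{\mathcal Tu}$ carries an additional mass $\int_{\Om\setminus T(\Om)}H\dd x/M(\mathcal Tu)$, which your formula drops. The closeness of $\mu_{\mathcal Tu}$ to $\mathcal P_T(\mu_u)$ therefore holds only because $M(u)$ is large, and your argument never uses this. The missing estimate is the paper's. From $J_\rho(u)\leq-D$ and $Q\geq0$ you get $M(u)\geq e^{D/\rho}$. The extra unnormalized mass is at most $M(0)$, and $\int\mathfrak a_{t_*}He^u\dd x\geq e^{-Ct_*}M(u)$. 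Together these give $d_{\mathrm{BL}}(\mu_{\mathcal Tu},\mathcal P_T(\mu_u))\leq Ce^{-D/\rho}$. In Step~3 the same bound follows from $M(\Phi_\lambda(\sigma))=M(0)+\lambda^q$.

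\textbf{The homotopy $\mathcal P_T|_{\Br(K)}\simeq\Id$ is not established.} Your primary plan is impossible, not merely in tension. Suppose $T$ is injective, equals the identity on $K$, and maps $\bOm$ into $K$. Then for $y\in\bOm\setminus K$, the distinct points $y$ and $T(y)$ have the same image.

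Your fallback $s\mapsto\mathcal P_{T_{st_*}}(\sigma)$ stays in $\Br(K)$ only if $T_t(K)\subset K$ for $0\leq t\leq t_*$. Nothing in the given construction provides this: there the flow is an arbitrary smooth inward flow, and $K=\{d_\ell\geq\varepsilon\}$ is chosen afterwards close to $\partial\Om$. You impose the invariance without checking that it can be achieved.

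The paper's idea is to compose with the collar retraction $P_K$, using the homotopy $\sigma\mapsto(P_KT_t)_\#(\mathfrak a_t\sigma)/\int_K\mathfrak a_t\dd\sigma$, $0\leq t\leq t_*$. It is the identity at $t=0$ and equals $\mathcal P_T$ at $t=t_*$, because $T(\bOm)\subset K$. It stays in $\Br(K)$ for three reasons:
\begin{itemize}
\item $P_K$ fixes a neighbourhood of $P$;
\item $P_K$ sends no ordinary point into $P$;
\item collisions of ordinary points only lower $\card_\beta$.
\end{itemize}

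Alternatively, you could build invariance into the choice of flow. For instance, generate it by a nonnegative cutoff times $\nabla d$, and use that the piecewise gradients of $d_\ell$ are uniformly close to $\nabla d$ near $\partial K$. Then $d_\ell$ does not decrease along trajectories there. This must be arranged and verified, not assumed.
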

\begin{proof}
The probability-measure map
\[
\mathcal P_T(\mu)=\frac{T_\#(\mathfrak a_{t_*}\mu)}{\int_{\bOm} \mathfrak a_{t_*}(x)\dd\mu(x)}
\]
is continuous, hence uniformly continuous on compact $\Prob(\bOm)$. It sends $\Br(\bOm)$ into $\Br(K)$ because $T$ is injective and fixes $P$. By Lemma~\ref{lem:concentration}, $\mu_u$ approaches $\Br(\bOm)$ as $J_\rho(u)\to-\infty$. Also $J_\rho(u)\leq-D$ implies $M(u)\geq e^{D/\rho}$; the extra unnormalized mass of $\mathcal Tu$ outside $T(\Om)$ is at most $M(0)$. Consequently
\[
d_{\mathrm{BL}}(\mu_{\mathcal Tu},\mathcal P_T(\mu_u))\leq Ce^{-D/\rho}.
\]
Thus $\mu_{\mathcal Tu}$ approaches $\Br(K)$ uniformly, eventually belongs to $\mathcal U$, and \eqref{eq:retractclose} proves \eqref{eq:projectionerror}. Continuity follows from the weighted exponential integral and composition map.

Proposition~\ref{prop:test} gives $\Phi_\lambda$ into any prescribed negative sublevel for large $\lambda$, with $\mu_{\Phi_\lambda(\sigma)}\to\sigma$ uniformly. Therefore $\mu_{\mathcal T\Phi_\lambda(\sigma)}\to\mathcal P_T(\sigma)$ uniformly. For sufficiently large $\lambda$, straight interpolation of these measures stays in $\mathcal U$ by compactness of $\Br(K)$; applying $\mathfrak r$ gives $\Psi\Phi_\lambda\simeq\mathcal P_T|_{\Br(K)}$.

Let $P_K:\bOm\to K$ be the collar retraction. It fixes a neighborhood of $P$ and sends no ordinary point to a point of $P$. Then
\[
\sigma\mapsto\frac{(P_KT_t)_\#(\mathfrak a_t\sigma)}{\int_K \mathfrak a_t(x)\dd\sigma(x)},\quad 0\leq t\leq t_*,
\]
is a homotopy in $\Br(K)$ from the identity to $\mathcal P_T$: points of $P$ stay fixed, and ordinary collisions only lower cost. One sufficiently large $\lambda$ gives $\Phi=\Phi_\lambda$.
\end{proof}

\begin{lemma}\label{lem:carriers}
Fix $d_{\max}\in\N_0$ and finitely many annular preparation tolerances. For sufficiently small base mesh $\ell$ in Lemma~\ref{lem:polyhedron}, each $(\tau,d)$, $\tau\in\mathscr T$, $d\in\{0,\ldots,d_{\max}\}$, admits concentric inner and outer balls with disjoint outer closures in $\Om$. Each is centered at a single point of $P$ and contains no other point of $P$, or has center $c$ and outer radius $R$ with $\dist(c,P)>2R$. Their total cost is at most $R_*$. Their unions $I_{\tau,d}$ and $O_{\tau,d}$ satisfy
\begin{equation}\label{eq:carriernesting}
\dist(S_\tau,\Om\setminus I_{\tau,d})\geq \ell,\quad 
O_{\eta,e}\subset I_{\tau,d}\quad\text{if }\eta\leq\tau,\ e<d,
\end{equation}
including $\eta=\tau$. Their annular ratios are large enough for all prescribed applications of Lemma~\ref{lem:radialize}. All outer radii and distances of their points from $S_\tau$ are at most $C_*\ell$, with $C_*$ independent of $\ell$ and the number of target simplices. Their positive weight factors have one common comparison constant $\kappa\in(0,1]$, depending only on the fixed domain and weight, independently of the annular ratio, $d_{\max}$, and refinement.
\end{lemma}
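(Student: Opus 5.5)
We construct the carriers for each simplex $\tau$ separately from its support cover, with radii ordered by the level $d$. The scales are all fixed before $\ell$ is chosen. By Lemma~\ref{lem:polyhedron}, $S_\tau$ is covered by $n$ closed base simplices $Q_1,\ldots,Q_n$ of mesh at most $\ell$ together with $\{p_i:i\in I\}$, where $(n,I)\in\mathscr A_r$. Let $A$ be the annular ratio required by the finitely many prescribed applications of Lemma~\ref{lem:radialize}; it depends only on the tolerances, not on $\ell$. Choose a radius ladder
\[
\ell=r_0<R_0=Ar_0\le r_1<R_1=Ar_1\le\cdots<R_{d_{\max}}=A^{2d_{\max}+1}\ell,
\]
with $r_d=A^{2d}\ell$ and $R_d=A^{2d+1}\ell$. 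The total factor $C_*\sim A^{2d_{\max}+1}$, up to the clustering constant below, is independent of $\ell$ and of the number of target simplices. For level $d$ we want concentric inner and outer balls whose inner radii are at least about $2r_d$ plus a cluster diameter, and whose outer radii are about $R_d$ times a bounded factor.

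The first step is clustering. We place one ball around each $Q_j$ and each $p_i$ with $i\in I$, initially centered at a point of $Q_j$, respectively at $p_i$. If two such balls at scale $R_{d_{\max}}$ have non-disjoint outer closures, we merge them into one ball. If an ordinary ball has center within $2\times$(its outer radius) of some $p_i$, we recenter it at $p_i$; by the diameter bound it then still covers its simplex. Both operations are legitimate for the cost bound. A merge of ordinary balls lowers the count, and recentering an ordinary ball at $p_i$ changes its cost from $1$ to $\beta_i\le1$. This is exactly where $\alpha_i\le0$ is used. The resulting cost is at most that of some admissible $(n',I')$ dominated by $(n,I)$, hence at most $R_*$.

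We iterate the merging at most $M_*$ times. Each merge multiplies the radius by a bounded factor, so the final radii stay at most $C_*\ell$ with $C_*$ depending only on $A$, $d_{\max}$ and $M_*$. The final ball configuration for $\tau$ is computed once, at the largest scale. Level $d$ then uses the same centers with radii scaled down along the ladder. This gives concentric pairs with disjoint outer closures, each ball either centered at a single point of $P$ or satisfying $\dist(c,P)>2R$.

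For the nesting \eqref{eq:carriernesting}, the inner radius at level $d$ is at least $r_d\ge\ell$ plus the cluster diameter. This gives $\dist(S_\tau,\Om\setminus I_{\tau,d})\ge\ell$. For $\eta\le\tau$ we have $S_\eta\subset S_\tau$. The main obstacle is that a face $\eta$ may have been clustered differently from $\tau$. To handle this, we define the centers inductively by making the clustering of $\eta$ coarser-compatible with that of $\tau$. Concretely, we first fix for each $\tau$ the union of its outer balls. For every face we then choose the level-$e$ outer balls inside the level-$(e+1)\le d$ inner balls of $\tau$. This is possible because the ladder gap factor $A$ dominates the cluster-diameter factor; we enlarge $A$ by a fixed multiple, depending only on $M_*$, if needed.

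Finally, we take $\ell$ small enough that all outer balls lie in $\Om$; this uses $K\Subset\Om$ and $C_*\ell<\dist(K,\partial\Om)$. We also need $C_*\ell<\delta_P$, where $\delta_P$ is a third of the minimal separation of points of $P$. The comparison constant $\kappa$ is then uniform in $\ell$, $A$, $d_{\max}$ and refinement, by the remark after \eqref{eq:ballweight}. On an ordinary ball with double avoiding $P$, the factors $|x-p_i|^{N\alpha_i}$ vary by at most $3^{N|\alpha_i|}$, and $h$ varies by the ratio of its bounds. On a small ball at $p_i$, the remaining factors vary boundedly because the ball stays at distance at least $2\delta_P$ from the other points of $P$.
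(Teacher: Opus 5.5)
\textbf{The gap is in the ladder.} You cluster once at the top scale $R_{d_{\max}}=A^{2d_{\max}+1}\ell$ and then reuse the same centers at every level, with inner radius about $2r_d$ plus the cluster diameter $D$. Merging at the top scale can produce clusters with $D$ comparable to $R_{d_{\max}}$, not to $\ell$. So already at level $0$ the inner radius is of top-scale size.

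Because outer closures are disjoint, a level-$(d-1)$ outer ball can only sit inside the concentric level-$d$ inner ball. The case $\eta=\tau$ of \eqref{eq:carriernesting} therefore needs $A(2r_{d-1}+D)\le 2r_d+D$, that is $D\le 2Ar_{d-1}$, which fails at small $d$. Also, the actual top outer radius $A(2r_{d_{\max}}+D)$ exceeds the scale at which you checked disjointness, so disjoint outer closures are no longer guaranteed. Enlarging $A$ does not help: $D$ grows like $A^{2d_{\max}+1}$, while each rung of the ladder gains only $A^2$. The radii depend on the clustering and the clustering on the radii, and one top-scale clustering cannot break this circularity.

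\textbf{The face step is also underspecified.} A face $\eta$ lies in many simplices $\tau$, so its carriers must be chosen once for all cofaces. Choosing them ``inside the inner balls of $\tau$'' runs the dependency in the wrong direction.

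\textbf{The missing idea} is to order the construction by rank $\dim\tau+d$ and let the larger carrier absorb the smaller ones. When $(\tau,d)$ is built, all $(\eta,e)$ with $\eta\le\tau$, $e<d$ already exist. Inductively they lie within $C_{\mathrm{prev}}\ell$ of $S_\eta\subset S_\tau$, regardless of how $\eta$ was clustered. The construction at this rank is then:
\begin{itemize}
\item Take inner balls of radius $(C_{\mathrm{prev}}+3)\ell$ about seeds of the support slots. These cover $S_\tau$ and all the earlier outer balls with margin $\ell$. Enlarge them by $A$.
\item Recenter and merge sequentially, at this rank only. A recentered ball needs radius about $3R+\ell$ so that it still covers what the old one covered. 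Each merge encloses the union of the two outer closures in a new inner ball with margin $\ell$ and then enlarges by $A$. This preserves all coverage and lowers the ball count.
\item At most $M_*-1$ merges and $M_*$ recenterings give a bound $C_{\mathrm{next}}\ell$, with $C_{\mathrm{next}}$ depending only on $C_{\mathrm{prev}}$, $A$ and $M_*$.
\item Ranks are at most $M_*(N+1)-1+d_{\max}$, so $C_*$ is fixed before $\ell$ and is independent of the number of simplices.
\end{itemize}
With this ordering, your arguments for cost, for smallness of $\ell$ (separation of $P$ and $\dist(K,\partial\Om)$), and for the common $\kappa$ go through as you wrote them.
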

\begin{proof}
Choose an annular ratio $A>1$ sufficient for the prescribed tolerances. Induct on $\dim\tau+d\leq M_*(N+1)-1+d_{\max}$, with increasing geometric bounds. Suppose the outer balls already chosen for $\eta\leq\tau$ and $e<d$ lie within $C_{\mathrm{prev}}\ell$ of $S_\eta$, and hence of $S_\tau$. Cover $S_\tau$ by the admissible slots of Lemma~\ref{lem:polyhedron}, omit unused slots, and choose a seed in each nonempty intersection of a base simplex with $S_\tau$; use $p_i$ for a slot fixed at a point of $P$. Identify coincident seeds. A seed at $p_i$ is assigned weight $\beta_i$, and every other seed weight one, so the total weight does not exceed the original slot weight. Inner balls of radius $(C_{\mathrm{prev}}+3)\ell$ cover $S_\tau$ and all these previously constructed outer balls so that the covered sets have distance at least $\ell$ from the complement of the union of these balls. Enlarge them by $A$.

If an ordinary outer ball $B_R(c)$ satisfies $|c-p_i|\leq2R$, replace it by an inner ball centered at $p_i$ of radius $3R+\ell$ and enlarge by $A$. Its cost changes from $1$ to $\beta_i\leq1$. If two outer closures meet, enclose their union in a new inner ball whose boundary is at distance at least $\ell$ from that union, and enlarge by $A$. Retain the center in $P$ if one ball is centered in $P$; otherwise choose an enclosing center. The new radius is bounded by a fixed multiple of the sum of the former radii plus $\ell$. Two ordinary balls become one; if one ball is centered at a point of $P$, the new ball retains its center and weight, so total cost cannot increase.

At most $M_*-1$ operations combine two balls into one. Recentering creates no ordinary ball and occurs at most $M_*$ times. If $V$ is the sum of outer radii, each operation gives $V_{\mathrm{new}}\leq C_A(V+\ell)$. A new bound $C_{\mathrm{next}}\ell$ therefore depends only on the preceding bound, $A$, and $M_*$. Each resulting ball contains an original seed, giving the same type of bound for its distance from $S_\tau$. There are uniformly finitely many ranks. Thus $C_*$ is fixed before $\ell$ is chosen, regardless of the number of simplices or previously constructed carriers.

Choose $\ell$ so the resulting diameter and distance bounds are smaller than a fixed fraction of the minimum separation of distinct points of $P$ and of $\dist(K,\partial\Om)$. At every intermediate operation balls centered at distinct points of $P$ cannot meet. Balls centered at the same point of $P$ are enclosed in one ball with that center under the preceding rule, and no ordinary ball can require recentering at two distinct points of $P$. The operations are therefore defined, preserve all containments and distance bounds, and keep cost at most the original $n+\beta_I\leq R_*$. For an ordinary ball whose double avoids $P$, each distance to $p_i$ varies by a factor at most three. For a ball centered at a point of $P$, the remaining factors stay uniformly bounded above and below. Together with the fixed positive bounds for $h$, these observations provide the common $\kappa$, with the asserted independence.
\end{proof}

\begin{theorem}\label{thm:reverse}
For every sufficiently large $L$ and nonresonant $\rho>0$, $J_\rho^{-L}\simeq\Br(K)$. If $\mathscr A_r=\varnothing$, all sufficiently negative sublevels are empty.
\end{theorem}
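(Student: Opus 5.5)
The plan is to show that the map $\Psi:J_\rho^{-L}\to\Br(K)$ of Lemma~\ref{lem:projection} is a weak homotopy equivalence, and then upgrade this to a homotopy equivalence. Surjectivity on homotopy groups is already essentially in hand: Lemma~\ref{lem:projection} gives $\Phi$ with $\Psi\Phi\simeq\Id$. For the empty case $\mathscr A_r=\varnothing$, we have $r<\min\{1,\beta_i\}$, so $\Br(\bOm)=\varnothing$. Lemma~\ref{lem:concentration} then makes $J_\rho$ bounded below, and sufficiently negative sublevels are empty.

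For injectivity, I would prove a relative extension statement. Let $f:(D^k,\partial D^k)\to(\Br(K),\cdot)$ be given together with $g:\partial D^k\to J_\rho^{-L}$ such that $\Psi g$ is close to $f|_{\partial D^k}$. Then there should be an extension $G:D^k\to J_\rho^{-L'}$, with $L'$ fixed in advance, whose projection $\Psi G$ is homotopic to $f$ rel boundary. Equivalently, I would show that $\Phi\Psi\simeq$ inclusion of $J_\rho^{-L}$ into $J_\rho^{-L'}$.

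The construction triangulates the domain of $u\mapsto\Psi(u)$ and works simplex by simplex. Using simplicial approximation, I would make the relevant map simplicial into the triangulation $\mathscr T$ of Lemma~\ref{lem:polyhedron}, with mesh $\ell$ chosen after all energy and mass parameters $a_0<a_{-1}<a_*$, $\eta,\eta'$, and $C$ from Lemma~\ref{lem:contract} are fixed. For a point $u$ lying over a simplex $\tau$, the estimate \eqref{eq:projectionerror} together with $\dist(S_\tau,\Om\setminus I_{\tau,d})\geq\ell$ shows that $\mu_{\mathcal Tu}$ has at least $(1-\eta)$ of its mass in the inner carrier $I_{\tau,d}$. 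Using \eqref{eq:pushmargin} and Proposition~\ref{prop:margin}, we may place $\mathcal Tu$ in a sublevel of $\Fcal_{a}$ with the margin required by Lemma~\ref{lem:contract}; the additive losses are absorbed by Proposition~\ref{prop:strip}.

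Next comes the induction on $\dim\tau+d$, with $d$ indexing the depth, $d\leq d_{\max}=M_*(N+1)$. On each cell I apply the contraction of Lemma~\ref{lem:contract} within the carrier pair $(I_{\tau,d},O_{\tau,d})$. The nesting \eqref{eq:carriernesting}, namely $O_{\eta,e}\subset I_{\tau,d}$, ensures that homotopies already defined on faces at smaller depth still lie in the set $\mathscr A$ for the larger carrier. They can therefore be coned off to the fixed terminal profile of $\tau$, giving a cone extension over the simplex. Its projection stays in the open star of $\tau$, because the measures stay $\eta'$-concentrated in $O_{\tau,d}$, which lies within $C_*\ell$ of $S_\tau$; this makes the projection homotopic to $f$ rel boundary via $\mathfrak r$. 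Finally, the fixed terminal profiles are connected to $\Phi_\lambda(\sigma)$ through the contractible set of Lemma~\ref{lem:contract}. This yields $\Phi\Psi\simeq\iota$ on compact families, hence injectivity on all $\pi_k$.

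For the upgrade, $J_\rho^{-L}$ is homotopy equivalent to a CW complex: it is an open subset-retract situation of a Banach manifold via Proposition~\ref{prop:strip}, since sublevels at regular strips deform onto each other and are equivalent to open sublevels, which are metric ANRs. $\Br(K)$ is a finite polyhedron, so Whitehead's theorem applies. The Euler identity then follows from Theorem~\ref{thm:degreeEuler}.

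The hardest step is the bookkeeping in the simplex-by-simplex extension. The energy losses $C$, the mass thresholds $\eta<\kappa\eta'/4$, and the margin $a$ must all be fixed before refining $\ell$. The carriers must also stay cost-bounded by $R_*$ even when ordinary centers approach $P$, which is where $\beta_i\leq1$ from Lemma~\ref{lem:carriers} is used. The step I would check first is that iterating depth by depth does not compound the additive loss; this relies on using one level of $\Fcal_a$ for all depths, which Proposition~\ref{prop:margin} permits.
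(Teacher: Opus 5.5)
Your overall architecture is the paper's: surjectivity from $\Psi\Phi\simeq\Id$, injectivity from a relative filling built by simplicial approximation, the carriers of Lemma~\ref{lem:carriers} and cone-offs by Lemma~\ref{lem:contract}, then ANR, Milnor and Whitehead. However, the step you single out is resolved incorrectly, and as written the induction does not close.

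\textbf{Lemma~\ref{lem:contract} cannot be iterated at one level.} Each application loses on three fronts:
\begin{itemize}
\item it requires $b<a$ strictly, because radialization costs $\varepsilon Q$, which is paid for by $(1+b)(1+\varepsilon)\leq1+a$;
\item it takes input with $\Fcal_a\leq-L-C$ and returns output with only $\Fcal_b\leq-L$;
\item it worsens the mass tolerance from $\eta$ to some $\eta'>4\eta/\kappa$.
\end{itemize}
Proposition~\ref{prop:margin} cannot restore these between stages. Its descent flow would change the values already fixed on faces, which the cone must match exactly. It also gives no control over where the mass sits, so the carrier concentration needed for the next cone is lost. It does not address the mass tolerance at all. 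In the paper it is used only once, before the construction, to push $f$ to depth $D$.

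\textbf{The missing idea is that the losses may compound, because there are only finitely many stages.} Assign to a domain simplex $\sigma$ the carrier $(\tau(\sigma),\dim\sigma)$, so $d$ runs over $0,\ldots,n+1$ for the disk $D^{n+1}$ being filled. Your $d_{\max}=M_*(N+1)$ bounds the target dimension, not the number of stages. It is the domain dimension that guarantees $e<d$ for a face $\xi$ even when $\tau(\xi)=\tau(\sigma)$, which is what \eqref{eq:carriernesting} needs. The choices are then made in this order:
\begin{itemize}
\item Depending on $n$, and before $\ell$ and the target triangulation, fix a staircase $a_0>a_1>\cdots>a_{n+1}\geq0$.
\item At the same stage fix tolerances $\eta_0<\cdots<\eta_{n+1}$, with each $\eta_{d-1}$ admissible relative to $\eta_d$ in Lemma~\ref{lem:contract}; only the common $\kappa$ enters.
\item Next choose depths backwards with $L_{d-1}\geq L_d+C_d+1$, as in \eqref{eq:depths}.
\item Only then choose the initial depth so that $\varepsilon(D)<\delta$.
\end{itemize}
The hypothesis $\Fcal_{a_d}\leq-L_d$, $\mu(O_{\tau(\sigma),d})\geq1-\eta_d$ on $d$-simplices then propagates, since $\Fcal_{a_{d-1}}\leq\Fcal_{a_e}$ and $\eta_e\leq\eta_{d-1}$ for $e<d$. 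Vertices have no boundary to cone off; they need a fixed concentrated profile in their carrier.

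\textbf{The projection requirement is neither justified nor needed.} You ask that $\Psi G\simeq f$, via the claim that projections stay in the open star of $\tau$. Concentration in $O_{\tau,d}$, a union of balls of radius up to $C_*\ell$, says nothing about how mass is split among and within those balls at the scale of the target $d_{\mathrm{BL}}$-mesh $\delta=\eta_0\ell/16$. So the projection can leave that star. Injectivity only requires the following: whenever $\Psi f$ extends over $D^{n+1}$, the map $f$ extends inside the fixed sublevel, after being deformed deeper and pushed by $\mathcal T$ to $w$. The projection of the filling plays no role.

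\textbf{The claimed equivalence with $\Phi\Psi\simeq\iota$ does not hold.} That statement is not equivalent to the relative one, and it cannot be produced on the noncompact sublevel by triangulating. Moreover, Lemma~\ref{lem:contract} does not assert that $\mathscr C$ is contractible, only that $\mathscr A\hookrightarrow\mathscr C$ is null-homotopic. So ``connecting terminal profiles to $\Phi_\lambda(\sigma)$'' is not available as stated. The CW upgrade you describe, through open sublevels and Proposition~\ref{prop:strip}, is fine.
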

\begin{proof}
The empty case follows from Lemma~\ref{lem:concentration}. Otherwise choose $L_*>0$ so $\Psi$ is defined on $U_*=J_\rho^{-L_*}$ and all lower levels are regular. Keep the inward maps, retraction, core, projection, and parameters $a_{-1}>a_0>0$ fixed. Lemma~\ref{lem:projection} gives $\Psi\Phi\simeq\Id$. Since $r/(1+a_{-1})>R_*$ and every element of $\Sigma/c_N$ below $r$ is at most $R_*$, $\rho/(1+a_{-1})$ and $\rho$ belong to the same component; Proposition~\ref{prop:margin} applies.

We prove: if $f:S^n\to U_*$, $n\geq0$, and $\Psi f$ extends to $D^{n+1}$, then $f$ also extends there. Finite choices below may depend on $n$, but the fixed projection and its parameters do not. Choose
\[
a_0>a_1>\cdots>a_{n+1}\geq0.
\]
Choose $0<\eta_{n+1}<1/8$ and, for $d=n,\ldots,0$, $0<\eta_d<\eta_{d+1}/2$ small enough for Lemma~\ref{lem:contract} with mass tolerances $\eta_d$ and $\eta_{d+1}$. The choice uses the common $\kappa$, before choosing ball radii. Choose tolerances $\epsilon_d>0$ with $(1+a_d)(1+\epsilon_d)\leq1+a_{d-1}$, $1\leq d\leq n+1$. These fix the annular ratio and $C_*$ in Lemma~\ref{lem:carriers}.

Choose a sufficiently small base mesh $\ell<1$, triangulate the target by Lemma~\ref{lem:polyhedron}, and subdivide until its $d_{\mathrm{BL}}$ mesh is less than
\[
\delta=\eta_0\ell/16.
\]
The support covers of Lemma~\ref{lem:polyhedron} persist under subdivision. Construct the carriers for $0\leq d\leq n+1$. This geometry and target triangulation now stay fixed. For each $(\tau,d)$ with $d\geq1$, Lemma~\ref{lem:contract} with parameters $a_{d-1}>a_d$ and errors $\eta_{d-1},\eta_d$ has an additive loss $C_{\tau,d}$. Set $C_d=\max_{\tau\in\mathscr T}C_{\tau,d}$ and choose depths backwards from $L_{n+1}\geq L_*$ so that
\begin{equation}\label{eq:depths}
L_{d-1}\geq L_d+C_d+1\quad(d=n+1,\ldots,1).
\end{equation}
Increase them all by one common constant until $D=L_0+C_{\mathrm{push}}$ is in the range of Proposition~\ref{prop:margin} and $\varepsilon(D)<\delta$. All these choices precede subdivision of the parameter domain.

Propositions~\ref{prop:strip} and \ref{prop:margin} deform $f$ in $U_*$ to $\widehat f$ with $\Fcal_{a_{-1}}(\widehat f)\leq-D$. The fixed $\Psi$ is defined along this homotopy, so $\Psi\widehat f$ still extends to the ball. Set $w=\mathcal T\widehat f$. Equation \eqref{eq:pushmargin} gives a path in $U_*$ from $\widehat f$ to $w$, and
\[
\Fcal_{a_0}(w)\leq-L_0,\quad 
d_{\mathrm{BL}}(\mu_w,\Psi(\widehat f))<\delta.
\]
Choose $g:D^{n+1}\to\Br(K)$ extending $\Psi\widehat f$. Its boundary value is $\mathfrak r(\mu_w)$, rather than $\Psi(w)$.

Triangulate the pair $(D^{n+1},S^n)$ by a finite complex $\mathscr D$ with boundary subcomplex. Subdivide only this domain until $g$ has a simplicial approximation $g':\mathscr D\to\mathscr T$. By the open-star construction (\cite[Theorem 2C.1 and its open-star proof]{Hatcher}), $g(x)$ and $g'(x)$ belong to a common closed target simplex; hence
\begin{equation}\label{eq:simplicialclose}
d_{\mathrm{BL}}(g(x),g'(x))<\delta.
\end{equation}
We do not require equality of $g,g'$ on the boundary: the $E$-valued boundary map $w$ will remain unchanged. Let $\tau(\sigma)$ be the simplicial image of a domain simplex $\sigma$. Faces satisfy $\tau(\xi)\leq\tau(\sigma)$, possibly with equality. Assign to $\sigma$ the carrier $(\tau(\sigma),\dim\sigma)$.

For a target simplex $\tau$, set $\zeta_\tau(y)=\max\{1-\dist(y,S_\tau)/\ell,0\}$. It is one on $S_\tau$, zero outside $I_{\tau,d}$, and $\norm{\zeta_\tau}_\infty+\Lip(\zeta_\tau)\leq2/\ell$. On a closed boundary simplex $\sigma$, with $d=\dim\sigma$, $\tau=\tau(\sigma)$, the boundary identity for $g$ and \eqref{eq:simplicialclose} give
\begin{align}
    \mu_{w(x)}(I_{\tau,d})&\geq\int_\Om\zeta_\tau(y)\dd\mu_{w(x)}(y)\nonumber\\
&\geq\int_K\zeta_\tau(y)\dd[g'(x)](y)-\frac2\ell\bigl[d_{\mathrm{BL}}(\mu_{w(x)},g(x))+d_{\mathrm{BL}}(g(x),g'(x))\bigr]\nonumber\\
&>1-4\delta/\ell\geq1-\eta_0.\label{eq:boundarymass}
\end{align}
Estimate \eqref{eq:boundarymass} holds on every face of $\sigma$ as well.

Extend $w$ over the skeleton, requiring on each $d$-simplex $\sigma$ that
\begin{equation}\label{eq:skeletonbounds}
\Fcal_{a_d}(W(x))\leq-L_d,\quad 
\mu_{W(x)}(O_{\tau(\sigma),d})\geq1-\eta_d.
\end{equation}
The entire prescribed boundary already satisfies \eqref{eq:skeletonbounds} because $a_d\leq a_0$, $L_d\leq L_0$, and $\eta_d\geq\eta_0$. At an interior vertex use one fixed sufficiently concentrated logarithmic profile in its carrier. If its total cost is $W_{\tau,0}\leq R_*$, its energy has coefficient $q[(1+a_0)c_NW_{\tau,0}-\rho]<0$, and its exterior mass fraction tends to zero. Thus it satisfies \eqref{eq:skeletonbounds}. There are finitely many vertex carriers.

Suppose $W$ is defined on the boundary and the $(d-1)$-skeleton. For an interior $d$-simplex $\sigma$, set $\tau=\tau(\sigma)$. At $z\in\partial\sigma$, choose a proper face $\xi$ containing $z$, of dimension $e<d$. The induction, ordered parameters, and \eqref{eq:depths} give
\[
\Fcal_{a_{d-1}}(W(z))\leq\Fcal_{a_e}(W(z))\leq-L_e\leq-L_{d-1}\leq-L_d-C_{\tau,d}.
\]
Equation \eqref{eq:carriernesting}, including the case $\tau(\xi)=\tau$, gives
\[
\mu_{W(z)}(I_{\tau,d})\geq\mu_{W(z)}(O_{\tau(\xi),e})\geq1-\eta_e\geq1-\eta_{d-1}.
\]
Thus the entire boundary image belongs to the domain of the local contraction $H_{\tau,d}$, which ends at a fixed profile $P_{\tau,d}$ and satisfies the bounds in \eqref{eq:skeletonbounds} for $\sigma$. Let $b_\sigma$ be the geometric barycenter. For the unique expression $(1-t)z+tb_\sigma$ with $z\in\partial\sigma$, $0\leq t<1$, define
\[
W((1-t)z+tb_\sigma)=H_{\tau,d}(W(z),t),\quad  W(b_\sigma)=P_{\tau,d}.
\]
This agrees exactly on the boundary. Compactness of $W(\partial\sigma)$ and joint continuity give uniform convergence to $P_{\tau,d}$ as $t\to1$, proving continuity at the center. Common faces retain their previous definitions. Iterating to dimension $n+1$ gives $W:D^{n+1}\to U_*$ because $J_\rho\leq\Fcal_{a_d}\leq-L_d\leq-L_*$. Attach the earlier homotopy from $f$ to $w$ to this filling to fill $f$.

For $n=0$, the relative extension proves injectivity on path components; $\Psi\Phi\simeq\Id$ gives surjectivity. For $n\geq1$ it proves injectivity on based homotopy groups. Movement of the basepoint causes no difficulty: change of basepoint preserves the zero class, and for loops it is conjugation. The one-sided homotopy gives surjectivity after transporting the basepoint. Bijectivity on components allows transport to any prescribed basepoint. Hence $\Psi$ is a weak homotopy equivalence.

Finally, choose $\varepsilon>0$ so that $J_\rho$ has no critical point with value in $[-L_*,-L_*+\varepsilon]$. Proposition~\ref{prop:strip}, restricted to the open set $V=\{J_\rho<-L_*+\varepsilon\}$, makes $U_*$ a strong deformation retract of $V$. The Banach space $E$ is an absolute retract for metric spaces by \cite[Theorem 4.1 and Corollary 4.2, pp.~357--358]{Dugundji}. Thus the closed neighborhood retract $U_*\subset E$ is an absolute neighborhood retract (ANR); see \cite[Section 2, p.~391]{Hanner}. Since $U_*$ is separable and metrizable, \cite[Theorem 1, p.~272]{MilnorCW} gives CW homotopy type. The target is a finite polyhedron by Lemma~\ref{lem:polyhedron}. Whitehead's theorem \cite[Theorem 4.5]{Hatcher}, applied to CW models on each component, now makes $\Psi$ a homotopy equivalence. The argument starts at any sufficiently large $L_*$, proving the assertion at every such level.
\end{proof}

\section{Proofs of main results}\label{sec:formula}
Theorems~\ref{thm:degreeEuler} and \ref{thm:reverse} reduce the main results to the Euler characteristic of $\Br(K)$. We compute it using $\chi_c$ on the configuration strata of the compact core $K$ and verify the cancellation of zero exponents. We then complete the proofs of Theorems~\ref{thm:degree} and \ref{thm:homotopy} and Corollaries~\ref{cor:existence}, \ref{cor:onemark}, and \ref{cor:equalmarks}.

\begin{proposition}\label{prop:Euler}
If $c=\chi(K)$, $r>0$, and $c_Nr\notin\Sigma$, then
\begin{equation}\label{eq:baryEuler}
1-\chi(\Br(K))=\sum_{\substack{I\subset\{1,\ldots,m\}\\\beta_I<r}}(-1)^{|I|}
\binom{\lfloor r-\beta_I\rfloor-c+m}{\lfloor r-\beta_I\rfloor}.
\end{equation}
\end{proposition}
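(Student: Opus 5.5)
The plan is to compute $\chi(\Br(K))$ by cutting $\Br(K)$ into configuration strata and using additivity of the compactly supported Euler characteristic. By Lemma~\ref{lem:polyhedron}, $\Br(K)$ is a compact finite polyhedron, so $\chi(\Br(K))=\chi_c(\Br(K))$. For each admissible pair $(n,I)\in\mathscr A_r$ let $\mathscr S_{n,I}$ be the set of measures with exactly $n$ ordinary support points in $X:=K\setminus P$, with singular support exactly $\{p_i:i\in I\}$, and with all coefficients positive. Every point of $\Br(K)$ lies in exactly one stratum; admissibility is precisely $n+\beta_I<r$, since $\card_\beta(\sigma)=n+\beta_I$ on $\mathscr S_{n,I}$.

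Under the moment embedding $\mathbf m$ of Lemma~\ref{lem:polyhedron}, each stratum is a locally closed semialgebraic subset. Indeed, its closure consists of the images $\mathscr C(n,I,\cdot)$, and the stratum is obtained from the closure by removing the lower strata, namely collisions, vanishing coefficients, and ordinary points reaching $P$. Additivity of $\chi_c$ over a finite partition into locally closed semialgebraic pieces then gives
\[
\chi(\Br(K))=\sum_{(n,I)\in\mathscr A_r}\chi_c(\mathscr S_{n,I}).
\]
Next, $\mathscr S_{n,I}$ fibers over the unordered configuration space $B_n(X)=F_n(X)/\mathfrak S_n$: it is the quotient $\bigl(F_n(X)\times\Delta^\circ_{n+|I|-1}\bigr)/\mathfrak S_n$, where $\mathfrak S_n$ permutes the ordinary coordinates and the corresponding simplex coordinates simultaneously. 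This is a locally trivial bundle with fibre the open simplex $\Delta^\circ_{n+|I|-1}$, and $\chi_c(\Delta^\circ_d)=(-1)^d$. Multiplicativity of $\chi_c$ for such bundles (with compactly supported cohomology, or by triangulating compatibly) gives
\[
\chi_c(\mathscr S_{n,I})=(-1)^{n+|I|-1}\chi_c(B_n(X)).
\]

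For the configuration spaces I would use the standard identity $\sum_{n\geq0}\chi_c(B_n(X))t^n=(1+t)^{\chi_c(X)}$. This follows by additivity, stratifying $X^n/\mathfrak S_n$ by collision pattern, or by induction on cells of a triangulation of $K$ in which the $p_i$ are vertices. Here $\chi_c(X)=\chi(K)-m=c-m$, so $\chi_c(B_n(X))=\binom{c-m}{n}$ and $(-1)^n\binom{c-m}{n}=\binom{n-c+m-1}{n}$. Hence
\[
1-\chi(\Br(K))=1+\sum_{(n,I)\in\mathscr A_r}(-1)^{|I|}\binom{n-c+m-1}{n}.
\]
The missing term $(n,I)=(0,\varnothing)$ contributes exactly $1$, so $1-\chi(\Br(K))=\sum_I(-1)^{|I|}\sum_{n}\binom{n+m-c-1}{n}$, where $n$ ranges over $n+\beta_I<r$. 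Nonresonance says $r-\beta_I\notin\N_0$ when $\beta_I<r$, so this range is exactly $0\leq n\leq k_I=\lfloor r-\beta_I\rfloor$. The hockey-stick identity $\sum_{n=0}^{k}\binom{n+z-1}{n}=\binom{k+z}{k}$ for integer $z$ (a polynomial identity) then yields \eqref{eq:baryEuler}.

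The main obstacle is the topological bookkeeping rather than the combinatorics. One must check that the strata are locally closed and semialgebraic in the moment coordinates, so that additivity of $\chi_c$ is legitimate. One must also confirm that the quotient by $\mathfrak S_n$ really is a bundle with open-simplex fibre, since $\mathfrak S_n$ acts freely on $F_n(X)$. Finally, one needs the configuration-space generating function for the noncompact polyhedron $X=K\setminus P$; here I would first try induction on open cells, using $\chi_c(B_n(X\sqcup e))=\sum_j\chi_c(B_j(X))\chi_c(B_{n-j}(e))$ and $\chi_c(B_j(\R^d))=\binom{(-1)^d}{j}$.
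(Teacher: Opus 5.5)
Your proposal is correct and follows the paper's proof essentially step for step. Both use the same $(n,I)$ configuration strata, the identity $\chi=\chi_c$ on the compact polyhedron, additivity with the open-simplex factor $(-1)^{n+|I|-1}$, the count $\chi_c=\binom{c-m}{n}$ for unordered configurations in $X=K\setminus P$, and the same alternating partial-sum binomial identity (your hockey-stick form is equivalent). The differences are cosmetic: the paper works on the ordered covering $\operatorname{Conf}_n(X)\times\operatorname{int}\Delta^{n+|I|-1}$ and divides by the covering degree, and it derives the falling factorial by inclusion--exclusion on diagonals instead of quoting the generating function $(1+t)^{\chi_c(X)}$.
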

\begin{proof}
Set $X=K\setminus P$, with $\chi_c(X)=c-m$. A stratum is specified by $\ell$ ordinary support points and exactly the points of $P$ in $I$, all with positive coefficients, where $\ell+\beta_I<r$ and $\ell+|I|>0$. Writing $\operatorname{Conf}_\ell(X)=\{(x_1,\ldots,x_\ell)\in X^\ell:x_i\ne x_j\text{ for }i\ne j\}$, with $\operatorname{Conf}_0(X)$ a point, and denoting the standard $j$-simplex by $\Delta^j$, its ordered covering is
\[
\operatorname{Conf}_\ell(X)\times\operatorname{int}\Delta^{\ell+|I|-1},
\]
with the symmetric group on the $\ell$ ordinary points acting freely. All these spaces and actions are semialgebraic under the map $\mathbf m$. The additivity and multiplicativity of $\chi_c$ used here are the finite-cell properties of compactly supported Euler characteristic; equivalently see \cite[Lemma 1.21 and Theorem 1.22]{Coste}. For $z=\chi_c(X)$, inclusion--exclusion on the diagonals of $X^\ell$ yields
\[
\chi_c(\operatorname{Conf}_\ell(X))=z(z-1)\cdots(z-\ell+1).
\]
Indeed the diagonal for a partition into $j$ blocks is $X^j$, and the partition-lattice sum is the falling factorial. This uses no configuration-space fibration for a core with boundary. Triangulating a finite covering compatibly and counting the lifts of its open cells shows that its degree multiplies $\chi_c$. Also $\chi_c(\operatorname{int}\Delta^j)=(-1)^j$. Thus our stratum contributes
\[
(-1)^{\ell+|I|-1}\binom{c-m}{\ell}.
\]
No orientability is needed. Since $\Br(K)$ is compact, its Euler characteristic equals $\chi_c$. Additivity, with the empty stratum represented by the term one, gives
\[
1-\chi(\Br(K))=\sum_{\substack{I\subset\{1,\ldots,m\},\ \ell\geq0\\\ell+\beta_I<r}}
(-1)^{\ell+|I|}\binom{c-m}{\ell}.
\]
For $\beta_I<r$, nonresonance makes $r-\beta_I$ nonintegral, so the largest $\ell$ is $k_I=\lfloor r-\beta_I\rfloor$. The identity
\[
\sum_{\ell=0}^k(-1)^\ell\binom z\ell=(-1)^k\binom{z-1}k=\binom{k-z}k
\]
proves \eqref{eq:baryEuler}, also when the barycenter space is empty.
\end{proof}

Formula \eqref{eq:baryEuler} agrees with \cite[Theorem 1.1 and Corollary 1.2]{Kallel}. Its relation to the generating series is given by \cite[Corollary 1.8]{Kallel}. With $c=\chi(K)$ and locally finite real exponents, write
\begin{equation}\label{eq:series}
G(t)=(1-t)^{c-m}\prod_{i=1}^m(1-t^{\beta_i})=\sum_{\lambda\geq0}a_\lambda t^\lambda.
\end{equation}
For $r>0$ with $c_Nr\notin\Sigma$, the degree is $\sum_{0\leq\lambda<r}a_\lambda$, with $a_0=1$, strict cutoff, and repeated exponents combined. Let $0<r_-<r_0<r_+$ satisfy $c_Nr_\pm\notin\Sigma$, and suppose that $c_Nr_0$ is the only element of $\Sigma$ in $(c_Nr_-,c_Nr_+)$. Then $d_{c_Nr_+}-d_{c_Nr_-}=a_{r_0}$, possibly zero. Multiplication by $(1-t)^{-1}$ sums along integer translates; it does not give $\sum_{0\leq\lambda<r}a_\lambda$ for general real exponents.

\begin{proposition}\label{prop:zero}
Let $Z=\{i\in\{1,\ldots,m\}:\alpha_i=0\}$, $I_-=\{i\in\{1,\ldots,m\}:\alpha_i<0\}$, $m_-=|I_-|$, and $P_-=\{p_i:i\in I_-\}$. Deleting the points $\{p_i:i\in Z\}$ leaves $H,J_{\rho,H},K_{\rho,H},\Sigma$, and $\Br(K)$ unchanged. Formula \eqref{eq:degree} equals
\[
\sum_{\substack{I\subset I_-\\\beta_I<r}}(-1)^{|I|}
\binom{\lfloor r-\beta_I\rfloor-\chi(\Om)+m_-}{\lfloor r-\beta_I\rfloor}.
\]
If every exponent is zero, then $H=h$, $\Sigma=c_N\N$, and for $kc_N<\rho<(k+1)c_N$,
\[
d_\rho(\Om,h)=\binom{k-\chi(\Om)}k,\quad  k\in\N_0.
\]
At zero the degree is one.
\end{proposition}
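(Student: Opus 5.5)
The plan has two parts. First I check that deleting the zero-exponent points changes none of the analytic or topological objects. Then I verify the algebraic cancellation in \eqref{eq:degree} directly.

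For the first part, by the convention in the introduction each factor with $\alpha_i=0$ equals one everywhere, so $H$ is literally unchanged. Hence $J_{\rho,H}$, $K_{\rho,H}$ and $d_\rho$ are unchanged. For $i\in Z$ we have $\beta_i=1$, so
\[
c_N\{\ell+\beta_I\}=c_N\{(\ell+|I\cap Z|)+\beta_{I\setminus Z}\}.
\]
Thus $\Sigma$ is the same set. The only delicate case is $\ell+|I|>0$ with $I\subset Z$, which produces $c_N\ell'$ with $\ell'\geq1$, already present. For $\Br(K)$, a support point at $p_i$ with $i\in Z$ has weight $\beta_i=1$, the same as an ordinary point, so $\card_\beta$ and hence $\Br(K)$ coincide. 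The polyhedral core $K$ may be kept the same, since containing extra points does no harm.

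For the algebraic identity, write $I=I_1\cup I_0$ with $I_1\subset I_-$ and $I_0\subset Z$, and set $j=|I_0|$, $z=|Z|$. Then $\beta_I=\beta_{I_1}+j$ and $\lfloor r-\beta_I\rfloor=k_{I_1}-j$, where $k_{I_1}=\lfloor r-\beta_{I_1}\rfloor$. Put $c=\chi(\Om)$. For fixed $I_1$ with $\beta_{I_1}<r$, the inner sum over $I_0$ is
\[
\sum_{j=0}^{\min(z,k_{I_1})}(-1)^j\binom zj\binom{k_{I_1}-j-c+m}{k_{I_1}-j}.
\]
I would evaluate it via generating series. The coefficient $\binom{k-c+m}{k}$ is the $t^k$ coefficient of $(1-t)^{-(m-c+1)}$, and $\sum_j(-1)^j\binom zj t^j=(1-t)^z$. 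The convolution is therefore the $t^{k}$ coefficient of $(1-t)^{-(m_- -c+1)}$, namely $\binom{k-c+m_-}{k}$, using $m=m_-+z$.

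The truncation at $j\leq k$ is automatic because only coefficients of index $\leq k$ enter. The constraint $\beta_I<r$ also matches: $j\leq k_{I_1}$ is equivalent to $j+\beta_{I_1}<r$ by nonresonance. This yields the reduced formula.

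If every exponent vanishes, then $I_-=\varnothing$ and $H=h$. Moreover $\Sigma=c_N\{\ell+|I|>0\}=c_N\N$. Only $I=\varnothing$ survives, giving $\binom{k-\chi(\Om)}{k}$ with $k=\lfloor r\rfloor$. Finally, $d_0=1$ because $K_{0,H}\equiv0$, so $\Id-K_{0,H}=\Id$ has degree one on $B_R^E$.

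The main obstacle is bookkeeping rather than analysis. I have to confirm that the sum over $I_0$ ranges exactly over $j$ with $j+\beta_{I_1}<r$. I also have to check that the negative-index binomial $\binom{k-c+m}{k}$ (with $c$ possibly exceeding $m$) equals the series coefficient for integer upper argument. This holds because $\binom{-n-1+k}{k}$ is the coefficient of $(1-t)^{n+1}$ inverted formally, valid for all integers $n$.
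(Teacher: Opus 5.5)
Your proposal is correct. The invariance part, for $H$, $J_{\rho,H}$, $K_{\rho,H}$, $\Sigma$ and $\Br(K)$ via the weighted support count, is exactly the paper's argument. For the algebraic identity you take a somewhat different route.

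The paper does not work with the closed-form binomials. It uses the intermediate expression $\sum_{\ell+\beta_I<r}(-1)^{\ell+|I|}\binom{c-m}{\ell}$ from the proof of Proposition~\ref{prop:Euler}. That expression identifies \eqref{eq:degree} with the sum of the coefficients $a_\lambda$, $\lambda<r$, of the real-exponent series $G(t)=(1-t)^{c-m}\prod_{i=1}^m(1-t^{\beta_i})$ in \eqref{eq:series}. The paper then notes that each factor $1-t^{\beta_i}$ with $\beta_i=1$ merges with $(1-t)^{c-m}$, so $G$ itself is unchanged. Nonresonance enters only in the conversion to floor form, which Proposition~\ref{prop:Euler} has already carried out.

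You instead fix $I_1\subset I_-$ and perform a Vandermonde-type convolution over the zero-exponent subsets. You read $\binom{k-c+m}{k}$ as the $t^k$ coefficient of the integer-exponent series $(1-t)^{c-m-1}$. This requires your two bookkeeping checks, and you handle both correctly:
\begin{itemize}
\item the truncation $j\le k_{I_1}$, which uses the nonintegrality of $r-\beta_{I_1}$;
\item the generalized binomial identity for negative integer upper arguments.
\end{itemize}
The paper warns that multiplication by $(1-t)^{-1}$ does not yield $\sum_{\lambda<r}a_\lambda$ for real exponents. That warning does not affect you, because for fixed $I_1$ you only sum over integer shifts.

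The paper's version is shorter and explains the cancellation structurally. Yours is self-contained at the level of the displayed formula and does not need the coefficient-sum interpretation.

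A third route is also available. Since $\Br(K)$ is literally the same space for $P$ and $P_-$, applying Proposition~\ref{prop:Euler} with each point set shows that both finite sums equal $1-\chi(\Br(K))$.
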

\begin{proof}
For $i\in Z$ the factor is identically one and $\beta_i=1$, so the equation and functional are unchanged. Every resonance satisfies
\[
\ell+\beta_I=(\ell+|I\cap Z|)+\beta_{I\cap I_-},
\]
preserving positivity; conversely every reduced resonance occurs originally. For any finite support $S$,
\[
\#(S\setminus P)+\sum_{\substack{1\leq i\leq m\\p_i\in S}}\beta_i
=\#(S\setminus P_-)+\sum_{\substack{i\in I_-\\p_i\in S}}\beta_i.
\]
Thus the barycenter spaces are literally the same subsets of $\Prob(K)$. Finally \eqref{eq:series} has the exact cancellation
\[
(1-t)^{c-m}\prod_{i=1}^m(1-t^{\beta_i})
=(1-t)^{c-m}(1-t)^{|Z|}\prod_{i\in I_-}(1-t^{\beta_i})
=(1-t)^{c-m_-}\prod_{i\in I_-}(1-t^{\beta_i}).
\]
Their coefficients summed below the threshold, hence the finite sums in Proposition~\ref{prop:Euler}, agree. No limit in the exponents is involved.
\end{proof}

\begin{proof}[Proof of Theorems~\ref{thm:degree} and \ref{thm:homotopy}]
Theorem~\ref{thm:compact} makes the degree admissible on a sufficiently large ball of $E$. If $r<\beta_*$, replacing $\rho$ by $t\rho$, $0\leq t\leq1$, stays nonresonant and admits a common ball. At zero the map is the identity and the degree is one. Coercivity makes sufficiently negative sublevels empty. In every other nonresonant interval, Theorems~\ref{thm:degreeEuler} and \ref{thm:reverse} give \eqref{eq:mainhomotopy}. Apply Proposition~\ref{prop:Euler} and $\chi(K)=\chi(\Om)$. The formula proves independence of $h$ and of the positions of the distinct points in $P$. At $\rho=0$, $K_{0,H}=0$, giving $d_0=1$ directly.
\end{proof}

\begin{proof}[Proof of Corollary~\ref{cor:existence}]
Theorem~\ref{thm:compact} makes the degree in \eqref{eq:degreedef} admissible on a sufficiently large ball. If $d_\rho(\Om,H)\ne0$, the existence property of the Leray--Schauder degree gives a fixed point of $K_{\rho,H}$. By \eqref{eq:solutionmap}, any such fixed point is a weak solution of \eqref{maineq}.
\end{proof}

\begin{proof}[Proof of Corollary~\ref{cor:onemark}]
Set $c=\chi(\Om)$ and apply \eqref{eq:degree} with $m=1$. For $\varepsilon<\beta$, only the empty subset occurs if $k=0$, giving one. For $k\geq1$ the two terms give $\binom{k-c+1}k-\binom{k-c}{k-1}=\binom{k-c}k$. If $\varepsilon>\beta$, both floor values are $k$ and the terms cancel.
\end{proof}

\begin{proof}[Proof of Corollary~\ref{cor:equalmarks}]
For $r<1$, no ordinary point can occur. Each subset of at most $j$ points of $P$ contributes $(-1)^{|I|}$, and the alternating partial binomial sum gives the formula. For $m=2$ in the specified interval, the barycenter space consists of two points, so $1-\chi=-1$.
\end{proof}

\appendix
\section{Proofs of the technical lemmas}\label{app:technical}
The proofs follow the order in which the lemmas are used. We first establish the measure-data compactness needed for the nonlinear limit in Section~\ref{sec:compact}.
\begin{proof}[Proof of Lemma~\ref{lem:measure}]
Set $w_j=z_j-\varphi\geq0$ by comparison. For a fixed small $\delta>0$, \eqref{eq:BM} gives
\[
\sup_{j\in\N}\int_D e^{\delta w_j}\dd x<\infty.
\]
Thus $(w_j)_{j\in\N}$ is bounded in every finite $L^a$. Subtract the equation for $\varphi$ and test with $1-(1+w_j)^{-1}$. Inequality \eqref{eq:monotone} yields
\[
d_N\int_D\frac{|\nabla w_j|^N}{(1+w_j)^2}\dd x\leq M.
\]
For $1\leq a<N$, H\"older's inequality gives
\[
\int_D|\nabla w_j|^a\dd x\leq(M/d_N)^{a/N}
\Big(\int_D(1+w_j)^{2a/(N-a)}\dd x\Big)^{(N-a)/N}\leq C_a.
\]
Rellich's compactness theorem gives a subsequence with $w_j$ convergent in $L^1$ and almost everywhere. For $k>0$, set $T_k(t)=\max(-k,\min(t,k))$. The common trace makes $T_k(w_j-w_l)$ admissible in the difference equation, giving
\[
d_N\int_{\{x\in D:|w_j(x)-w_l(x)|<k\}}|\nabla w_j-\nabla w_l|^N\dd x\leq2Mk.
\]
Thus
\[
|\{|\nabla w_j-\nabla w_l|>\varepsilon\}|
\leq|\{|w_j-w_l|\geq k\}|+\frac{2Mk}{d_N\varepsilon^N}.
\]
First let $j,l\to\infty$, then $k\downarrow0$: the gradients are Cauchy in measure. For each $a<N$, use a bound at $a<a'<N$ to obtain uniform integrability and strong $L^a$ convergence. The common $\varphi\in W^{1,N}$ can be added back. Choosing $a>N-1$ gives convergence of $|\nabla z_j|^{N-2}\nabla z_j$ in $L^{a/(N-1)}$, hence in $L^1$, by continuity of $\xi\mapsto|\xi|^{N-2}\xi$ and uniform integrability. The distributional equation and trace pass to the limit. The proof uses only boundedness and energy regularity of the fixed $N$-harmonic function $\varphi$.
\end{proof}

With the analytic compactness established, we turn to the degree comparison of Section~\ref{sec:deformation}. The first step identifies the nonlinear solution map with its Galerkin approximations without changing the degree sign.
\begin{proof}[Proof of Lemma~\ref{lem:Galerkin}]
$A_n$ is the gradient of the strictly convex coercive restriction of $Q$, hence a homeomorphism of $E_n$. Set $K_n=A_n^{-1}\Gcal_n:E\to E_n$. On any bounded $D\subset E$, testing against $K_n(u)$ gives $\norm{K_n(u)}_E^{N-1}\leq\norm{\Gcal(u)}_{E^*}$, so these maps and $K_\rho$ have uniformly bounded images. Write $w=K_\rho(u)$ and $w_n=K_n(u)$. Galerkin orthogonality and \eqref{eq:monotone}, for $z\in E_n$, give
\[
d_N\norm{w_n-w}_E^N\leq\pair{A(w_n)-A(w)}{w_n-w}
=\pair{A(w_n)-A(w)}{z-w}\leq C_D\norm{z-w}_E.
\]
The relatively compact set $K_\rho(D)$ is approximated uniformly by the increasing dense spaces $E_n$. Thus $K_n\to K_\rho$ uniformly on bounded sets.

Compactness implies $\inf_{u\in\partial U}\norm{u-K_\rho(u)}_E>0$: a contradicting sequence would converge via its compact images to a boundary fixed point. For large $n$, the straight homotopy between $K_\rho,K_n$ is admissible. Leray--Schauder finite-dimensional reduction gives
\[
\degLS(\Id-K_\rho,U,0)=\degB(\Id-K_n,U\cap E_n,0).
\]
To fix the sign consider
\[
H_s(u)=(1-s)[A_n(u)-A_n(K_n(u))]+s[u-K_n(u)].
\]
For $d=u-K_n(u)\ne0$,
\[
(H_s(u),d)_n=(1-s)\pair{A(u)-A(K_n(u))}{d}+s\norm d_n^2>0.
\]
Thus the zeros are exactly $u=K_n(u)$ throughout the homotopy. The endpoints are $A_n-\Gcal_n$ and $\Id-K_n$, proving the degree identity with the stated sign.
\end{proof}

The Galerkin comparison will be applied to a bounded set between two energy levels. We next prove the deformation needed for this localization while keeping trajectories inside $\Mcap$.
\begin{proof}[Proof of Lemma~\ref{lem:capstrip}]
Every family critical point has $F<C-2$. By compactness and the hypothesis, choose $\varepsilon\in(0,1/4)$ so that $J$ has no critical point in
\[
\{a-2\varepsilon\leq J\leq b+2\varepsilon, F\leq C+2\varepsilon\}.
\]
Otherwise exact critical points tending to this excluded strip would converge to a critical point in it. Set $D=\{a-\varepsilon<J<b+\varepsilon,F<C+\varepsilon\}$. It is bounded by \eqref{eq:normidentity}. Lemma~\ref{lem:Splus} shows that
\[
\inf_{u\in D}\norm{J'(u)}_{E^*}>0,\quad 
\inf_{\substack{u\in D\\F(u)>C-1}}\min_{\lambda\in[\rho-\delta,\rho]}\norm{J_\lambda'(u)}_{E^*}>0,
\]
when the indicated sets are nonempty: a violating bounded sequence converges strongly to an excluded critical point of $J$ or a family critical point with $F\geq C-1$.

On $D\cap\{F<C-1/2\}$ choose local unit directions decreasing $J$. On $D\cap\{F>C-1\}$ separate the segment joining $J',F'$ from zero and choose directions decreasing both. These open sets cover $D$. Shrink neighborhoods in the first group to remain in $F<C-1/2$. A locally Lipschitz partition gives a field $V_D$ with $\norm{V_D}_E\leq1$, $J'V_D\leq-\eta<0$ on $D$, and $F'V_D\leq-\eta$ whenever $F\geq C-1/2$. The uniform lower bounds supply one $\eta>0$.

For $u_0\in\Mcap$ with $a<J(u_0)\leq b$, the flow stays inside $\Mcap$, since $F$ strictly decreases near its boundary, and stays below its initial $J$ value. Before $J=a$ the trajectory remains in $D$. Bounded speed gives continuation, and $\tau(u_0)\leq(J(u_0)-a)/\eta$ gives finite contact. Continuous dependence and strict crossing give continuity of $\tau$ on the moving part; the displayed estimate gives continuity when extended by zero on $\Mcap\cap J^a$. Stop at time $s\tau(u_0)$ and leave the lower set fixed. This is the required sublevel-preserving strong deformation retraction.
\end{proof}

We can now compare the constrained relative homology with a finite-dimensional section. The proof first establishes \eqref{eq:homologyreduction} and then uses the degree sign fixed in Lemma~\ref{lem:Galerkin} to obtain \eqref{eq:EulerBrouwer}.
\begin{proof}[Proof of Lemma~\ref{lem:penalty}]
Set $\sigma=\rho/\delta$, $\tau=\rho/\delta-1>0$. For a finite-dimensional $Y\subset E$, $L\geq0$, and $0\leq t\leq1$, define
\[
d_Y(u)=\dist(u,Y),\quad  J_{t,Y}=J+tLd_Y,\quad  F_{t,Y}=F+tLd_Y.
\]
Adding $tLd_Y$ to both functionals transforms \eqref{eq:normidentity} into
\begin{equation}\label{eq:penaltyidentity}
Q(u)+tLd_Y(u)=\sigma F_{t,Y}(u)-\tau J_{t,Y}(u).
\end{equation}
Thus $\{a-1\leq J_{t,Y}\leq b+1,F_{t,Y}\leq C+1\}$ is contained in one fixed ball $B_R^E$, $R>1$, independently of $Y,t,L$. Choose $L>2\max\{\Lip(J),\Lip(F)\}+1$, with the Lipschitz constants on $B_{3R+1}^E$. These constants are finite by Proposition~\ref{prop:functional}.

We first establish uniform separation near the three constraint boundaries. For a locally Lipschitz function $G$, let
\[
G^\circ(u;v)=\limsup_{z\to u, s\downarrow0}\frac{G(z+sv)-G(z)}s,
\quad
\partial_CG(u)=\{\xi\in E^*:\pair\xi v\leq G^\circ(u;v), \forall\, v\in E\}.
\]
For the convex function $d_Y$, $\partial_Cd_Y$ equals its convex subdifferential, and
\begin{equation}\label{eq:distancesubgrad}
\partial d_Y(u)=\{\xi\in E^*: \norm\xi\leq1, \xi|_Y=0, \pair\xi u=d_Y(u)\}.
\end{equation}
The inequality defining a convex subgradient proves the annihilation and norm conditions; evaluating it at $0$ and $2u$ gives the last equality. Conversely these three conditions imply $d_Y(v)\geq d_Y(u)+\pair\xi{v-u}$. Hahn--Banach on $E/Y$ gives nonemptiness. The $C^1$ sum rule is exact here:
\[
\partial_CJ_{t,Y}(u)=J'(u)+tL\partial d_Y(u),\quad
\partial_CF_{t,Y}(u)=F'(u)+tL\partial d_Y(u).
\]
Indeed the differentiable summand has a uniform first-order expansion in the definition of $G^\circ$, and the convex directional derivative of $d_Y$ is the support function of \eqref{eq:distancesubgrad}.

There are $\varepsilon,\eta>0$ such that for all sufficiently large $n$, $Y=E_n$, all $t\in[0,1]$, and all $u$ with
\[
a-\varepsilon\leq J_{t,Y}(u)\leq b+\varepsilon,\quad F_{t,Y}(u)\leq C+\varepsilon,
\]
lying within $\varepsilon$ of at least one boundary level $J_{t,Y}=a$, $J_{t,Y}=b$, $F_{t,Y}=C$, the convex hull of the two subdifferentials has distance at least $\eta$ from zero. If not, a sequence with $n\to\infty$, difference from the boundary level tending to zero, and bounded $u_n$ would satisfy
\begin{equation}\label{eq:penaltycrit}
J_{\lambda_n}'(u_n)+t_nL\xi_n\to 0\quad\text{in }E^*,\quad 
\lambda_n\in[\rho-\delta,\rho],\quad\xi_n\in\partial d_{E_n}(u_n).
\end{equation}
Here convexity of $\partial d_{E_n}$ combines the two subgradients into one; $L$ was fixed before $n$ was chosen. Extract $u_n\weak u$, $\lambda_n\to\lambda$, and choose $y_n\in E_n$ with $y_n\to u$. Equation \eqref{eq:distancesubgrad} gives
\[
\pair{\xi_n}{u_n-u}=d_{E_n}(u_n)+\pair{\xi_n}{y_n-u}\geq-o(1).
\]
Pairing \eqref{eq:penaltycrit} with $u_n-u$ gives
\[
\limsup_{n\to\infty}\pair{A(u_n)}{u_n-u}\leq0,
\]
because $B'(u_n)$ converges strongly and the distance-subgradient pairing is nonnegative up to $o(1)$. Subtracting $\pair{A(u)}{u_n-u}\to0$ and using \eqref{eq:monotone} gives $u_n\to u$. Therefore
\[
d_{E_n}(u_n)\leq\norm{u_n-u}+\norm{u-y_n}\to 0.
\]
For every fixed $v$ in the increasing dense union of the $E_n$, the term $\pair{\xi_n}v$ vanishes eventually. Testing \eqref{eq:penaltycrit} with $v$, and then using density, gives $J_\lambda'(u)=0$. The modified values converge to $J(u),F(u)$, since $t_nL d_{E_n}(u_n)\to0$. Passing to a subsequence, one of the three defining functions tends to its boundary value, so $J(u)=a$, $J(u)=b$, or $F(u)=C$. Each alternative contradicts \eqref{eq:facegap}. The contradiction with \eqref{eq:facegap} proves separation uniformly in $t$ for all sufficiently large $n$.

Here is the local persistence needed to turn separation into a field. Fix a sufficiently large $Y=E_n$. The sets $\partial d_Y(u)$ are weak-* compact subsets of the unit dual ball. Their graph is closed for norm convergence of $u$ and weak-* convergence of $\xi$: pass to the limit in
\[
d_Y(v)\geq d_Y(u_j)+\pair{\xi_j}{v-u_j}\quad(v\in E).
\]
For a fixed $v\in E$, the support function $u\mapsto\max_{\xi\in\partial d_Y(u)}\pair\xi v$ is upper semicontinuous. Otherwise maximizing subgradients along a contradicting sequence would have a weak-* convergent subsequence in the unit ball (which is metrizable because $E$ is separable), and the closed graph would contradict the alleged strict increase. The same argument, with $t_j\to t$ and $\lambda_j\to\lambda$, applies to
\[
\mathscr G(t,u)=\{J_\lambda'(u)+tL\xi:\lambda\in[\rho-\delta,\rho],\ \xi\in\partial d_Y(u)\}.
\]
The set $\mathscr G(t,u)$ is exactly the weak-* compact convex hull of $\partial_CJ_{t,Y}(u)$ and $\partial_CF_{t,Y}(u)$. Continuity of $J',F'$ and compactness of the parameter interval verify every extra limit. Separate $\mathscr G(t_0,u_0)$ from the open dual ball of radius $\eta/2$. Reflexivity identifies the separating functional with $v\in E$, $\norm v\leq1$, oriented so that
\[
\sup_{\zeta\in\mathscr G(t_0,u_0)}\pair\zeta v\leq-\eta/2.
\]
Upper semicontinuity makes the same constant vector satisfy a bound $-\eta/4$ on an open neighborhood of $(t_0,u_0)$. Thus pointwise separation supplies a locally valid common descent direction, with no assertion that the subgradients vary in norm.

For the continuation, suppress $Y$ and write $J_t,F_t$. Define
\[
N_t=\{a\leq J_t\leq b,F_t\leq C\},\quad L_t=\{J_t=a,F_t\leq C\},\quad Q_t=N_t/L_t.
\]
A quotient by an empty subset means adjoining a disjoint basepoint; use this convention throughout. The separation holds on an open neighborhood of the three constraint boundaries in $[0,1]\times E$. Choose smaller closed neighborhoods of these boundaries inside it. The local directions just proved, scaled by a common constant, and the zero vector on the complement of these smaller closed neighborhoods, form an open cover construction. A locally finite locally Lipschitz partition on the whole product gives a bounded locally Lipschitz field $W:[0,1]\times E\to E$ with
\begin{equation}\label{eq:Clarkedecay}
J_t^\circ(u;W(t,u))\leq-LR-1,\quad 
F_t^\circ(u;W(t,u))\leq-LR-1
\end{equation}
on the smaller boundary neighborhoods. Their closures can be chosen inside the bounded set defined by the enlarged level inequalities. Convex combinations preserve both inequalities; on these neighborhoods the zero part has zero coefficient. Constructing the partition on the whole product avoids extending a field across a boundary with uncontrolled local Lipschitz constants.

Let $\gamma:[0,T]\to[0,1]$ be piecewise $C^1$, $|\dot\gamma|\leq1$, from $s$ to $t$. Solve
\[
\dot z(r)=W(\gamma(r),z(r)),\quad  z(0)=u.
\]
The equation for $z$ has a unique local solution. If its maximal interval had a finite endpoint, boundedness of $W$ would make $z(r)$ Cauchy at that endpoint. Completeness of $E$ provides a limit, and local existence there continues the solution. Thus it exists throughout $[0,T]$. Continuous dependence on initial data and on a uniformly varying path $\gamma$ follows by covering each fixed compact solution curve by finitely many neighborhoods on which $W$ has a Lipschitz bound, and applying Gronwall's inequality on these neighborhoods in order. No bounded subset of $E$ is being treated as compact. The elementary Lipschitz chain inequality gives almost everywhere
\[
\frac{\dd}{\dd r}J_{\gamma(r)}(z(r))
\leq J_{\gamma(r)}^\circ(z(r);\dot z(r))+L\dot\gamma(r)d_Y(z(r)),
\]
and likewise for $F$. On the relevant constrained set $d_Y(z)\leq R$, so \eqref{eq:Clarkedecay} makes the defining function of every active moving constraint decrease at rate at least one. The chain inequality for $J_{\gamma(r)}(z(r))$ also follows directly by taking upper difference quotients and using local Lipschitz continuity to replace increments of the $C^1$ path by $h\dot z$; it requires no differentiability of $d_Y$. An orbit from $N_s$ therefore stays within the upper and cap constraints until its first contact with the moving lower level.

Let $e_\gamma(u)$ be the first contact time with the moving lower level, with value $+\infty$ if there is no contact during $[0,T]$, and value zero for $u\in L_s$. Define
\[
\Theta_\gamma(u)=
\left\{\begin{aligned}
&*,&&e_\gamma(u)\leq T,\\
&q_t(z(T)),&&e_\gamma(u)>T,
\end{aligned}\right.
\quad  u\in N_s,
\]
where $q_t:N_t\to Q_t$ is the quotient map and $*$ is its basepoint. The inequalities at the active upper level and constraint boundary prevent an orbit from leaving those constraints before lower contact. This follows from strict decrease in a neighborhood of each active constraint boundary; it remains valid at simultaneous contacts. Thus every nonexiting endpoint belongs to $N_t$.

We verify continuity before passing to the quotient. If $e_\gamma(u)>T$, the positive continuous gap $J_{\gamma(r)}(z(r))-a$ has a positive minimum on $[0,T]$, so nearby data and paths also do not exit, and flow continuity applies. If $e_\gamma(u)<T$, the gap is strictly negative just after first contact. Nearby trajectories therefore exit before $T$. If $e_\gamma(u)=T$, any sequence of nearby nonexiting endpoints converges in $N_t$ to $z(T)\in L_t$, and its quotient images converge to $*$. Exiting endpoints already have image $*$. These assertions also hold when the path varies in any of the continuous parameter families below with fixed endpoints. At $u\in L_s$, strict crossing gives exit for all nearby initial data when the duration is positive. When a constant-path duration decreases to zero, bounded speed makes each nonexiting endpoint tend to $u$, so its image again tends to $*$. Since these domains of representatives and parameters are metric, these sequential verifications prove continuity into the quotient, even if the quotient itself is not metrizable.

If $L_s\ne\varnothing$, $\Theta_\gamma$ is constant on $L_s$ and hence induces a pointed continuous map $Q_s\to Q_t$. Joint continuity of each induced homotopy follows from
\begin{equation}\label{eq:quotientproduct}
q_s\times\Id_I:N_s\times I\to  Q_s\times I
~\text{being a quotient map},\quad  I=[0,1].
\end{equation}
Since $I$ is locally compact Hausdorff, Whitehead's quotient-product lemma \cite[p.~288]{Michael} gives \eqref{eq:quotientproduct}. No local compactness of $N_s$, $L_s$, or $Q_s$ is required. If $L_s=\varnothing$, we have $Q_s=N_s\sqcup\{*\}$ and extend all maps and homotopies by fixing its isolated basepoint; no quotient-product argument is needed in that case. The same convention applies when $N_s=\varnothing$.

Use the paths $\gamma(r)=r$ and $\gamma(r)=1-r$ on $[0,1]$ to obtain pointed maps $Q_0\to Q_1$ and $Q_1\to Q_0$. Concatenation agrees with composition by uniqueness of the flow until first exit and the convention that an orbit stays at the basepoint after its first exit. To verify the homotopy inverse assertion explicitly, set
\[
\ell(r)=\left\{\begin{aligned}
  &r,&&0\leq r\leq1,\\&2-r,&&1\leq r\leq2.  
\end{aligned}\right.
\]
The family $(1-v)\ell(r)$, $0\leq v\leq1$, contracts the loop from zero to one and back to the constant path at zero, on the fixed time interval $[0,2]$. Its endpoints remain zero and its speed is at most one. The preceding joint continuity therefore gives a homotopy of self-maps of $Q_0$. The terminal map is the constant-parameter stopped flow for time two, which is homotopic to the identity by using durations $2(1-v)$. For $Q_1$ use the family $1-(1-v)\ell(r)$. This proves $Q_0\simeq Q_1$ as pointed spaces, including when an exit set is empty.

Before identifying relative homology, we establish the corresponding separation near the boundary levels within $Y$. For all sufficiently large $n$, the restricted covectors
\[
\{J_\lambda'(u)|_{E_n}:\lambda\in[\rho-\delta,\rho]\}
\]
stay a positive distance from zero near the three constraint boundaries of $\{u\in E_n:a\leq J(u)\leq b,F(u)\leq C\}$. The dual norms here are induced by $\norm{\cdot}_E$ on $E_n$. Otherwise there are $n\to\infty$, bounded $u_n\in E_n$, parameters $\lambda_n$ for which the restricted norms tend to zero and the difference from at least one boundary level tends to zero. Pass to $u_n\weak u$, $\lambda_n\to\lambda$, and choose $y_n\in E_n$ with $y_n\to u$. Testing against $u_n-y_n$ gives
\[
\pair{A(u_n)}{u_n-u}
=\pair{J_{\lambda_n}'(u_n)}{u_n-y_n}
 +\lambda_n\pair{B'(u_n)}{u_n-y_n}
 +\pair{A(u_n)}{y_n-u}\to 0.
\]
The first term tends to zero by the restricted norm bound, the second by compactness of $B'$, and the third by boundedness of $A(u_n)$ and strong convergence of $y_n$. Monotonicity implies $u_n\to u$. Tests on the dense union then give $J_\lambda'(u)=0$ on an excluded boundary level, contradicting \eqref{eq:facegap}. Choose $n$ large enough for both this separation and the separation for the modified functionals. Local separation and a partition of unity now give a field taking values in $Y$ and decreasing both restricted functions near the relevant constraint boundaries.

Set $P_t=\{J_t\leq b,F_t\leq C\}$ and $A_t=\{J_t\leq a,F_t\leq C\}$. When $A_t\ne\varnothing$, the pair $(P_t,A_t)$ is a good pair in the following precise sense: $A_t$ is closed and is a strong deformation retract of a neighborhood in $P_t$. Indeed take
\[
V_t=\{u\in P_t:J_t(u)<a+\varepsilon_1\}
\]
with $\varepsilon_1>0$ chosen so that $V_t\setminus A_t$ is contained in the neighborhood of the lower level. Flow at the fixed parameter $t$ and stop at $J_t=a$, leaving $A_t$ fixed. The cap is invariant, and the moving portion is bounded by \eqref{eq:penaltyidentity}. Uniform strict decrease gives a contact time bounded by a constant times $J_t(u)-a$. Continuous dependence and strict crossing give continuity of this time on $V_t\setminus A_t$, and the bound makes its extension by zero continuous on $A_t$. The stopped homotopy stays in $V_t$ and fixes $A_t$.

The same construction using the $Y$-valued field just obtained proves that
\[
P^Y=Y\cap\{J\leq b,F\leq C\},\quad 
A^Y=Y\cap\{J\leq a,F\leq C\}
\]
is a good pair whenever $A^Y\ne\varnothing$. Its moving part is contained in a compact closed collar, so it has the same uniform contact bound. This separate argument is necessary because the ambient field need not be tangent to $Y$.

For the pairs $(P_t,A_t)$ and $(P^Y,A^Y)$, \cite[Proposition 2.22]{Hatcher} gives
$H_*(P_t,A_t)\cong\widetilde H_*(P_t/A_t)$ and
$H_*(P^Y,A^Y)\cong\widetilde H_*(P^Y/A^Y)$.
For an empty lower member use the direct identities
$H_*(P,\varnothing)=H_*(P)=\widetilde H_*(P\sqcup\{*\})$ instead.
The closed cover $P_t=A_t\cup N_t$, with intersection $L_t$, gives
\[
P_t/A_t\cong N_t/L_t.
\]
One map is induced by inclusion of $N_t$ into $P_t$. For its inverse, use the constant basepoint map on $A_t$ and the quotient map on $N_t$. They agree on $L_t$, so the closed pasting lemma gives a continuous map on $P_t$, which descends to $P_t/A_t$. The two maps are inverse. If $L_t$ is empty but $A_t$ is nonempty, the two pieces of the closed cover are also open, and the collapsed $A_t$ is precisely the adjoined isolated basepoint. The all-empty case follows from the same convention. The identical argument applies to $(P^Y,A^Y)$. We obtain
\begin{equation}\label{eq:relativecontinuation}
H_*(P_0,A_0)\cong H_*(P_1,A_1).
\end{equation}

At $t=1$ let $\pi:E\to Y$ be the metric projection.
A nearest point exists because $Y$ is finite dimensional and is unique
because $E$, with its gradient norm, is strictly convex.
The projection is continuous: if $u_j\to u$, then
$\norm{\pi(u_j)}\leq2\norm{u_j}$, and every subsequential limit of
$(\pi(u_j))$ minimizes the distance to $u$. Uniqueness identifies that
limit with $\pi(u)$. Thus $\pi|_Y=\Id$ and
$\norm{u-\pi(u)}=d_Y(u)$. For bounded representatives $u\in N_1$, set $v_s=(1-s)u+s\pi(u)$. Then
\[
d_Y(v_s)=(1-s)d_Y(u),\quad  \norm{v_s}\leq2R.
\]
Let $K_0$ be the larger of the two Lipschitz constants used in choosing $L$. Since
$\norm{v_{s_2}-v_{s_1}}=(s_2-s_1)d_Y(u)$ for $s_2\geq s_1$, we have
\[
\begin{split}
J_{1,Y}(v_{s_2})-J_{1,Y}(v_{s_1})
&\leq(K_0-L)(s_2-s_1)d_Y(u)\\
&\leq-(s_2-s_1)d_Y(u),
\end{split}
\]
and the same for $F_{1,Y}$. The distance identity follows in the normed quotient $E/Y$ because $[v_s]=(1-s)[u]$. Thus the path preserves upper and cap constraints; once it reaches $A_1$ it stays there. For $u\in L_1$ the whole path is collapsed. Composition with $P_1\to P_1/A_1$ therefore yields a homotopy on $N_1/L_1$, jointly continuous by \eqref{eq:quotientproduct}, that retracts it to its subspace represented in $Y$. Only the bounded representatives are used; no Lipschitz estimate on the unbounded $A_1$ is needed. The finite-dimensional quotient has its expected topology. The set $Y\cap N_1$ is compact. The space $N_1/L_1$ is Hausdorff because $N_1$ is metric and $L_1$ is closed: two points outside $L_1$ have disjoint small neighborhoods avoiding it; a point outside $L_1$ and the collapsed set are separated by a small ball and the complement of a larger closed ball, both saturated for the quotient. If $L_1=\varnothing$, this is immediate. Thus the natural injection
\[
(Y\cap N_1)/(Y\cap L_1)\to  N_1/L_1,
\]
with the stated basepoint convention, is a homeomorphism onto its image. The homotopy fixes that image because $\pi|_Y=\Id$, and its terminal map has image there. The preceding good-pair and closed-cover identifications, applied within $Y$, now give
\[
H_*(P_1,A_1)\cong
H_*(Y\cap\{J\leq b,F\leq C\},Y\cap\{J\leq a,F\leq C\}).
\]
Together with \eqref{eq:relativecontinuation} this proves \eqref{eq:homologyreduction}.

Having proved \eqref{eq:homologyreduction}, we turn to \eqref{eq:EulerBrouwer}. Set $f=J|_Y$, $g=F|_Y$. The separation of the restricted derivatives near the boundary levels applies to $f,g$ on this compact constrained set.

Approximate $f,g$ in $C^1$ on a neighborhood of this set by functions $\widehat f,\widehat g$ smooth there, keeping their value errors at most $\varepsilon$ on all of $Y$ and the perturbations supported in a larger ball. Local convolution followed by a cutoff gives such approximations. For $f_s=(1-s)f+s\widehat f$, $g_s=(1-s)g+s\widehat g$, the identity $Q=\sigma g-\tau f$ implies
\[
Q(y)\leq\sigma C-\tau a+(\sigma+\tau)\varepsilon
\quad\text{ on }\,\{a\leq f_s\leq b,g_s\leq C\}.
\]
All sets defined by the perturbed inequalities are contained in a fixed ball, and their boundaries stay in the chosen neighborhoods for small $\varepsilon$, and small $C^1$ errors preserve separation. Also $|\partial_sf_s|,|\partial_sg_s|\leq\varepsilon$. A fixed scaling of the common field dominates these parameter derivatives, so the same stopped quotient continuation identifies the original and perturbed relative homology.

Choose $\widehat f$ Morse on this ball by a small generic linear perturbation of a smooth approximation: a regular value of its gradient gives nonsingular Hessians at all resulting critical points. Sard's theorem gives such choices arbitrarily small. The perturbation is cut off outside the larger ball. Further arbitrarily small perturbations of constants and of $\widehat g$ make the boundary levels and their intersections regular and transverse, without destroying separation. All resulting critical points in the constrained set are finite, since they are isolated in a compact subset away from its boundary. Their critical values can be made distinct by adding separate small constants in disjoint neighborhoods of these points, with cutoffs supported in slightly larger neighborhoods. The Hessians and critical points in the smaller neighborhoods are unchanged, and a sufficiently small derivative perturbation introduces no zero on their compact complement.

The constrained calculation is on $\widehat{\Mcap}=\{\widehat g\leq C\}$. At a boundary point between the two energy levels there exists a vector $v$ with $d\widehat g(v)<0$ and $d\widehat f(v)<0$. Critical points of $\widehat f|_{\{\widehat g=C\}}$ may exist, but there
\[
\nabla\widehat f=\lambda\nabla\widehat g,\quad  \lambda>0,
\]
as pairing with $v$ shows. At such a tangential critical point, $d\widehat g(-\nabla\widehat f)=-\lambda|\nabla\widehat g|^2<0$, so the negative gradient points strictly into the constraint. At the other boundary points use the common descending direction supplied by separation. Choose disjoint Morse neighborhoods of the interior critical points, with closures in $\{\widehat g<C\}$. On their complement take local fields decreasing $\widehat f$, also decreasing $\widehat g$ near the constraint boundary. Patch by a partition of unity, keeping the usual Morse descending field in smaller Morse neighborhoods. Compactness gives a uniform descent rate on every closed strip outside these neighborhoods, and the inward cap condition prevents boundary exit. Thus all such portions retract to their lower levels in finite time.

For a critical point of index $k$, \cite[Lemma 2.2]{MilnorMorse} supplies coordinates $(x_-,x_+)\in\R^k\times\R^{\dim Y-k}$ with $\widehat f=\widehat f(y)-|x_-|^2+|x_+|^2$. A sufficiently small Morse neighborhood is wholly inside the cap. Across a small interval containing its critical value, the local attachment is the pair $(D^k\times D^{\dim Y-k},\partial D^k\times D^{\dim Y-k})$, which retracts relatively to $(D^k,\partial D^k)$ by shrinking the positive coordinates. The cell attachment theorem \cite[Theorem 3.2]{MilnorMorse} identifies this local crossing; the descending field on its complement gives its attachment to the constrained lower sublevel. The compactness hypothesis of that theorem is supplied here by the bounded finite-dimensional set between the two levels, and its local construction is inside the cap. Distinct critical values allow these crossings to be ordered. All remaining strips retract by the preceding field, including at the cap boundary. Hence each interior critical point contributes one cell of its Morse index, and the boundary contributes none.

After collapse of the lower member the quotient thus has the homotopy type of a finite CW complex, with its basepoint and one cell of dimension $\operatorname{ind}(y)$ for each interior critical point. This concerns the relative quotient, not a finite CW model for the unbounded lower set. The relative Euler characteristic, equivalently the reduced Euler characteristic of this pointed quotient, is
\[
\sum_{\substack{y\in\widehat U\\\nabla\widehat f(y)=0}}(-1)^{\operatorname{ind}(y)}
=\degB(\nabla\widehat f,\widehat U,0),\quad 
\widehat U=\{a<\widehat f<b,\widehat g<C\}.
\]
There are no gradient zeros in the neighborhoods of the boundary levels throughout the perturbation. Excision gives one common isolating open set inside the sets defined by the original and perturbed inequalities. Brouwer homotopy invariance gives \eqref{eq:EulerBrouwer}. The cell count gives finite total rank, and \eqref{eq:homologyreduction} transfers it to the original pair, without any nondegeneracy assumption on the original functional.
\end{proof}

This completes the finite-dimensional comparison. For the local contractions in Section~\ref{sec:topology}, it remains to prove the averaging and dilation estimates, beginning with the construction of constant spherical traces.
\begin{proof}[Proof of Lemma~\ref{lem:radialize}]
Let $\mathcal R_cu$ be the spherical average about $c$. Choose a smooth radial cutoff $\zeta$, with $0\leq\zeta\leq1$, vanishing on the inner ball and near the outer sphere, equal to one on an intermediate annulus, with $|s\zeta'(s)|\leq\eta$. The two transitions can be made in the variable $\ln  s$; the annular ratio can be chosen large enough for any prescribed $\eta>0$. Set $\mathcal A_tu=u+t\zeta(\mathcal R_cu-u)$ in the ball and $u$ outside. Then
\[
\nabla\mathcal A_tu=(1-t\zeta)\nabla u+t\zeta\nabla\mathcal R_cu
+t\zeta'(\mathcal R_cu-u)\frac{x-c}{|x-c|}.
\]
On each sphere, convexity and Jensen's inequality bound the integral of the $N$th power of the first two terms by that of $|\nabla u|^N$. The Poincar\'e inequality on spheres bounds the last term's $N$th power by $C_N\eta^N$ times the same integral. Using $|v+w|^N\leq(1+\delta)|v|^N+C_{N,\delta}|w|^N$, first choose $\delta<\varepsilon$ and then $\eta$ small enough. Radial integration proves the energy bound.

Jensen's inequality for the exponential function on each sphere gives
\[
\int_{\partial B_s(c)}e^{\mathcal A_tu}\dd S_x\leq\int_{\partial B_s(c)}e^u\dd S_x
\]
for almost every $s$. Multiply by $s^{N\widehat\alpha}$, integrate, and use \eqref{eq:ballweight}; the weighted mass in the ball grows by at most $\kappa^{-1}$. The exterior is fixed. The lower mass bound comes from the fixed inner ball, and the functional bound follows. Spherical averaging and cutoff multiplication are bounded linear Sobolev operators on the annulus; the same estimates and the Poincar\'e inequality control the global $E$ norm. The cutoff gives equal traces at the joining spheres, and dependence on $t$ is continuous. For disjoint balls the energies add and the preserved inner masses are summed, proving all assertions.
\end{proof}

The constant spherical traces obtained in Lemma~\ref{lem:radialize} allow the following dilation and logarithmic interpolation.
\begin{proof}[Proof of Lemma~\ref{lem:dilate}]
At the inner joining sphere $\gamma_N\theta_i\ln  s_i=\gamma_N\ln  s$, and at the outer sphere the logarithmic term is zero. The traces agree, so Sobolev gluing applies. Critical-dimensional dilation preserves the core's $N$-energy. The energy on the annulus is
\[
\frac{S_N}{N}(\gamma_N\theta_i)^N\ln  s_i=qc_N\omega_i\ln  s.
\]
Summing over $1\leq i\leq k$ proves the energy identity in \eqref{eq:dilation}; replacing $w$ by $(1-t)w$ gives the identity for $\mathcal D_{s,t}w$. Write $h_i=h_{c_i}$. Core change of variables gives
\begin{align*}
\int_{B_{R_i/s_i}(c_i)}He^{\mathcal C_sw}\dd x
&=s_i^{-N(1+\widehat\alpha_i)}s^{\gamma_N}
\int_{B_i}h_i(c_i+(y-c_i)/s_i)|y-c_i|^{N\widehat\alpha_i}e^{w(y)}\dd y\\
&\geq\kappa s^q\int_{B_i}He^w\dd x,
\end{align*}
because $s_i^{N(1+\widehat\alpha_i)}=s^N$ and $\gamma_N-N=q$. For continuity, decompose $w=b_i+v_i$ in $B_i$, with $v_i\in W^{1,N}_0(B_i)$. Its zero extension has strongly continuous dilations, proved first for smooth functions and then by density. The truncated logarithmic terms depend continuously in $W^{1,N}$ on their positive truncation radii, including $s=1$. The constants $b_i$ are continuous linear trace averages on the specified subspace. These observations give joint continuity of both families.
\end{proof}

\medskip

\noindent\textbf{Data availability statement.}~	 Data sharing is not applicable to this article as no datasets were generated or analysed during the current study.

\medskip

\noindent\textbf{Declarations.}~ \textbf{Conflict of interest.} ~ On behalf of all authors, the corresponding author states that there is no conflict of interest.

\medskip

\noindent\textbf{AI assistance statement.} ~
During the preparation of this work the authors used ChatGPT (OpenAI) to assist with exposition and organization, to check intermediate arguments and references, and to identify inconsistencies in the manuscript. After using this tool, the authors reviewed and edited the content as needed and take full responsibility for the content of the published article.

\bigskip
\noindent\textsc{Mohameden Ahmedou}\par
\noindent Mathematisches Institut, Justus-Liebig-Universit\"at Giessen, Arndtstrasse 2, D-35392 Giessen, Germany\par
\noindent\href{mailto:Mohameden.Ahmedou@math.uni-giessen.de}{\texttt{Mohameden.Ahmedou@math.uni-giessen.de}}

\medskip
\noindent\textsc{Zhongwei Tang}\par
\noindent School of Mathematical Sciences, Laboratory of Mathematics and Complex Systems, Ministry of Education, Beijing Normal University, Beijing 100875, China\par
\noindent\href{mailto:tangzw@bnu.edu.cn}{\texttt{tangzw@bnu.edu.cn}}

\medskip
\noindent\textsc{Heming Wang}\par
\noindent State Key Laboratory of Mathematical Sciences, Academy of Mathematics and Systems Science, Chinese Academy of Sciences, Beijing 100190, China\par
\noindent\href{mailto:hmwang@amss.ac.cn}{\texttt{hmwang@amss.ac.cn}}

\begin{thebibliography}{99}

\bibitem{ABH26}M. Ahmedou, T. Bartsch, and Z. Hu,
Blow-up solutions for the steady state of the Keller--Segel
system on Riemann surfaces,
\emph{Calc. Var. Partial Differential Equations}
\textbf{65} (2026), no.~5, Paper No.~149, 34 pp.

\bibitem{AB2017} M. Ahmedou and M. Ben Ayed, Theory of ``critical points at infinity'' and a resonant singular Liouville-type equation, \emph{Adv. Nonlinear Stud.} \textbf{17} (2017), no.~1, 139--166.

\bibitem{ABL2017} M. Ahmedou, M. Ben Ayed, and M. Lucia, On a resonant mean field type equation: a ``critical point at infinity'' approach, \emph{Discrete Contin. Dyn. Syst.} \textbf{37} (2017), no.~4, 1789--1818.

\bibitem{AHW2025-1} M. Ahmedou, Z. Hu, and H. Wang, A degree-counting formula for a Keller--Segel equation on a surface with boundary, \emph{Commun. Contemp. Math.} (2026), Article 2650039, \href{https://doi.org/10.1142/S0219199726500392}{doi:10.1142/S0219199726500392}. Preprint version: \href{https://arxiv.org/abs/2506.12783v2}{arXiv:2506.12783v2}.

\bibitem{Bahri}A. Bahri, \emph{Critical Points at Infinity in Some Variational Problems}, Pitman Research Notes in Mathematics Series, vol.~182, Longman Scientific \& Technical, Harlow, 1989.

\bibitem{Battaglia19}L. Battaglia, A general existence result for stationary solutions to
the Keller--Segel system, \emph{Discrete Contin. Dyn. Syst.}
\textbf{39} (2019), no.~2, 905--926.

\bibitem{CLMP92}E. Caglioti, P.-L. Lions, C. Marchioro, and M. Pulvirenti, A special class of stationary flows for two-dimensional Euler equations: A statistical mechanics description, \emph{Comm. Math. Phys.} \textbf{143} (1992), no.~3, 501--525.

\bibitem{CLMP95}E. Caglioti, P.-L. Lions, C. Marchioro, and M. Pulvirenti, A special class of stationary flows for two-dimensional Euler equations: A statistical mechanics description. Part II, \emph{Comm. Math. Phys.} \textbf{174} (1995), no.~2, 229--260.

\bibitem{CM}A. Carlotto and A. Malchiodi, Weighted barycentric sets and singular Liouville equations on compact surfaces, \emph{J. Funct. Anal.} \textbf{262} (2012), no.~2, 409--450.

\bibitem{CGY93}S.-Y. A. Chang, M. J. Gursky, and P. C. Yang, The scalar curvature equation on 2- and 3-spheres, \emph{Calc. Var. Partial Differential Equations} \textbf{1} (1993), no.~2, 205--229.

\bibitem{CY87}S.-Y. A. Chang and P. C. Yang, Prescribing Gaussian curvature on $S^2$, \emph{Acta Math.} \textbf{159} (1987), nos.~3--4, 215--259.

\bibitem{CK94}S. Chanillo and M. Kiessling, Rotational symmetry of solutions of some nonlinear problems in statistical mechanics and in geometry, \emph{Comm. Math. Phys.} \textbf{160} (1994), no.~2, 217--238.

\bibitem{CL2002} C.-C. Chen and C.-S. Lin, Sharp estimates for solutions of multi-bubbles in compact Riemann surfaces, \emph{Comm. Pure Appl. Math.} \textbf{55} (2002), no.~6, 728--771.

\bibitem{CL03}C.-C. Chen and C.-S. Lin, Topological degree for a mean field equation on Riemann surfaces, \emph{Comm. Pure Appl. Math.} \textbf{56} (2003), 1667--1727.

\bibitem{CL15}C.-C. Chen and C.-S. Lin, Mean field equation of Liouville type with singular data: Topological degree, 
\emph{Comm. Pure Appl. Math.} \textbf{68} (2015), no.~6, 887--947.

\bibitem{CLW04}C.-C. Chen, C.-S. Lin, and G. Wang, Concentration phenomena of two-vortex solutions in a Chern--Simons model, \emph{Ann. Sc. Norm. Super. Pisa Cl. Sci.} (5) \textbf{3} (2004), no.~2, 367--397.

\bibitem{CEL}G. Ciraolo, P. Esposito, and X. Li, On the classification of solutions to a class of $N$-Liouville equations in $\R^N$, preprint (2026), \href{https://arxiv.org/abs/2604.10050}{arXiv:2604.10050}.

\bibitem{Coste}M. Coste, \emph{Real Algebraic Sets}, lecture notes, March 23, 2005, Sections~1.3--1.4, 1.8, and 3.1.

\bibitem{DPEFM15}M. del Pino, P. Esposito, P. Figueroa, and M. Musso, Nontopological condensates for the self-dual Chern--Simons--Higgs model, \emph{Comm. Pure Appl. Math.} \textbf{68} (2015), no.~7, 1191--1283.

\bibitem{DPEM10}M. del Pino, P. Esposito, and M. Musso, Two-dimensional Euler flows with concentrated vorticities, \emph{Trans. Amer. Math. Soc.} \textbf{362} (2010), no.~12, 6381--6395.

\bibitem{DJLW98}W. Ding, J. Jost, J. Li, and G. Wang, An analysis of the two-vortex case in the Chern--Simons Higgs model, \emph{Calc. Var. Partial Differential Equations} \textbf{7} (1998), no.~1, 87--97.

\bibitem{DHM}G. Dolzmann, N. Hungerb\"uhler, and S. M\"uller, Uniqueness and maximal regularity for nonlinear elliptic systems of $n$-Laplace type with measure valued right hand side, \emph{J. Reine Angew. Math.} \textbf{520} (2000), 1--35.

\bibitem{Dugundji}J. Dugundji, An extension of Tietze's theorem, \emph{Pacific J. Math.} \textbf{1} (1951), 353--367.

\bibitem{Esposito}P. Esposito, Isolated singularities for the $n$-Liouville equation, \emph{Calc. Var. Partial Differential Equations} \textbf{60} (2021), no.~4, Article 137.

\bibitem{EF14}P. Esposito and P. Figueroa, Singular mean field equations on compact Riemann surfaces, \emph{Nonlinear Anal.} \textbf{111} (2014), 33--65.

\bibitem{EM}P. Esposito and F. Morlando, On a quasilinear mean field equation with an exponential nonlinearity, \emph{J. Math. Pures Appl.} \textbf{104} (2015), 354--382.

\bibitem{Hanner}O. Hanner, Some theorems on absolute neighborhood retracts, \emph{Ark. Mat.} \textbf{1} (1951), 389--408.

\bibitem{Hatcher}A. Hatcher, \emph{Algebraic Topology}, Cambridge University Press, Cambridge, 2002.

\bibitem{HL}J. Hor\'ak and M. Lucia, A minimax theorem in the presence of unbounded Palais--Smale sequences, \emph{Israel J. Math.} \textbf{172} (2009), 125--143.

\bibitem{JWY18}A. Jevnikar, J. Wei, and W. Yang,
On the topological degree of the mean field equation with two parameters,
\emph{Indiana Univ. Math. J.}
\textbf{67} (2018), no.~1, 29--88.

\bibitem{Kallel}S. Kallel, Formal barycenter spaces with weights: the Euler characteristic, \emph{Topol. Methods Nonlinear Anal.} \textbf{53} (2019), no.~2, 801--823.

\bibitem{KW74}J. L. Kazdan and F. W. Warner, Curvature functions for compact 2-manifolds, \emph{Ann. of Math.} (2) \textbf{99} (1974), no.~1, 14--47.

\bibitem{KV}S. Kichenassamy and L. V\'eron, Singular solutions of the $p$-Laplace equation, \emph{Math. Ann.} \textbf{275} (1986), 599--615.

\bibitem{Kiessling93}M. K.-H. Kiessling, Statistical mechanics of classical particles with logarithmic interactions, \emph{Comm. Pure Appl. Math.} \textbf{46} (1993), no.~1, 27--56.

\bibitem{L1999} Y. Y. Li, Harnack type inequality: the method of moving planes, \emph{Comm. Math. Phys.} \textbf{200} (1999), no.~2, 421--444.

\bibitem{LW10}C.-S. Lin and C.-L. Wang, Elliptic functions, Green functions and the mean field equations on tori, \emph{Ann. of Math.} (2) \textbf{172} (2010), no.~2, 911--954.

\bibitem{LY13}C.-S. Lin and S. Yan, Existence of bubbling solutions for Chern--Simons model on a torus, \emph{Arch. Ration. Mech. Anal.} \textbf{207} (2013), no.~2, 353--392.

\bibitem{Lucia05}M. Lucia, A deformation lemma for a Moser--Trudinger type functional, \emph{Nonlinear Anal.} \textbf{63} (2005), no.~2, 282--299.

\bibitem{Lucia07}M. Lucia, A deformation lemma with an application to a mean field equation, \emph{Topol. Methods Nonlinear Anal.} \textbf{30} (2007), no.~1, 113--138.

\bibitem{Malchiodi}A. Malchiodi, Morse theory and a scalar field equation on compact surfaces, \emph{Adv. Differential Equations} \textbf{13} (2008), nos.~11--12, 1109--1129.

\bibitem{Malchiodi26}A. Malchiodi, On a degree-counting formula for elliptic equations
of exponential type, \emph{Discrete Contin. Dyn. Syst.}
\textbf{55} (2026), 288--305.

\bibitem{MR11}A. Malchiodi and D. Ruiz, New improved Moser--Trudinger inequalities and singular Liouville equations on compact surfaces, \emph{Geom. Funct. Anal.} \textbf{21} (2011), no.~5, 1196--1217.

\bibitem{Michael}E. Michael, Bi-quotient maps and cartesian products of quotient maps, \emph{Ann. Inst. Fourier (Grenoble)} \textbf{18} (1968), no.~2, 287--302.

\bibitem{MilnorCW}J. Milnor, On spaces having the homotopy type of a CW-complex, \emph{Trans. Amer. Math. Soc.} \textbf{90} (1959), 272--280.

\bibitem{MilnorMorse}J. Milnor, \emph{Morse Theory}, Annals of Mathematics Studies, vol.~51, Princeton University Press, Princeton, NJ, 1963.

\bibitem{Moser}J. Moser, A sharp form of an inequality by N. Trudinger, \emph{Indiana Univ. Math. J.} \textbf{20} (1971), 1077--1092.

\bibitem{NT99}M. Nolasco and G. Tarantello, Double vortex condensates in the Chern--Simons--Higgs theory, \emph{Calc. Var. Partial Differential Equations} \textbf{9} (1999), no.~1, 31--94.

\bibitem{Serrin}J. Serrin, Local behavior of solutions of quasi-linear equations, \emph{Acta Math.} \textbf{111} (1964), 247--302.

\bibitem{Stuart67}J. T. Stuart, On finite amplitude oscillations in laminar mixing layers, \emph{J. Fluid Mech.} \textbf{29} (1967), no.~3, 417--440.
\end{thebibliography}
\end{document}